\newtheorem{theorem}{Theorem}[section]
\newtheorem{lemma}[theorem]{Lemma}
\newtheorem{corollary}[theorem]{Corollary}
\newtheoremstyle{mystyle}
{}
{}
{\normalfont}
{}
{\normalfont\itshape}
{.}
{ }
{}
\theoremstyle{mystyle}
\newtheorem{example}[theorem]{Example}
\newcommand{\cZ}{\mathcal{Z}}
\newcommand{\R}{\mathbb{R}}
\newcommand{\Z}{\mathbb{Z}}
\newcommand{\N}{\mathbb{N}}
\newcommand{\SP}{\textup{\tiny SP}}
\renewcommand{\d}{\text{d}}
\newcommand{\conv}{\operatorname{conv}}
\newcommand{\ie}{i.e., }
\newcommand{\eg}{e.g., }
\newcommand{\embed}{\hookrightarrow}
\renewcommand{\AA}{\mathcal{A}}
\newcommand{\II}{\mathcal{I}}
\providecommand{\keywords}[1]{\textbf{Keywords.} #1}
\title{\bf Parabolic optimal control
	problems with combinatorial switching constraints \\ Part III: Branch-and-bound algorithm\thanks{This work has partially been supported by Deutsche Forschungsgemeinschaft (DFG) under grant nos.~BU~2313/7-1 and ME~3281/10-1.}}
\author{Christoph~Buchheim, Alexandra~Gr\"utering, and Christian~Meyer\footnote{\{christoph.buchheim,alexandra.gruetering,christian.meyer\}@math.tu-dortmund.de}}
\date{\vspace{-5ex}}
\affil{Department of Mathematics, TU Dortmund University, Germany}
\tikzset{
	treenode/.style = {align=center, inner sep=0pt, text centered,
		font=\sffamily},
	arn/.style = {treenode, circle, black, font=\sffamily, draw=black,
		fill=white, text width=3ex},
	arnbold/.style = {treenode, circle, line width=0.3mm, font=\sffamily\bfseries, draw=black,
		fill=white, text width=3ex},
}
\begin{document}
\maketitle
\begin{abstract}
  We present a branch-and-bound algorithm for globally solving
  parabolic optimal control problems with binary switches that have
  bounded variation and possibly need to satisfy further combinatorial
  constraints. More precisely, for a given
  tolerance~\mbox{$\varepsilon>0$}, we show how to compute in finite
  time an $\varepsilon$-optimal solution in function space,
  independently of any prior discretization. The main
  ingredients in our approach are an appropriate branching strategy in
  infinite dimension, an a posteriori error estimation in order to
  obtain safe dual bounds, and an adaptive refinement strategy in
  order to allow arbitrary switching points in the limit. The
  performance of our approach is demonstrated by extensive
  experimental results.

  \medskip
  
  \keywords{PDE-constrained optimization, switching time optimization,
    global optimization, branch-and-bound}
\end{abstract}

\section{Introduction}    

Optimal control problems with discrete switches have recently become
an increasing focus of research. Most approaches presented in the
literature, however, produce only heuristic solutions without
any quality guarantee. The
well-known Sum-Up Rounding approach~\cite{SA12,KLM20} computes binary
switching patterns by first solving a convex relaxation of the problem
and then approximating the resulting continuous switching by a binary
one. This approach often requires a large number of
switchings when trying to come close to the optimal continuous
solution. In particular, it cannot deal with an explicit
bound on the number of switchings, let alone with more complex
combinatorial constraints. If such constraints need to be satisfied,
the Combinatorial Integral Approximation approach~\cite{SA05} can be
applied. Since the latter again tries to approximate a given continuous
control by a feasible binary one, this approach does not lead to optimal
solutions to the original problem in general, even when a best-possible
approximation can be computed. Other approaches aim at optimizing the switching times of the discrete switches 
\cite{GER06,EWA06,JM11,FMO13,HR16,ROL17,SOG17},
or use non-smooth penalty techniques, partly in combination with 
convexification, to impose the switching structure, see, \eg \cite{CK14,CK16,CTW18,CKK18,Wac19} and the references therein. However, both strategies in general lead to non-convex problems with potentially multiple local minima and a convexifcation of the arising problems may destroy the switching structure of the optimal solution.

In this paper, we present a branch-and-bound approach for solving
parabolic optimal control problems with combinatorial switching
constraints to \emph{global} optimality. More precisely, we consider problems
of the form
\begin{equation}\tag{P}\label{eq:optprob}
\left\{\quad
\begin{aligned}
\text{min} \quad & J(y,u) = \tfrac{1}{2}\, \|y - y_{\textup{d}}\|_{L^2(Q)}^2 + \tfrac{\alpha}{2}\,\|u-\tfrac 12\|^2_{L^2(0,T)}\\
\text{s.t.} \quad & 
\begin{aligned}[t]
\partial_t y(t,x) - \Delta y(t,x) &= u(t) \,\psi(x) & & \text{in } Q := \Omega \times (0,T),\\
y(t,x) &= 0 & & \text{on } \Gamma := \partial\Omega \times (0,T),\\  
y(0,x) &= y_0(x) & & \text{in } \Omega,
\end{aligned}\\
\text{and} \quad & u \in D\;.
\end{aligned}
\quad \right.
\end{equation}
Here
$T > 0$ is a given final time and $\Omega\subset \R^d$, $d\in \N$, is
a bounded domain, \ie a bounded, open, and connected set, with
Lipschitz boundary~$\partial \Omega$ in the sense of
\cite[Def.~1.2.2.1]{GRIS85}. The form function $\psi\in
H^{-1}(\Omega)$ and the initial state~$y_0 \in L^2(\Omega)$
are given. Moreover,~$y_{\textup{d}} \in L^2(Q)$ is a given desired
state and $\alpha \geq 0$ is a Tikhonov parameter weighting the
deviation from~$\tfrac 12$. Finally,
\[
D \subseteq BV(0,T;\{0,1\}):=\big\{ u \in BV(0,T)\colon u(t) \in \{0,1\} \text{ f.a.a.\ } t \in (0,T)\big\}
\]
denotes the set of feasible \emph{switching controls} and is supposed to satisfy the following assumptions: 
\begin{align}
& \text{$D$ is a bounded set in $BV(0,T)$,} \tag{D1}\label{eq:D1} \\
& \text{$D$ is closed in $L^p(0,T)$ for some fixed $p \in [1,\infty)$.} \tag{D2}\label{eq:D2}
\end{align}
Here $BV(0,T)$ denotes the set of all functions in~$L^1(0,T)$ with bounded variation,
equipped with the norm
$\|u\|_{BV(0,T)} := \|u\|_{L^1(0,T)} + |u|_{BV(0,T)}$; see \eg \cite{ATT14} for details on the space of functions with bounded variation.
For simplicity, we restrict ourselves to the case of one binary
switch in~\eqref{eq:optprob}, but our main results are easily extended
to the case of multiple switches, i.e., to~$D\subset
BV(0,T;\{0,1\}^k)$ for some~$k\in\N$.

Assumption~\eqref{eq:D1} is crucial in our context. Without this
condition, it would be possible to approximate any control~$u$
with~$u(t)\in[0,1]\text{ f.a.a.\ } t \in (0,T)$ arbitrarily well by a
binary switch~$u$ using an increasing number of switchings, e.g., by
applying the Sum-Up Rounding approach mentioned above. Moreover, in our
branch-and-bound algorithm, the bounded variation is essential in
order to ensure the effectiveness of fixings.

In \cite{partI,partII}, we present tailored convexifications of
\eqref{eq:optprob} and an outer approximation approach to solve the
resulting relaxations. The core of the approach is the generation of
linear cutting planes describing the closed convex hull of~$D$
in~$L^p(0,T)$. While the overall approach is very general, the
specific shape of the cutting planes is problem-dependent. In
particular, we devise results for the case of bounded total variation
(without further constraints) and for the case where the switching points of the control~$u$ must satisfy given linear constraints. The latter case comprises the so-called mininum dwell-time contraints~\cite{ZRS20}.

Building on the results of~\cite{partI,partII}, our aim is thus to
determine \emph{globally} optimal solutions for problems of
type~\eqref{eq:optprob} that are independent of any prior discretization, using a branch-and-bound approach. For this,
we start from the convex relaxations of~\eqref{eq:optprob} studied
in~\cite{partI}. These convex relaxations correspond to the
root nodes in our branch-and-bound algorithm. In order to extend this
to a full branch-and-bound algorithm for computing globally optimal
solutions (at least in the limit), we have to overcome several
obstacles:
\begin{itemize}
\item[--] Since we optimize in function space, fixing the value of the
  switch in finitely many points (as is common in finite-dimensional
  branch-and-bound algorithms) has no direct effect, or is not even
  well-defined. We thus have to take the bounded variation into
  account in order to obtain implicit restrictions on the set of admissible
  controls in the nodes of the branch-and-bound tree; see
  Section~\ref{sec:branching}.
\item[--] The fixing of the switch at certain points in time leads to
  a non-closed set of admissible controls in the nodes. Moreover, the
  closed convex hulls of these sets are structurally different from
  the admissible controls arising in the root node. We study the most important
  classes of these sets in Section~\ref{sec:convhull}.
\item[--] In~\cite{partII}, we devised a semi-smooth Newton
  method to solve the root relaxation. However, with an increasing
  number of fixings, this method became less stable, so that we now
  propose to solve all subproblems by the alternating direction method
  of multipliers; see Section~\ref{sec:comp}.
\item[--] In order to obtain globally optimal solutions, all dual
  bounds computed in the nodes of the branch-and-bound tree must be
  safe. In particular, they need to take discretization errors into
  account. In case the time-mesh independent dual bound is too weak
  to cut off a node, we may either have to branch or to refine the
  temporal grid, depending on the relation between the current primal
  bound and the time-mesh dependent dual bound. The sophisticated
  interplay between branching, error analysis, and adaptive refinement
  is at the core of our proposed approach, it is discussed in
  Section~\ref{sec:numeric}.
\end{itemize}
The main contribution of this paper is to present solutions for the challenges listed above. An extensive experimental evaluation presented in Section~\ref{sec:exp} shows that an effective and stable implementation of the resulting branch-and-bound approach is possible.

\section{Preliminaries}\label{sec:pre}

We first collect some definitions and observations that are needed in the following sections, concerning the solution mapping for the PDE in~\eqref{eq:optprob} as well as functions of bounded variation and special switching constraints.

\subsection{Solution mapping}\label{sec:solmap}

The assumptions on the optimal control problem~\eqref{eq:optprob}
listed above guarantee that, for every control~$u\in D\subset
L^2(0,T)$, the PDE in~\eqref{eq:optprob} admits a unique weak solution
$$y\in W(0,T)
:= H^1(0,T;H^{-1}(\Omega)) \cap L^2(0,T; H^1_0(\Omega))\;;$$
see~\cite[Chapter~3]{Troe10}. To specify the associated solution
operator
\[
S\colon L^2(0,T)\ni u \mapsto y \in W(0,T)\;,
\]
we introduce
the linear and continuous (and thus Fr\'echet differentiable) operator
\[
\Psi\colon L^2(0,T) \to L^2(0,T;H^{-1}(\Omega)), \quad 
(\Psi u) (t,x) = u(t) \psi(x)
\] 
as well as the solution operator $\Sigma : L^2(0,T;H^{-1}(\Omega)) \to
W(0,T)$ of the heat equation with homogeneous initial condition, \ie
given $w\in L^2(0,T;H^{-1}(\Omega))$, $y = \Sigma(w)$ solves
\[
\partial_t y - \Delta y = w \quad \text{in } L^2(0,T;H^{-1}(\Omega)), 
\quad y(0) = 0 \quad\text{in } L^2(\Omega).
\]
Moreover, we introduce the function $\zeta \in W(0,T)$ as solution for
\[
\partial_t \zeta - \Delta \zeta = 0 \quad \text{in } L^2(0,T;H^{-1}(\Omega)), 
\quad \zeta(0) = y_0 \quad\text{in } L^2(\Omega).
\]
Then the solution operator~$S$ is given
by $ S = \Sigma \circ \Psi + \zeta$. In particular, it is affine and
continuous. Using this solution operator, the
problem~\eqref{eq:optprob} can be written as
\begin{equation} \tag{P\text{$'$}} \label{eq:P'}
\left\{\quad
\begin{aligned}
\mbox{min }~ &J(Su,u) \\ 
\mbox{s.t. }~
& u\in D\;.
\end{aligned}\, \right.
\end{equation}

\subsection{Functions of bounded variation}\label{sec:bv}

Functions of bounded variation are of central importance in the
following. In order to deal with such functions, first recall that
each function $u \in BV(0,T)$ admits a right-continuous representative
given by $\hat u(t) = c+\mu((0,t])$, $t\in (0,T)$, where $\mu$ is the
regular Borel measure on~$[0,T]$ associated with the distributional
derivative of $u$ and $c\in \R$ is a constant. Note that~$\hat u$ is
unique on~$(0,T)$.  Here and in the following, with a slight abuse
of notation, we denote this function by the same symbol as the
equivalence class in $BV(0,T)$ and, when it comes to pointwise
evaluations, we always refer to this representative function.
In particular, we will often write constraints in the form~$u(t)=b$
for~$b\in\R$. For~$t\in(0,T)$, this is well-defined by the above
reasoning, it then means~$\hat u(t)=b$, while for~$t=0$, we use the
same notation as shorthand for~$\lim_{t\searrow 0}\hat u(t)=b$.

In this paper, we will mostly deal with binary controls~$u\in
BV(0,T;\{0,1\})$. In this case, the representative~$\hat u$ can be
parameterized through its switching points~$0\leq
t_1\leq \cdots \leq t_\sigma$, where~$\sigma\le\|u\|_{BV(0,T)}$. More formally, if
one already counts $\hat u(0)=\lim_{t\searrow 0} \hat{u}(t)=1$ as one switching
from $0$~to $1$, then the representative can be written in the form
$$u_{t_1,\dots,t_\sigma}(t):=
\begin{cases}
0, & \text{if \,$|\{i \in \{1, \ldots, \sigma\} \colon \, t_i \le t \}|$ is even},\\
1 , & \text{if\, $|\{i \in \{1, \ldots, \sigma\} \colon \, t_i \le t \}|$ is odd};
\end{cases}                
$$ see~\cite{partI} for more details. In the following, we will always regard $u_{t_1,\dots,t_\sigma}(t)$ as a function in~$BV(0,T)$.

\subsection{Examples of switching constraints}\label{sec:switchingcon}

In \cite{partI,partII}, we present tailored convexifications of
\eqref{eq:P'} and an outer approximation algorithm to solve the
resulting relaxations. In particular, we elaborate the details of this
approach for the following two relevant classes of constraints~$D$. By
Assumption~\eqref{eq:D1}, the total number of switchings is bounded,
and the first type of constraint arises when this is the only
restriction. More specifically, we restrict the total variation of the
single switch from above by $\sigma>0$, so that the set of feasible
controls is
\begin{equation}\label{eq:dmax}
  D(\sigma):=\{u\in BV(0,T):u(t)\in \{0,1\} \text{ f.a.a. }t\in
  (0,T),\ |u|_{BV(0,T)}\leq \sigma \}.
\end{equation}
The second type of constraint imposes affine linear relations between
the positions of the switching points of~$u$.
More precisely, for a given polytope~$P\subseteq \R_+^\sigma$, we define
\begin{equation}\label{eq:DP}
  D(P) := \{ u_{t_1,\dots,t_\sigma} \colon (t_1, \ldots, t_\sigma) \in P,~
  0\leq t_1\leq \cdots\leq t_\sigma < \infty\}\;.
\end{equation}
An important special case are the so-called minimum-dwell time
constraints, defined as
\begin{equation}\label{eq:Ds} 
  D(s) := \big\{ u_{t_1,\dots,t_\sigma}\colon t_{i}-t_{i-1}\ge s ~ \forall \,i = 2, \dots,\sigma,~ t_1,\ldots,t_\sigma\geq 0\big\}
\end{equation}
for some given~$s>0$. In words, a minimum time span~$s$ is required between
two consecutive switchings of~$u$.

All sets~$D(\sigma)$ and $D(P)$ defined here, and hence also~$D(s)$, satisfy the general assumptions~\eqref{eq:D1} and~\eqref{eq:D2}; see~\cite[Lemma~3.7 and Lemma~3.10]{partI}. We will investigate these sets under fixings in Section~\ref{sec:convhull} and use~$D(\sigma)$ for our experiments presented in Section~\ref{sec:exp}.

\section{Branch-and-bound algorithm}\label{sec:branching}

In finite-dimensional optimization,
branch-and-bound is the standard approach for solving non-convex
optimization problems to global optimality. First, a dual bound is
computed for the original problem, corresponding to the \emph{root
node} of the branch-and-bound tree. Often, this is done by solving a
convex relaxation of the problem. In case the optimal solution for the
latter is infeasible for the original problem, a \emph{branching} is
applied. In the most abstract form, this means that the set of
feasible solutions is subdivided into two (or more) subsets,
corresponding to the child nodes of the root node. Recursive
application of the branching leads to the so-called
\emph{branch-and-bound tree}. The \emph{bounding} is now applied in
order to reduce the number of nodes in this tree, which leads to a
finite algorithm in many cases: one first needs to obtain so-called
\emph{primal solutions}, i.e., feasible solutions of the original
problem. Each such solution yields (in case of a minimization problem)
a global upper bound on the optimal value of the original
problem. Now, if the dual bound obtained in some
branch-and-bound node is larger than the best known upper bound, it
follows that this node cannot contain any optimal solution, so that
it can be \emph{pruned}, i.e., the entire subtree
rooted at this node can be ignored in the enumeration.

The most natural branching strategy for finite-dimensional binary
optimization problems consists of picking a binary variable having a
fractional value in the optimal solution for the convex relaxation used
for computing the dual bound, and then fixing this variable to zero in
the first child node and to one in the other. However, in the
infinite-dimensional setting considered here, the situation is more
complicated: we need to deal with infinitely many binary variables,
suggesting that an infinite number of function values has to be fixed
in order to uniquely determine a solution for~\eqref{eq:optprob}. In
fact, fixing a pointwise value of~$u$ has no direct effect (or is not
even well-defined) in the function space~$L^p(0,T)$. At this point, we
can exploit Assumption~\eqref{eq:D1}, which yields a finite bound on
the total number of switching points. The relevant restrictions in a
given node of the branch-and-bound tree are now a joint consequence of
the finitely many fixing decisions taken so far and of the
constraint~$u\in D$.

The main challenge is now to describe these resulting
restrictions. Assume that our branching strategy always picks
appropriate time points~$\tau\in(0,T)$ and fixes~$u(\tau)=0$ in the
first subproblem and~$u(\tau)=1$ in the second. Then
all our subproblems, corresponding to the nodes in the
branch-and-bound tree, are problems in $BV(0,T)$ of the form
\begin{equation} \tag{SP} \label{eq:SP}
\left\{\quad
\begin{aligned}
\mbox{inf }~ &J(Su,u)\\ 
\mbox{s.t.}~
& u\in D \\
& u(\tau_j) = c_j \quad \forall j=1,\ldots,N
\end{aligned}\, \right.
\end{equation}
with $(\tau_j,c_j) \in [0,T)\times\{0,1\}$ for $1\leq j \leq N$; see Section~\ref{sec:bv} for the precise meaning of the fixing constraints. In the
following, we denote the feasible set of~\eqref{eq:SP} by
$$D_\SP:=\{u\in D: u(\tau_j) = c_j \ \forall j=1,\ldots,N\}\;,$$
where we
always assume $\tau_1<\cdots<\tau_N$. Note that the set~$D_\SP$ is not
closed in general, and hence the subproblem~\eqref{eq:SP} does not
necessarily admit a global minimizer. However, this is no problem
since we are only interested in the optimal value of~\eqref{eq:SP} in
our branch-and-bound framework. In fact, our approach will produce a series of dual bounds
by convexifying~\eqref{eq:SP} and these covexifications will provide the same (primal) optimal value
of~\eqref{eq:SP} in the limit; see Theorem~\ref{thm:convobj} below. We we consider the convexification
\begin{equation} \tag{SPC} \label{eq:SPC}
\left\{\quad
\begin{aligned}
\mbox{inf }~ &J(Su,u)\\ 
\mbox{s.t.}~
& u\in \overline\conv(D_\SP)
\end{aligned}\, \right.
\end{equation}
of the suproblem~\eqref{eq:SP} in $L^p(0,T)$. Here and in the following, $\overline\conv$ always denotes the closed convex hull in~$L^p(0,T)$.

In a reasonable branching strategy, one may expect that an increasing
number of fixing decisions, taken along a path in the branch-and-bound
tree starting at the root node, leads to a unique solution in the
limit. In particular, the dual bounds obtained in the nodes and the
optimal values subject to the corresponding fixings should converge to
each other. The next result shows that this is guaranteed in our
infinite-dimensional setting if the fixing positions are sufficiently
well-distributed.
\begin{theorem}\label{thm:convobj}
  For~$N\in\N$, let
  $0\le\tau_1^{N}<\dots<\tau_N^N< T$
  and~$c_1^{N},\dots,c_N^N\in\{0,1\}$. Define
  $$\Delta \tau^{N}:=
  \max_{j=1,\ldots,N+1}|\tau_j^{N}-\tau_{j-1}^{N}|\;,$$
  where~$\tau_0^N:=0$ and~$\tau_{N+1}^N:=T$.
  If $\Delta \tau^{N} \to 0$
  for~$N\to\infty$, then
  \begin{itemize}
  \item[(i)] the diameters of the feasible sets of~\eqref{eq:SPC}
    and~\eqref{eq:SP} in~$L^2(0,T)$ vanish and
  \item[(ii)] the optimal values of~\eqref{eq:SPC}
    and~\eqref{eq:SP} converge to each other.
  \end{itemize}
\end{theorem}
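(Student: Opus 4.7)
The plan is to establish~(i) via a counting argument based on the bounded-variation hypothesis~\eqref{eq:D1}, and then to deduce~(ii) from~(i) together with the continuity of the reduced objective $u\mapsto J(Su,u)$ on $L^2(0,T)$.

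\textbf{Step 1 (proof of (i)).} Let $M>0$ be a total-variation bound for $D$ according to~\eqref{eq:D1}, and fix $u_1,u_2\in D_\SP$. Their difference $v:=u_1-u_2$ lies in $BV(0,T)$ with $|v|_{BV(0,T)}\le 2M$, takes values in $\{-1,0,1\}$ almost everywhere, and its right-continuous representative vanishes at each fixing point $\tau_j^N$. On any subinterval $I_j=[\tau_j^N,\tau_{j+1}^N)$ (with $1\le j\le N-1$) that contains no switching point of $u_1$ or $u_2$, both representatives are constant with common right-limit $c_j^N$ at $\tau_j^N$, so $v\equiv 0$ on $I_j$; an analogous argument, exploiting that $v$ can only be nonzero on an ``unanchored'' boundary piece if a switching occurs there, handles $I_0=[0,\tau_1^N)$ and $I_N=[\tau_N^N,T)$. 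Since the combined number of switching points of $u_1$ and $u_2$ is bounded by $2M$, at most $2M$ intervals $I_j$ can contribute, each of length $\le\Delta\tau^N$, whence
\[
\|u_1-u_2\|_{L^2(0,T)}^2 \;\le\; 2M\,\Delta\tau^N\;\xrightarrow{N\to\infty}\; 0.
\]
To extend this to $\overline{\conv}(D_\SP)$, I would observe that taking finite convex combinations cannot enlarge the $L^2$-diameter, and that convergence in $L^p(0,T)$ together with the uniform bound $\|\cdot\|_{L^\infty}\le 1$ (inherited under convex combinations) yields convergence in $L^2(0,T)$ via interpolation; hence the $L^2$-diameter of the closure in $L^p$ coincides with that of $D_\SP$.

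\textbf{Step 2 (proof of (ii)).} The inclusion $D_\SP\subseteq\overline{\conv}(D_\SP)$ immediately yields that the optimal value of~\eqref{eq:SPC} is at most that of~\eqref{eq:SP}. For the reverse inequality, assume $D_\SP\neq\emptyset$ (otherwise both infima are $+\infty$) and, for each $N$, pick any $u^N\in D_\SP$ and any $\varepsilon_N$-near-optimizer $v^N\in\overline{\conv}(D_\SP)$ of~\eqref{eq:SPC} with $\varepsilon_N\to 0$. By~(i), $\|u^N-v^N\|_{L^2(0,T)}\to 0$. Since $S=\Sigma\circ\Psi+\zeta$ is affine and continuous from $L^2(0,T)$ into $W(0,T)\hookrightarrow L^2(Q)$, the reduced objective $u\mapsto J(Su,u)$ is continuous on $L^2(0,T)$; hence $J(Su^N,u^N)-J(Sv^N,v^N)\to 0$, and the two optimal values coincide in the limit.

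\textbf{Main obstacle.} The technically delicate part is the counting step in~(i): one must argue carefully through the right-continuous representative of $v=u_1-u_2$ to conclude that a nonzero contribution on $I_j$ forces a switching of $u_1$ or $u_2$ inside $I_j$, and one has to separately handle the two boundary pieces $I_0$ and $I_N$, which are anchored by a fixing on only one side. The remaining ingredients---invariance of the $L^2$-diameter under convex combinations and under $L^p$-closure (thanks to the $L^\infty$-bound), and the $L^2$-continuity of the reduced objective---are routine.
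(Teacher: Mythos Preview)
Your proposal is correct and follows essentially the same route as the paper. The paper's counting argument is marginally sharper (it observes that each interior interval on which $u_1$ and $u_2$ differ must contain at least two switchings of $u_1$ and $u_2$ combined, yielding the bound $(\sigma+2)\Delta\tau^N$ rather than your $2M\,\Delta\tau^N$), and for~(ii) it carries out an explicit Lipschitz estimate for $u\mapsto J(Su,u)$ instead of invoking continuity abstractly; conversely, you are more careful than the paper about the fact that the closure is taken in $L^p$ rather than $L^2$, handling this via the uniform $L^\infty$-bound and interpolation. These are cosmetic differences only.
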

\begin{proof}
  Let $D_\SP^{N}:=\{u\in D: u(\tau_j^{N}) = c_j^{N} \ \forall
  j=1,\ldots,N\}$ denote the feasible set of~\eqref{eq:SP}
  for~$N\in\N$.  Without loss of generality, we may assume
  $D_\SP^N\neq \emptyset$ for $N\in \N$, since otherwise the feasible
  set of \eqref{eq:SPC} is also empty and thus both optimal values
  agree. We first claim that two controls~$u_1,u_2\in D_\SP^{N}$ can
  only differ in at most~$\sigma$ of the
  intervals~$(\tau_{j-1}^{N},\tau_j^{N})$ for~$2\leq j\leq N$, where
  $\sigma$ denotes the upper bound on the total number of switchings
  guaranteed by Assumption~\eqref{eq:D1}. Indeed, assume that~$u_1$
  and~$u_2$ differ between~$\tau_{j-1}^{N}$ and~$\tau_j^{N}$. Since
  the values of~$u_1$ and~$u_2$ agree at~$\tau_{j-1}^{N}$ and
  $\tau_j^{N}$, either one of the two functions has to switch at least
  twice in~$(\tau_{j-1}^{N},\tau_j^{N})$, if $c_{j-1}^{N}=c_j^{N}$, or
  both functions have to switch at least once, if $c_{j-1}^{N}\neq
  c_j^{N}$. Hence, for each interval where~$u_1$ and~$u_2$ differ,
  both functions together have at least two switchings, but the total
  number of their switchings is bounded by~$2\sigma$.

  Taking into account also the intervals $(0,\tau_1^{N})$
  and~$(\tau_N^{N},T)$ and using that~$u_1,u_2\in [0,1]$
  a.e.\ in~$(0,T)$, we thus obtain
  $$\sup_{u_1,u_2\in D_\SP^{N}} \|u_1-u_2\|^2_{L^2(0,T)} \leq (\sigma+2)\Delta\tau^{N}$$
  and consequently, for~$N\to \infty$, we get
  \begin{equation}\label{eq:vanish}
    \sup_{u_1,u_2\in \overline\conv(D_\SP^{N})} \|u_1-u_2\|_{L^2(0,T)} =
    \sup_{u_1,u_2\in D_\SP^{N}} \|u_1-u_2\|_{L^2(0,T)}
    \to 0\;,
  \end{equation}
  which shows assertion~(i).
  
We now show that the difference $|J(Su_1,u_1)-J(Su_2,u_2)|$ in the objective function vanishes if the difference of the control vanishes. For that, we have a closer look at the solution mapping $S\colon u \mapsto y$ in~\eqref{eq:SP} given by $ S = \Sigma \circ \Psi + \zeta$; see Section~\ref{sec:solmap}. It is well known, see, \eg \cite{Evans_PDE}, that the solution $y=\Sigma w$ satisfies
$$\max_{t\in (0,T)} \|y(t)\|_{L^2(\Omega)} + \|y\|_{L^2(0,T;H_0^1(\Omega))} + \|\partial_t y\|_{L^2(0,T;H^{-1}(\Omega))}\leq C_1 \|w\|_{L^2(0,T;H^{-1}(\Omega))}$$
with a constant~$C_1>0$. For $u_1,u_2\in L^2(0,T)$ we thus obtain 
$$\begin{aligned}
\|Su_1-Su_2\|_{L^2(Q)} = \|\Sigma \Psi u_1 -\Sigma \Psi u_2\|_{L^2(Q)} &\leq \|\Sigma \Psi(u_1 -u_2)\|_{L^2(0,T;H_0^1(\Omega))} \\
&\leq  C_1\|\Psi(u_1 -u_2)\|_{L^2(0,T;H^{-1}(\Omega))} \\
&\leq C_1  \|\psi\|_{H^{-1}(\Omega)}\|u_1-u_2\|_{L^2(0,T)}\;,
\end{aligned}
$$
and hence
$$\begin{aligned}
&|J(Su_1,u_1)-J(Su_2,u_2)|\\&\quad=\tfrac{1}{2}\,\left|\|Su_1 - y_{\textup{d}}\|_{L^2(Q)}^2 -\|Su_2 - y_{\textup{d}}\|_{L^2(Q)}^2  + \alpha\,\|u_1-\tfrac 12\|^2_{L^2(0,T)}- \alpha\,\|u_2-\tfrac 12\|^2_{L^2(0,T)}\right|\\
  &\quad\leq\tfrac{1}{2}\,\left(\|Su_1 -Su_2\|_{L^2(Q)}^2+\alpha \|u_1-u_2\|^2_{L^2(0,T)}\right)\\
  &\quad\leq C_2 \|u_1-u_2\|^2_{L^2(0,T)}
\end{aligned}$$
for some constant~$C_2>0$. Together with~\eqref{eq:vanish}, this
implies that the maximal difference of all objective values of
feasible controls in~\eqref{eq:SPC} vanishes
for~$N\to\infty$. Since~\eqref{eq:SPC} is a relaxation
of~\eqref{eq:SP}, we obtain~(ii).
\end{proof}

As a consequence of Theorem~\ref{thm:convobj} and its proof, we immediately obtain the following.
\begin{corollary}
  For each~$\varepsilon>0$ there exist~$N\in\N$ and
  fixings~$(\tau_j,c_j) \in
  (0,T)\times\{0,1\}$, $j=1,\ldots,N$, such that the optimal value of
  $\eqref{eq:SPC}$ differs by at
  most~$\varepsilon$ from the optimal value of the original
  problem~\eqref{eq:optprob}.
\end{corollary}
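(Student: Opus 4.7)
The plan is to pick the fixings so that they are consistent with a near-optimal feasible control of~\eqref{eq:optprob} and then to invoke Theorem~\ref{thm:convobj} to close the gap. Write $v^*$ for the optimal value of~\eqref{eq:optprob}. For a given $\varepsilon>0$, first select $u^*\in D$ with $J(Su^*,u^*)\le v^*+\varepsilon/2$, which exists by definition of the infimum. By Assumption~\eqref{eq:D1}, the right-continuous representative $\hat u^*$ has only finitely many switching points in $(0,T)$.

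Next, choose $N\in\N$ and points $0<\tau_1<\cdots<\tau_N<T$ satisfying two conditions: (a)~the mesh width $\Delta\tau^N$ is small enough that the bound $C_2(\sigma+2)\Delta\tau^N$ derived in the proof of Theorem~\ref{thm:convobj} is at most $\varepsilon/2$, and (b)~no $\tau_j$ coincides with a switching point of~$\hat u^*$. Setting $c_j:=u^*(\tau_j)\in\{0,1\}$ (well-defined by~(b)) then makes $u^*$ feasible for~\eqref{eq:SP}, so the optimal value of~\eqref{eq:SP} lies between $v^*$ (using that~\eqref{eq:SP} is a restriction of~\eqref{eq:optprob}) and $J(Su^*,u^*)\le v^*+\varepsilon/2$. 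Combined with the $\varepsilon/2$-bound between the optimal values of~\eqref{eq:SP} and~\eqref{eq:SPC} guaranteed by~(a) via Theorem~\ref{thm:convobj}, the triangle inequality produces the desired $\varepsilon$-bound between the optimal values of~\eqref{eq:SPC} and~\eqref{eq:optprob}.

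The only mildly non-routine step is arranging (a) and (b) simultaneously, but since the switching set of $\hat u^*$ is a finite subset of $(0,T)$ and therefore has empty interior, any equidistant grid satisfying~(a) can be perturbed by an arbitrarily small amount to also satisfy~(b) without violating the mesh-width condition. The remaining estimates are a direct bookkeeping of three inequalities, so no genuine obstacle arises beyond invoking Theorem~\ref{thm:convobj}.
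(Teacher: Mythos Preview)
Your argument is correct and matches the paper's intended reasoning: the paper states the corollary as an immediate consequence of Theorem~\ref{thm:convobj} and its proof, and the way one makes this ``immediate'' is precisely by choosing the fixings consistent with a near-optimal control of~\eqref{eq:optprob} and then invoking the quantitative diameter bound $C_2(\sigma+2)\Delta\tau^N$ established there. One small simplification: condition~(b) is not actually needed, since the pointwise value $u^*(\tau_j)$ is always well-defined via the right-continuous representative (see Section~\ref{sec:bv}), and setting $c_j:=u^*(\tau_j)$ then makes $u^*\in D_\SP$ regardless of whether $\tau_j$ happens to be a switching point.
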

In other words, up to an arbitrary desired precision~$\varepsilon>0$, the optimal solution
of~\eqref{eq:optprob} can be approximated by \eqref{eq:SPC} using a finite number of
fixings. This is crucial for the branch-and-bound algorithm we are
going to present in the following. Clearly, the number of necessary
fixings depends on~$\varepsilon$.

To solve the subproblems in the branch-and-bound
algorithm, we will use the outer approximation approach presented in \cite{partI,partII}. For this purpose, we need to discuss how to deal with the
resulting projections under fixings (see Section~\ref{sec:convhull})
and how to adapt the outer approximation algorithm (see
Section~\ref{sec:comp}). For both tasks, first note that
$$\overline\conv(D_\SP)=\overline\conv(\overline{D_\SP})\;,$$ again
with all closures taken in $L^p(0,T)$. Problem~\eqref{eq:SPC} is thus
very similar to the problem without fixings addressed
in~\cite{partI,partII}, except that~$D$ is now replaced by the more
complex set~$\overline{D_\SP}$.  In an outer approximation approach,
the impact of the fixings is then implicitly modeled by the cutting
planes describing~$\overline\conv(D_\SP)$.

However, the fixings may also directly determine significant parts of
the switching pattern in such a way that $u$ must be constant on some
intervals $[\tau_{j-1},\tau_j)$, \ie $u|_{[\tau_{j-1},\tau_j)}\equiv
    c_{j-1}$ for all controls~$u\in D_\SP$. Indeed, as shown by the
    proof of Theorem~\ref{thm:convobj}, the non-fixed part of the time
    horizon vanishes under the assumptions of
    Theorem~\ref{thm:convobj} when~$N\to\infty$. In our
    branch-and-bound algorithm, it is much more efficient to deal with
    these constraints explicity, instead of modeling them by cutting
    planes describing~$\overline\conv(D_\SP)$.
    
Finally, note that it is also possible that
the given fixings are inconsistent with the constraint~$D$, i.e., that the
feasible set of~\eqref{eq:SP} is empty, which is easy to detect for
most choices of~$D$. In this case, the subproblem is infeasible and
the corresponding node in the branch-and-bound tree can be pruned.

\begin{example}\label{ex:fix1}
  Consider the set~$D(\sigma)$ defined
  in~\eqref{eq:dmax}. Let~$N'=|\{j\in\{2,\dots,N\}\colon c_{j-1}\neq
  c_{j}\}|$. If~$N'>\sigma$, we have~$D(\sigma)_\SP=\emptyset$, since
  even the number of switchings enforced by the fixing is too large
  for a feasible solution. The subproblem can thus be pruned.
  If~$\sigma-1\le N'\le\sigma$, we can fix all
  intervals~$[\tau_{j-1},\tau_j)$ with~$c_{j-1}=c_j$ to the
    value~$c_{j-1}$, since any other value in this interval would
    increase the number of switchings by two.  If~$\sigma-1\le
    N'\le\sigma$ and~$c_{j-1}\neq c_j$, then no value of~$u$
    in~$(\tau_{j-1},\tau_j)$ is fixed, but~$u$ has to be monotone
      in~$[\tau_{j-1},\tau_j]$, which is modeled implicitly by cutting
        planes. The same is true for all further restrictions resulting from the
        fixings.  \qed
\end{example}
\begin{example}\label{ex:fix2}
  For the minimum dwell time constraints $D(s)$ defined
  in~\eqref{eq:Ds}, we can fix an interval~$[\tau_{j-1},\tau_j)$
    with~$c_{j-1}=c_j$ to the value~$c_{j-1}$ if and only if
    $\tau_j-\tau_{j-1}\leq s$. Otherwise, no direct fixing is
    possible, but the number of allowed switchings within the
    interval~$(\tau_{j-1},\tau_j)$ reduces to $\lceil
      \nicefrac{\tau_j-\tau_{j-1}}{s}\rceil$. An infeasible subproblem
      arises whenever~$u$ is fixed to the same value at two time
      points having a distance of at most~$s$, but fixed to the other
      value at some point in between. \qed
\end{example}

\section{Convex hull under fixings}\label{sec:convhull}

As already indicated, our aim is to fully describe the convex hull of
feasible switching patterns, i.e., the feasible set of~\eqref{eq:SPC},
by cutting planes derived from finite-dimensional projections,
extending the approach proposed in~\cite{partI} for the case without
fixings. For this, we project the set $\overline{D_\SP}$ to the
finite-dimensional space $\R^M$, by means of local averaging
\begin{equation}\label{eq:Pi}
\Pi\colon BV(0,T)
\ni u \mapsto \Big(\tfrac{1}{\lambda(I_i)}\int_{I_i} u(t) \,\d t\Big)_{i=1}^M\in \R^{M}\;,
\end{equation}
where $I_i\subseteq (0,T)$ for $i=1,\ldots,M$ are suitably chosen
subintervals. Each projection $\Pi$ then gives rise to a relaxation
$$\overline\conv(\overline{D_\SP}) \subseteq \{ v \in L^p(0,T)\colon
\Pi(v) \in C_{\overline{D_\SP},\Pi}\}$$
of the feasible region
\cite[Lemma 3.4]{partI}, where $C_{\overline{D_\SP},\Pi} :=
\conv\{\Pi (u)\colon u \in \overline{D_\SP}\}$. By
a suitable construction of projections $\Pi_k$, with increasing
dimension~$M_k$, a complete outer description of the
finite-dimensional convex hulls $C_{\overline{D_\SP},\Pi}$ also yields a complete outer description of the convex hull of
$\overline{D_\SP}$ in function space \cite[Thm.~3.5]{partI}, \ie
$$\overline\conv(\overline{D_\SP}) =\bigcap_{k\in \N} \{ v \in
L^p(0,T)\colon \Pi_k(v) \in C_{\overline{D_\SP},\Pi_k}\}\;.$$
 
In order to solve the convexified subproblem \eqref{eq:SPC} by means
of the outer approximation algorithm presented
in~\cite[Alg.~1]{partII}, it is particularly desirable that the
sets~$C_{\overline{D_\SP},\Pi}$ are polyhedra for which the separation
problem is tractable, in order to efficiently generate cuts of the
form $a^\top\Pi(u)\leq b$ for $u\in L^p(0,T)$, where $a^\top w\leq b$,
$a\in\R^M$, $b\in \R$ represents a valid inequality for
$C_{\overline{D_\SP},\Pi}$. For prominent examples of~$D$, it is shown
in~\cite{partI} that this is the case for the sets~$C_{D,\Pi}$, \ie
when no fixings are considered. However, it can be shown that the
fixings may destroy this property in general.

For the remainder of this section, we thus focus on the two classes of
constraints~$D$ already discussed in \cite{partI} and defined in
Section~\ref{sec:switchingcon}. We will show that the
sets~$C_{\overline{D_\SP},\Pi}$ are still polyhedra in these cases and
discuss their tractability. For this, we now consider a fixed
projection~$\Pi$ and assume that the intervals~$I_i$,
$i=1,\dots,M$, are pairwise disjoint. Moreover, without loss of
generality, we may assume that the fixing points $0\leq\tau_1< \cdots<
\tau_N < T$ satisfy $\tau_j\notin I_i$ for all~$j=1,\ldots,N$
and~$i=1,\ldots,M$, since otherwise one can refine the projection
intervals and thus generate stronger cutting planes
\cite[Thm.~2.2]{partII}.

\subsection{Restricted total variation}\label{sec:dmax}

If the upper bound~$\sigma$ on the total number of switchings is the
only constraint, as in the definition of~$D(\sigma)$
in~\eqref{eq:dmax}, we obtain the following
\begin{theorem}\label{thm:dmax1}
  The set~$C_{\overline{D(\sigma)_\SP},\Pi}$ is a 0/1 polytope. 
\end{theorem}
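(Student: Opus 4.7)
The plan is to show that every extreme point of $C := C_{\overline{D(\sigma)_\SP}, \Pi}$ lies in $\{0,1\}^M$; combined with the Krein--Milman theorem and compactness of $C$, this yields that $C$ is the convex hull of finitely many 0/1 points, hence a 0/1 polytope. Compactness of $C$ follows from the compact embedding $BV(0,T) \embed L^p(0,T)$: the set $\overline{D(\sigma)_\SP}$ is $L^p$-closed and $BV$-bounded, hence $L^p$-compact; by continuity of $\Pi$, its image $\Pi(\overline{D(\sigma)_\SP}) \subset \R^M$ is compact, and so is its convex hull $C$.

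Let $\alpha^* \in C$ be an extreme point. By Carath\'eodory, $\alpha^* \in \Pi(\overline{D(\sigma)_\SP})$, so there exists $u^* \in \overline{D(\sigma)_\SP}$ with $\Pi(u^*) = \alpha^*$. Since $D(\sigma)$ is $L^p$-closed by~\eqref{eq:D2}, we have $u^* \in D(\sigma)$, so $u^*$ is binary with at most $\sigma$ switching points. Suppose for contradiction that $\alpha^*_i \in (0,1)$ for some $i$. Then $u^*|_{I_i}$ is not a.e.\ constant, and hence $u^*$ has at least one switching point $s^*$ strictly in the interior of $I_i$.

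Next, we construct perturbations $u^\pm$ by shifting $s^*$ to $s^* \pm \epsilon$ for $\epsilon > 0$ sufficiently small that $s^* \pm \epsilon$ stays in $I_i$ and does not cross any adjacent switching of $u^*$. Each $u^\pm$ then has the same number of switchings as $u^*$ (so $|u^\pm|_{BV(0,T)} \leq \sigma$), coincides with $u^*$ outside $I_i$, and therefore preserves the fixings since $\tau_j \notin I_i$. To certify $u^\pm \in \overline{D(\sigma)_\SP}$, one lifts the shift to an $L^p$-approximating sequence $v^n \in D(\sigma)_\SP$ of $u^*$: the switching points of $v^n$ in $I_i$ converge to those of $u^*$, so applying the analogous shift yields $v^{n,\pm} \in D(\sigma)_\SP$ with $v^{n,\pm} \to u^\pm$ in $L^p$. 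The projections satisfy $\Pi(u^\pm)_j = \alpha^*_j$ for $j \neq i$ and $\Pi(u^+)_i - \alpha^*_i = -(\Pi(u^-)_i - \alpha^*_i) \neq 0$, so $\alpha^* = \tfrac12 \Pi(u^+) + \tfrac12 \Pi(u^-)$ with $\Pi(u^+) \neq \Pi(u^-)$. This contradicts extremality of $\alpha^*$, forcing $\alpha^* \in \{0,1\}^M$.

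The main obstacle is making the perturbation argument precise when $u^* \in \overline{D(\sigma)_\SP} \setminus D(\sigma)_\SP$, a case that can arise because the $L^p$-closure may contain functions whose right-continuous representative has a switching exactly at some fixing point $\tau_j$ (pointwise evaluation is not $L^p$-continuous). Since the shift is localized to $I_i$ and leaves behavior near the $\tau_j$ untouched, this can be handled by lifting to an approximating sequence in $D(\sigma)_\SP$, but the convergence of the shifted sequence and the matching of switching points need to be verified carefully.
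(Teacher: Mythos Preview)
Your approach is essentially correct but genuinely different from the paper's. The paper argues by induction on the number of intervals~$I_i$ on which~$u$ is non-constant: on such an interval it replaces~$u$ by the two \emph{flattened} controls~$u_0,u_1$ (equal to~$0$ resp.~$1$ on all of~$I_\ell$, unchanged elsewhere), writes~$\Pi(u)$ as a convex combination of~$\Pi(u_0),\Pi(u_1)$, shows~$u_0,u_1\in\overline{D(\sigma)_\SP}$ by modifying an approximating sequence, and applies the induction hypothesis. You instead perturb a single switching point by~$\pm\epsilon$ and derive a contradiction to extremality. Both routes reduce to showing that the perturbed controls lie in~$\overline{D(\sigma)_\SP}$; the paper's flattening is slightly cleaner here because the modification of the approximating sequence~$v^m$ does not depend on \emph{where} the switchings of~$v^m$ sit inside~$I_\ell$, only on the fact that there is at least one.

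The one imprecise step in your argument is the claim that ``the switching points of $v^n$ in $I_i$ converge to those of $u^*$.'' This is false as stated: $L^p$-convergence of binary $BV$-functions does not force a one-to-one matching of switching points. For instance, $v^n$ could have three switchings $s^*-\tfrac1n,\,s^*,\,s^*+\tfrac1n$ clustering at~$s^*$, or an additional cancelling pair elsewhere in~$I_i$. What \emph{is} true is that, after passing to a subsequence, the (at most~$\sigma$) switching points of~$v^n$ converge in~$\R^\sigma$, and the limit reproduces~$u^*$ in the sense of the paper's Lemma~\ref{lem:rep}; an odd number of them must accumulate at~$s^*$. Shifting exactly those by~$\pm\epsilon$ yields~$v^{n,\pm}\in D(\sigma)_\SP$ (the shift stays inside~$I_i$, hence away from all~$\tau_j$, and the ordering and total count of switchings are preserved for~$n$ large), and~$v^{n,\pm}\to u^\pm$ in~$L^p$. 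With this refinement your argument goes through. A minor remark: the fact that an extreme point of~$\conv(S)$ lies in~$S$ for compact~$S$ is more elementary than Carath\'eodory.
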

\begin{proof}
  The proof is similar to the one of~\cite[Thm.~3.8]{partI}. One can
  again show that $C_{\overline{D(\sigma)_\SP},\Pi}$ is the convex
  hull of all projection vectors resulting from feasible controls that
  are constant almost everywhere on each of the intervals
  $I_1,\ldots,I_N$, \ie $C_{\overline{D(\sigma)_\SP},\Pi}=\conv(K)$
  with
  \[
  \begin{aligned}
	K:=\{\Pi(u) \colon & u \in \overline{D(\sigma)_\SP} \text{ and
          for all }i=1,\ldots,M \text{ there exists } w_i\in
        \{0,1\}\\ & \text{with } u(t)\equiv w_i \text{ f.a.a.\ } t\in
        I_i\}\;.
  \end{aligned}
  \]
  From this, the result follows directly, since $K\subseteq
  \{0,1\}^M$. See Appendix \ref{sec:tvbound} for a detailed proof.
\end{proof}
In the remainder of this subsection, we will show that the separation
problem for~$C_{\overline{D(\sigma)_\SP},\Pi}$ can be solved in
polynomial time.  Without any fixings, \ie when
$D(\sigma)_\SP=D(\sigma)$, the set~$K$ defined in the proof of
Theorem~\ref{thm:dmax1} agrees with
$$\Big\{v\in\{0,1\}^M: \sum_{l=2}^M |v_l-v_{l-1}|\leq
\sigma\Big\}\;.$$ For the slightly different setting where $v_1$ is
fixed to zero, it is shown in~\cite{buchheim23} that the separation
problem for $\conv(K)$ can be solved in polynomial time.  It is easy
to see that the separation problem remains tractable also without this
fixing, \ie for $C_{D(\sigma),\Pi}$.
Our aim is now to efficiently reduce the separation problem for
$C_{\overline{D(\sigma)_\SP},\Pi}$ to the separation problem
for~$C_{D(\sigma),\Pi}$. To this end, we extend the vector~$v$ by the
fixings $c_1,\ldots,c_N$. More precisely, for all~$j\in\{1,\dots,N\}$,
let $i_j\in\{0,\ldots,M\}$ be the index such that $b_{i_{j-1}}\leq
\tau_j\leq a_{i_j}$ holds, where~$b_0:=0$. In addition,
define~$E\colon \R^M \to \R^{M+N}$ by
\begin{equation*}
  Ev=(v_1,\ldots,v_{i_1},c_1,v_{i_1+1},\ldots,v_{i_2},c_2,v_{i_2+1},\ldots,v_{i_N},c_N,v_{i_N+1},\ldots,v_M)^\top\;.
\end{equation*}
The desired reduction is based on the following
\begin{lemma}\label{lem:Cmax}
  A vector $v\in \R^M$ belongs to $K$ if and only
  if $Ev$ belongs to
  $${\cal C}:=\Big\{w\in \{0,1\}^{M+N}: \sum_{l=2}^{M+N}|w_l-w_{l-1}|\leq \sigma\Big\}.$$
\end{lemma}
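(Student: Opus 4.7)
My plan is to prove both directions of the biconditional via a direct correspondence between feasible switching controls and their ``sampled'' value sequences. The key observation, which underlies both directions, is that the components of $Ev$ are exactly the values of a $\{0,1\}$-valued, piecewise constant function read off at $M+N$ time slots arranged in chronological order (one slot per interval $I_i$, contributing $v_i$, and one slot per fixing point $\tau_j$, contributing $c_j$). The quantity $\sum_{l=2}^{M+N}|w_l-w_{l-1}|$ then has the interpretation of the minimum number of switchings any $BV$-function must have in order to realize those prescribed values.

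For the forward direction, suppose $v\in K$, witnessed by some $u\in\overline{D(\sigma)_\SP}$ that is constant $v_i$ on each $I_i$ a.e.\ and satisfies $u(\tau_j)=c_j$ in the right-continuous sense. The disjointness of the $I_i$ together with the assumption $\tau_j\notin I_i$ ensures the slots are genuinely distinct points in time, so their natural ordering reproduces the indexing used in the definition of $E$. Each consecutive difference $|w_l-w_{l-1}|$ is bounded above by the total variation of $u$ over the corresponding interval between two successive slot times; summing over $l$ and using that these intervals cover a subset of $(0,T)$ gives $\sum_{l=2}^{M+N}|w_l-w_{l-1}|\le|u|_{BV(0,T)}\le\sigma$. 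Hence $Ev\in{\cal C}$.

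For the reverse direction, suppose $Ev\in{\cal C}$. I would construct an explicit witness $u\in D(\sigma)_\SP$ by declaring $u\equiv v_i$ on each $I_i$ and extending $u$ in the complementary gaps as follows: inside each gap between two consecutive slots I place exactly one switching point whenever the two slot values disagree, and none otherwise. The placement of these switchings is chosen so that the right-continuous representative attains value $c_j$ at each $\tau_j$; concretely, when the transition enters a slot carrying value $c_j$, the switching is placed at $\tau_j$ itself, and when the transition leaves such a slot, the switching is placed strictly after $\tau_j$. Because $\tau_j\notin I_i$ and the $\tau_j$ are strictly ordered, there is always enough space to do this consistently, even if several fixings lie in the same gap. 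The resulting $u$ is binary, satisfies all fixings, and has total variation exactly $\sum_{l=2}^{M+N}|w_l-w_{l-1}|\le\sigma$, so $u\in D(\sigma)_\SP\subseteq\overline{D(\sigma)_\SP}$ with $\Pi(u)=v$, proving $v\in K$.

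The main obstacle I expect is the bookkeeping in the reverse construction: one has to simultaneously respect the pointwise fixing constraints (understood through the right-continuous representative), keep the total variation minimal so that the bound $\sigma$ is not exceeded, and handle gaps containing several $\tau_j$ in a row. The forward direction is essentially a telescoping argument; the reverse direction requires a careful but elementary case analysis of whether each slot corresponds to an interval or a fixing and how adjacent slots compare.
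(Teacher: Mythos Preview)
Your overall strategy is sound, but both directions overlook that the closure $\overline{D(\sigma)_\SP}$ in $L^p$ need not preserve the pointwise constraints $u(\tau_j)=c_j$, and this matters because the paper explicitly allows a fixing point $\tau_j$ to coincide with an endpoint of some projection interval (only $\tau_j\notin I_i$ is assumed, with $I_i$ open).

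In the forward direction, you assume your witness $u\in\overline{D(\sigma)_\SP}$ satisfies $u(\tau_j)=c_j$, but this can fail: with $\sigma=2$, a single fixing $(\tau_1,c_1)=(\tfrac12,1)$, and controls $u^m$ switching at $\tfrac12$ and $\tfrac12+\tfrac1m$, the $L^p$-limit $u\equiv 0$ lies in $\overline{D(\sigma)_\SP}$ yet has $u(\tau_1)=0$. Your telescoping bound $|c_1-v_1|\le |u|_{BV(\cdot)}$ would then read $1\le 0$. The fix is easy: apply your variation argument to each $u^m\in D(\sigma)_\SP$ from an approximating sequence (these do satisfy the fixings and have variation at most $\sigma$) and pass to the limit using the continuity of $\Pi$. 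This is exactly what the paper does.

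In the reverse direction, your claim that there is ``always enough space'' to place the switching strictly after $\tau_j$ while keeping $u\equiv v_i$ on $I_i$ fails when $\tau_j=a_i$ and $c_j\neq v_i$: any switching point strictly after $\tau_j=a_i$ lies inside $I_i=(a_i,b_i)$ and destroys constancy there, while switching at $\tau_j$ itself forces $u(\tau_j)=v_i\neq c_j$. So no element of $D(\sigma)_\SP$ witnesses $v$ in this boundary case; only an element of the closure does. The paper therefore constructs a sequence $u^m\in D(\sigma)_\SP$ that equals $v_i$ only on a shrunken subinterval $[a_i+\tfrac{\lambda(I_i)}{2m},\,b_i-\tfrac{\lambda(I_i)}{2m})$ and equals $c_j$ on a short interval $[\tau_j,\tau_j+\tfrac{\varepsilon_j}{2m})$, filling the remaining gaps by copying the left neighbour; the $L^p$-limit $u$ then lies in $\overline{D(\sigma)_\SP}$, is constant $v_i$ on each full $I_i$, and satisfies $\Pi(u)=v$.
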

\begin{proof}
  For the first direction, let $v=\Pi(u)\in K$ for some~$u\in
  \overline{D(\sigma)_\SP}$ being constant almost everywhere on
  each projection interval.
  Then there exists a
  sequence $\{u^m\}_{m\in \N}\subseteq D(\sigma)_\SP$ with $u^m\to u$
  in~$L^p(0,T)$ for $m\to \infty$.
  For every $m\in\N$, the control $u^m$ has at most $\sigma$ switchings and satisfies
  $u^m(\tau_j)=c_j$ for $j=1,\ldots,N$, so that we have
  $$\sum_{l=2}^{M+N}
  |E\Pi(u^m)_l-E\Pi(u^m)_{l-1}|\le\sigma\;.$$
  The continuity of $\Pi$ in
  $L^p(0,T)$ yields $v=\Pi(u)=\lim_{m\to \infty} \Pi(u^m)$ and hence
  $$\sum_{l=2}^{M+N}
  |Ev_l-Ev_{l-1}|\le\lim_{m\to\infty}\sum_{l=2}^{M+N}
  |E\Pi(u^m)_l-E\Pi(u^m)_{l-1}| \le\sigma\;,$$
  \ie we have $Ev\in{\cal C}$ as desired.
	
  We next show the opposite direction. So, let $Ev\in{\cal C}$
  for some vector~$v\in \R^M$. In addition, let
  $0=z_0<z_1<\cdots<z_r=T$ include all endpoints of the intervals
  $I_1\ldots,I_M$ and the fixed positions $\tau_1,\ldots,\tau_N$. Construct
  functions $u^m$ for $m\in \N$ such that
  $$\begin{array}{ll}
    u^m(t)=v_{i} &\ \mbox{for }
    t\in[a_i+\tfrac{\lambda(I_i)}{2m},b_i-\tfrac{\lambda(I_i)}{2m})
      \mbox{ and } i=1,\ldots,N\;,\\ u^m(t)=c_j &\ \mbox{for }
      t\in[\tau_j,\tau_j+\tfrac{\varepsilon_j}{2m}) \mbox{ and }
        j=1,\ldots,N\;,
  \end{array}
  $$ where $\varepsilon_j=\min\{|z_i-\tau_j|: i\in\{1,\ldots,r\},
  z_i\neq \tau_j\}>0$.  For points in $(0,T)$ not covered by the above
  intervals, we copy the value of the left neighboring interval. The
  construction is illustrated in Figure~\ref{fig:um}.

  \begin{figure}
    \centering
    \begin{subfigure}[c]{0.9\textwidth}
      \centering
      \begin{tikzpicture}[scale=1.4]
	\draw[->] (0.8,0)--(9.7,0);
	\node at (1.02,0) {$($};
	\node at (1.98,0) {$)$};
	\node at (1.5,0) [below]{\scriptsize$I_i$};
	\draw (2,0.1)--(2,-0.1) node[below] {\scriptsize$\tau_j$};
	\node at (1.98,0) {$)$};
	\node at (3.52,0) {$($};
	\node at (4.98,0) {$)$};
	\node at (4.25,0) [below]{\scriptsize $I_{i+1}$};
	\draw (5.4,0.1)--(5.4,-0.1) node[below] {\scriptsize$\tau_{j+1}$}; 
	\draw (6.2,0.1)--(6.2,-0.1) node[below] {\scriptsize$\tau_{j+2}$}; 
	\draw (7.6,0.1)--(7.6,-0.1);
	\node at (7.4,-0.3) [right] {\scriptsize$\tau_{j+3}$};
	\node at (6.82,0) {$($};
	\node at (7.58,0) {$)$};
	\node at (7.2,0) [below] {\scriptsize$I_{i+2}$};
	\node at (7.62,0) {$($};
	\node at (9.48,0) {$)$};
	\node at (8.45,0) [below] {\scriptsize$I_{i+3}$};
	
	\draw[dotted,gray] (1.15,2)--(1.15,-0.75);
	\draw[dotted,gray] (1.85,2)--(1.85,-0.75);
	\draw[dotted,gray] (2,2)--(2,-0.75);
	\draw[dotted,gray] (2.3,2)--(2.3,-0.75);
	\draw[dotted,gray] (3.775,2)--(3.775,-0.75);
	\draw[dotted,gray] (4.775,2)--(4.775,-0.75);
	\draw[dotted,gray] (5.4,2)--(5.4,-0.75);
	\draw[dotted,gray] (5.52,2)--(5.52,-0.75);
	\draw[dotted,gray] (6.38,2)--(6.38,-0.75);
	\draw[dotted,gray] (6.2,2)--(6.2,-0.75);
	\draw[dotted,gray] (6.92,2)--(6.92,-0.75);
	\draw[dotted,gray] (7.48,2)--(7.48,-0.75);
	\draw[dotted,gray] (7.6,2)--(7.6,-0.75);
	\draw[dotted,gray] (7.885,2)--(7.885,-0.75);
	\draw[dotted,gray] (9.215,2)--(9.215,-0.75);
	
	\draw[thick] (1.15,0.5)--(1.85,0.5) node[midway,above] {\scriptsize$v_{i}$};
	\filldraw[thick] (1.15,0.5) circle (1pt);
	\draw[thick] (2,1.5)--(2.3,1.5) node[midway,above] {\scriptsize$c_j$};
	\filldraw[thick] (2,1.5) circle (1pt);
	\draw[thick](3.775,1.5)--(4.775,1.5) node[midway,above] {\scriptsize$v_{i+1}$};
	\filldraw[thick] (3.775,1.5) circle (1pt);
	\draw[thick] (5.4,0.5)--(5.52,0.5);
	\node[thick] at (5.4,0.5)  [above] {\scriptsize$c_{j+1}$};
	\filldraw[thick] (5.4,0.5) circle (1pt);
	\draw[thick] (6.2,1.5)--(6.38,1.5);
	\node[thick] at (6.2,1.5) [above] {\scriptsize$c_{j+2}$};
	\filldraw[thick] (6.2,1.5) circle (1pt);
	\draw[thick] (6.92,0.5)--(7.48,0.5) node[midway,above] {\scriptsize$v_{i+2}$};
	\filldraw[thick] (6.92,0.5) circle (1pt);
	\draw[thick] (7.6,1.5)--(7.885,1.5) node[midway,above] {\scriptsize$c_{j+3}$};
	\filldraw[thick] (7.6,1.5) circle (1pt);
	\draw[thick] (7.885,0.5)--(9.215,0.5) node[midway,above] {\scriptsize$v_{i+3}$};
	\filldraw[thick] (7.885,0.5) circle (1pt);
	
	\draw[thick,densely dash dot] (1.82,0.5)--(2,0.5);
	\draw[thick,densely dash dot] (2.3,1.5)--(3.775,1.5);
	\draw[thick,densely dash dot] (4.775,1.5)--(5.4,1.5);
	\draw[thick,densely dash dot] (5.52,0.5)--(6.2,0.5);
	\draw[thick,densely dash dot] (6.38,1.5)--(6.92,1.5);
	\draw[thick,densely dash dot] (7.42,0.5)--(7.6,0.5);
      \end{tikzpicture}
      \caption{Construction of the functions $u^m$, $m\in\N$. \label{fig:um}}
    \end{subfigure}\vspace{2em}
    \begin{subfigure}[c]{0.9\textwidth}
      \centering
      \begin{tikzpicture}[scale=1.4]
	\draw[->] (0.8,0)--(9.7,0);
	\node at (1.02,0) {$($};
	\node at (1.98,0) {$)$};
	\node at (1.5,0) [below]{\scriptsize$I_i$};
	\draw(2,0.1)--(2,-0.1) node[below] {\scriptsize$\tau_j$};
	\node at (1.98,0) {$)$};
	\node at (3.52,0) {$($};
	\node at (4.98,0) {$)$};
	\node at (4.25,0) [below]{\scriptsize $I_{i+1}$};
	\draw(5.4,0.1)--(5.4,-0.1) node[below] {\scriptsize$\tau_{j+1}$}; 
	\draw (6.2,0.1)--(6.2,-0.1) node[below] {\scriptsize$\tau_{j+2}$}; 
	\draw (7.6,0.1)--(7.6,-0.1);
	\node at (7.4,-0.3) [right] {\scriptsize$\tau_{j+3}$};
	\node at (6.82,0) {$($};
	\node at (7.58,0) {$)$};
	\node at (7.2,0) [below] {\scriptsize$I_{i+2}$};
	\node at (7.62,0) {$($};
	\node at (9.48,0) {$)$};
	\node at (8.45,0) [below] {\scriptsize$I_{i+3}$};
	
	\draw[thick] (1,0.5)--(2,0.5) node[midway,above] {\scriptsize$v_{i}$};
	\filldraw[thick](1,0.5) circle (1pt);
	\draw[thick] (2,1.5)--(3.5,1.5) node[midway,above] {\scriptsize$c_j$};
	\filldraw[thick] (2,1.5) circle (1pt);
	\draw[thick] (3.5,1.5)--(5.4,1.5) node[midway,above] {\scriptsize$v_{i+1}$};
	\filldraw[thick] (3.5,1.5) circle (1pt);
	\draw[thick] (5.4,0.5)--(6.2,0.5);
	\node[thick] at (5.5,0.5)  [above] {\scriptsize$c_{j+1}$};
	\filldraw[thick] (5.4,0.5) circle (1pt);
	\draw[thick] (6.2,1.5)--(6.8,1.5);
	\node[thick] at (6.3,1.5) [above] {\scriptsize$c_{j+2}$};
	\filldraw[thick] (6.2,1.5) circle (1pt);
	\draw[thick] (6.8,0.5)--(7.6,0.5) node[midway,above] {\scriptsize$v_{i+2}$};
	\filldraw[thick] (6.8,0.5) circle (1pt);
	\draw[thick] (7.6,0.5)--(9.5,0.5) node[midway,above] {\scriptsize$v_{i+3}$};
	\filldraw[thick] (7.6,0.5) circle (1pt);
      \end{tikzpicture}
      \caption{The limit $u$ of the constructed sequence $\{u^m\}_{m\in\N}$. \label{fig:u}}
    \end{subfigure}
    \caption{Illustration of the second part of the proof of Lemma~\ref{lem:Cmax}.}
  \end{figure}
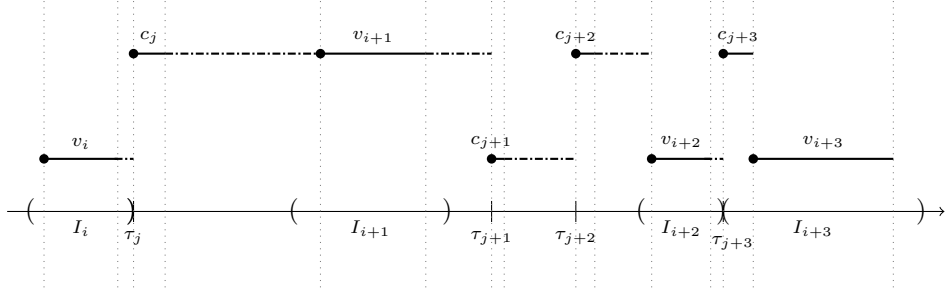
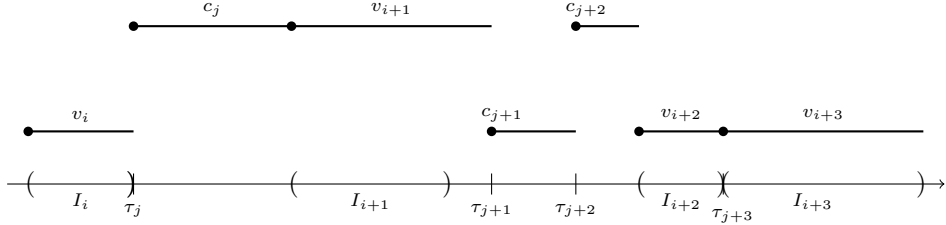

We have $u^m(\tau_j)=c_j$ for every $m\in\N$ and $j=1,\ldots,N$, hence all fixings are respected. Moreover, $Ev\in {\cal C}$ guarantees that $u^m$ switches at most $\sigma$ times, \ie we get $u^m\in D(\sigma)_\SP$. By copying always the value of the left neighboring interval, we guarantee that the control functions $u^m$ converge in $L^p(0,T)$ to some function~$u$; see Figure~\ref{fig:u}. Moreover, by construction, the limit $u$ is $v_1,\ldots,v_M$ almost everywhere on the projection intervals $I_1,\ldots,I_M$, respectively, and due to $\{u^m\}\subseteq D(\sigma)_\SP$, we have $u\in \overline{D(\sigma)_\SP}$. Therefore $v=\Pi(u)\in K$.
\end{proof}
\begin{theorem}\label{thm:dmax2}
  The separation problem for $C_{\overline{D(\sigma)_\SP},\Pi}$ can be
  solved in polynomial time.
\end{theorem}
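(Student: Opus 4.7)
The plan is to reduce the separation problem for $C_{\overline{D(\sigma)_\SP},\Pi}=\conv(K)$ to the separation problem for $\conv(\cal C)$, via the affine embedding $E$, and then invoke the polynomial-time separation result for $\conv(\cal C)$ already available from the unfixed setting.

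First I would observe that $E$ can be written as $Ev = Fv + d$, where $F\in\R^{(M+N)\times M}$ is the linear map that inserts zeros in the $N$ ``fixing positions'' and $d\in\R^{M+N}$ is the vector with entries $c_j$ in these positions and $0$ elsewhere. Using Lemma~\ref{lem:Cmax} I would then show the key identity
\[
\conv(K)=\{v\in\R^M : Ev\in\conv({\cal C})\}.
\]
The inclusion ``$\subseteq$'' is immediate from $K\subseteq E^{-1}({\cal C})$ and the affinity of $E$. For ``$\supseteq$'', if $Ev=\sum_k\lambda_k w^k$ with $w^k\in\cal C$ is any convex representation, then looking at each fixing coordinate shows $w^k_{i_j+j}=c_j$ for every $k$ with $\lambda_k>0$, because $c_j\in\{0,1\}$ is an extreme point of $[0,1]$. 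Hence each such $w^k$ lies in the image of $E$ and equals $Ev^k$ for a unique $v^k$, which by Lemma~\ref{lem:Cmax} belongs to $K$; therefore $v=\sum_k\lambda_k v^k\in\conv(K)$.

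Next I would invoke the existing separation result for $\conv({\cal C})$. The paper has just noted (following \cite{buchheim23}) that a polynomial-time separation routine is available for $C_{D(\sigma),\Pi}$, i.e.\ exactly for $\conv({\cal C})$ (the bound $\sigma$ on the number of sign changes over a $\{0,1\}$-sequence). Given a query point $v^*\in\R^M$, we compute $Ev^*$ in linear time and feed it to this routine. If $Ev^*\in\conv({\cal C})$, we are done by the identity above. Otherwise the routine returns coefficients $a\in\R^{M+N}$ and $b\in\R$ with $a^\top w\le b$ valid on $\conv({\cal C})$ and $a^\top Ev^*>b$. Pulling this back through $E$ gives the inequality $(F^\top a)^\top v\le b-a^\top d$, which is valid on $\conv(K)$ (again by the identity) and violated by $v^*$. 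The whole procedure uses a constant number of linear-algebra operations on top of one call to the unfixed separation oracle, and is therefore polynomial.

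The main technical point — and the only step that is not purely bookkeeping — is the passage ``$\supseteq$'' in the identity characterising $\conv(K)$; this is where the integrality of the fixings $c_j\in\{0,1\}$ (so that the fixing coordinates of every extreme convex combinant are forced to match) is essential. Without it, the convex hull could contain vectors whose preimage under $E$ does not lie in $\conv(K)$, and the reduction would fail. Once this lemma-style identity is in place, the polynomial-time separation for $\conv(K)$ follows by a direct oracle reduction to the already-handled unfixed case.
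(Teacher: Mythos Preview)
Your proof is correct and follows essentially the same route as the paper: both reduce separation for $\conv(K)$ to separation for $\conv({\cal C})$ via the affine embedding~$E$, relying on Lemma~\ref{lem:Cmax} for the identity $\conv(K)=\{v:Ev\in\conv({\cal C})\}$. In fact you spell out the nontrivial direction ``$\supseteq$'' of this identity (using that the fixings $c_j\in\{0,1\}$ are extreme in $[0,1]$), a step the paper asserts directly from Lemma~\ref{lem:Cmax} without further justification.
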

\begin{proof}
  By the proof of Theorem~\ref{thm:dmax1}, we have
  $C_{\overline{D(\sigma)_\SP},\Pi}=\conv(K)$. Using
  Lemma~\ref{lem:Cmax}, we obtain that $v\in
  C_{\overline{D(\sigma)_\SP},\Pi} $ if and only if $Ev\in \conv({\cal
    C})$. The separation problem
  for~$C_{\overline{D(\sigma)_\SP},\Pi}$ thus reduces to the
  separation problem for~$\conv({\cal C})$.
\end{proof}
The separation algorithm used in the outer approximation approach devised
in~\cite{partI} even computes the most violated cutting plane. The
same can be done when considering fixings: our aim is thus to find the
most violated cutting plane $(\bar{a},\bar{b})\in \R^{M+1}$ in the set
\[
H_{\overline{D(\sigma)_\SP},\Pi}=\{(a,b)\in [-1,1]^M\times \R: a^\top
w \leq b \ \forall w\in C_{\overline{D(\sigma)_\SP},\Pi}\}
\]
of all valid inequalities for $C_{\overline{D(\sigma)_\SP},\Pi}$,
where $a\in[-1,1]^M$ can be assumed without loss of generality by
scaling. It is easily verified that this aim can be achieved by first computing
the most violated cutting plane $(a,b)\in\R^{M+N+1}$ for $Ev$ in
\[
H_{{\cal C}}=\{(a,b)\in [-1,1]^{M+N}\times \R: a^\top w \leq b \ \forall w\in {\cal C}\}\
\]
and then replacing the $(i_j+j)$\,th variable by the constant~$c_j$
for all~$j=1,\dots,N$, \ie $\bar{a}$ results from $a$ by deleting the $(i_j+j)$\,th variables and $\bar{b}=b-\sum_{j=1}^N a_{i_j+j}\,c_j$. The first task can
again be reduced to the case without fixings.

\subsection{Switching point constraints}\label{sec:spc}

We next investigate the set $D(P)$ modeling affine-linear switching
point constraints, as defined in \eqref{eq:DP}. Theorem~3.12 in
\cite{partI} shows that $C_{D(P),\Pi}$ is a polytope in~$\R^M$ by
considering all possible assignments~$\varphi\colon \{1,\ldots,\sigma\}\to
\{1,\ldots,r\}$ of switching points to
intervals~$[z_i,z_{i-1}]$,
where~$0=z_0< z_1 < \cdots z_{r-1} < z_r=\infty$ includes all end
points of the intervals~$I_1,\ldots,I_M$ defining
$\Pi$. Considering the (potentially empty) polytopes
\[
P_\varphi:=\big\{(t_1,\dots,t_{\sigma})\in P: t_1\le\dots\le
t_{\sigma}, \ z_{\varphi(i)-1}\le t_{i}\le z_{\varphi(i)}\; \forall
1\leq i\leq \sigma \big\}\;,\] it is easy to show that
$\{(t_1,\dots,t_{\sigma})\in P: t_1\le\dots\le
t_{\sigma}\}=\bigcup_\varphi P_\varphi$, thus~$C_{D(P),\Pi}$ is the
convex hull of a finite union of polytopes and hence a polytope
itself.

In the presence of fixings, we use a similar approach, but we may only
consider assignments~$\varphi$ respecting the fixings $(\tau_j,c_j)\in
[0,T)\times \{0,1\}$ for $j=1,\ldots,N$. For this purpose, let
  $-1=z_0< z_1 < \cdots z_{r-1} < z_r=\infty$ include all end points
  of the intervals $I_1,\ldots,I_M$ defining~$\Pi$ as well as the
  fixing points $\tau_j$, $j=1,\ldots,N$. In addition, let $\cZ$ be
  the set of all those maps $\varphi$ that, for $j=1,\ldots,N$, assign
  an even number of $t_i$'s to each interval $(\tau_{j-1},\tau_j]$ with
$c_{j-1}=c_j$ and an odd number to each other interval, where we set $\tau_0:=-1$ and
$c_0:=0$ as the switch is supposed to be off at the beginning.
We now define
$$J:= \{i\in\{1,\ldots,\sigma\}:\exists j\in\{1,\ldots,N\} \text{ s.t. }z_{\varphi(i)-1}=\tau_j \}$$ and
$$\begin{aligned}Q_\varphi:=\{(t_1,\ldots,t_\sigma)\in P:\ &t_1\leq \cdots\leq t_\sigma, \ z_{\varphi(i)-1}\leq t_i \leq z_{\varphi(i)}\;\forall i=1,\dots,\sigma,\\ & z_{\varphi(i)-1}<t_i \; \forall i \in J\}\end{aligned}$$
as well as
$$V_\varphi:=\{u_{t_1,\ldots,t_\sigma}\colon
(t_1,\ldots,t_\sigma)\in Q_\varphi\}$$ for all~$\varphi\in\cZ$. Then the
following holds true.
\begin{lemma}\label{lem:fixDp}
  $$D(P)_\SP=\bigcup_{\varphi\in \cZ} V_\varphi\;.$$
\end{lemma}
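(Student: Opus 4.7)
The plan is to prove the two inclusions separately and to reduce both to a single bookkeeping statement: that the bin assignment $\varphi(i)\in\{k_{j-1}+1,\dots,k_j\}$ corresponds exactly to $t_i\in(\tau_{j-1},\tau_j]$, where $k_j$ denotes the index with $\tau_j=z_{k_j}$ (using $\tau_0:=-1=z_0$, so $k_0=0$). Combined with the identity that $u_{t_1,\dots,t_\sigma}(\tau_j)$ equals the parity of $|\{i:t_i\le\tau_j\}|$, valid by definition of the right-continuous representative, this will make the parity conditions defining $\cZ$ manifestly equivalent to the fixing conditions $u(\tau_j)=c_j$ for $j=1,\ldots,N$, once we set $c_0:=0$ as in the statement.

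For the inclusion $\bigcup_{\varphi\in\cZ}V_\varphi\subseteq D(P)_\SP$, I would fix $\varphi\in\cZ$ and $(t_1,\dots,t_\sigma)\in Q_\varphi$ and verify the above correspondence. The forward direction follows from $z_{\varphi(i)-1}\le t_i\le z_{\varphi(i)}$; the problematic equality $t_i=z_{\varphi(i)-1}=\tau_{j-1}$ is excluded by the strict inequality $z_{\varphi(i)-1}<t_i$ for $i\in J$, which applies precisely because $\tau_{j-1}$ is a fixing point (the degenerate case $j=1$ is handled by $t_i\ge 0>-1=\tau_0$). The reverse direction rules out $\varphi(i)\ge k_j+1$ by the same strict inequality applied at $z_{k_j}=\tau_j$. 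Consequently the counts $a_j:=|\{i:t_i\in(\tau_{j-1},\tau_j]\}|$ agree with $|\{i:\varphi(i)\in\{k_{j-1}+1,\dots,k_j\}\}|$, and the $\cZ$-parity conditions then force $u(\tau_j)=c_j$ for all $j=1,\dots,N$.

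For the reverse inclusion, given $u_{t_1,\dots,t_\sigma}\in D(P)_\SP$, I would define $\varphi(i):=\min\{k\ge 1:t_i\le z_k\}$. By minimality, this choice satisfies both $t_i\le z_{\varphi(i)}$ and the strict inequality $z_{\varphi(i)-1}<t_i$ for every $i$, so $(t_1,\dots,t_\sigma)\in Q_\varphi$ once we verify $\varphi\in\cZ$. The equivalence between $\varphi(i)\in\{k_{j-1}+1,\dots,k_j\}$ and $t_i\in(\tau_{j-1},\tau_j]$ now follows directly from the construction, so the $\cZ$-parity conditions translate back to the fixings $u(\tau_j)=c_j$, which hold by hypothesis.

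The main obstacle will be the careful treatment of the boundary case $t_i=\tau_{j'}$ for some fixing point $\tau_{j'}$: one must assign $\varphi(i)=k_{j'}$ rather than $k_{j'}+1$, in order to avoid counting $t_i$ in the wrong bin $(\tau_{j'},\tau_{j'+1}]$. This is exactly what the strict inequality $z_{\varphi(i)-1}<t_i$ for $i\in J$ enforces in the definition of $Q_\varphi$, and checking that the resulting parity bookkeeping is consistent at every fixing point is the nontrivial part of both directions.
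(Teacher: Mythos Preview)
Your proposal is correct and follows essentially the same strategy as the paper's proof: for $u=u_{t_1,\ldots,t_\sigma}\in D(P)_\SP$ the paper defines $\bar\varphi$ by the condition $z_{\bar\varphi(i)-1}<t_i\le z_{\bar\varphi(i)}$, which is precisely your $\varphi(i)=\min\{k\ge1:t_i\le z_k\}$, and then reads off the $\cZ$-parities from the fixings. For the reverse inclusion the paper argues, exactly as you do, that the strict-inequality requirement $z_{\varphi(i)-1}<t_i$ for $i\in J$ in the definition of $Q_\varphi$ is what pins each $t_i$ to the correct half-open interval $(\tau_{j-1},\tau_j]$, so that the $\cZ$-parity conditions translate back into the fixings $u(\tau_j)=c_j$.
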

\begin{proof}
  The proof mainly consists in showing that we can restrict ourselves
  to maps $\varphi \in \cZ$ such that the fixings $(\tau_j,c_j)\in
  [0,T)\times \{0,1\}$ for $j=1,\ldots,N$ are satisfied. The proof can
    be found in Appendix \ref{sec:combswitch}.
\end{proof}

Note that~$\cZ$ is finite, so that
$\overline{D(P)_\SP}=\bigcup_{\varphi\in \cZ}
\overline{V_\varphi}$. Moreover, it can be easily seen that the
closure of each set $V_\varphi$ in $L^p(0,T)$ is given as follows.
\begin{lemma}\label{lem:closevq}
  $$\overline{V_\varphi}= \{u_{t_1,\ldots,t_\sigma}\colon (t_1,\ldots,t_\sigma)\in \overline{Q_\varphi}\}$$
\end{lemma}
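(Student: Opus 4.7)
The plan is to reduce both inclusions to a single continuity statement: the map $(t_1,\ldots,t_\sigma)\mapsto u_{t_1,\ldots,t_\sigma}$, restricted to the ordered simplex $\{t_1\le\cdots\le t_\sigma\}\subseteq\R^\sigma$, is continuous into $L^p(0,T)$. To see this, I would invoke the parity-based definition of $u_{t_1,\ldots,t_\sigma}$ from Section~\ref{sec:bv}: the set where $u_{t_1,\ldots,t_\sigma}$ and $u_{t_1',\ldots,t_\sigma'}$ disagree is covered by the $\sigma$ intervals between $t_i$ and $t_i'$, so its Lebesgue measure is at most $\sum_{i=1}^\sigma |t_i-t_i'|$. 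Since both functions take values in $\{0,1\}$, this gives $\|u_{t_1,\ldots,t_\sigma}-u_{t_1',\ldots,t_\sigma'}\|_{L^p(0,T)}^p\le\sum_{i=1}^\sigma|t_i-t_i'|$.

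For the inclusion $\supseteq$, I would pick $(t_1^*,\ldots,t_\sigma^*)\in\overline{Q_\varphi}$ and an approximating sequence $(t_1^n,\ldots,t_\sigma^n)\in Q_\varphi$ with $(t_1^n,\ldots,t_\sigma^n)\to(t_1^*,\ldots,t_\sigma^*)$ in $\R^\sigma$. Since the ordering $t_1\le\cdots\le t_\sigma$ persists in the limit, the continuity statement yields $u_{t_1^n,\ldots,t_\sigma^n}\to u_{t_1^*,\ldots,t_\sigma^*}$ in $L^p(0,T)$. As each $u_{t_1^n,\ldots,t_\sigma^n}\in V_\varphi$, the limit lies in $\overline{V_\varphi}$.

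For the converse $\subseteq$, I would start with $u\in\overline{V_\varphi}$ and a sequence $(t_1^n,\ldots,t_\sigma^n)\in Q_\varphi$ whose images converge to $u$ in $L^p(0,T)$. Since $Q_\varphi\subseteq P$ and $P$ is a polytope (hence bounded), Bolzano--Weierstrass provides a subsequence with $(t_1^{n_k},\ldots,t_\sigma^{n_k})\to(t_1^*,\ldots,t_\sigma^*)\in\R^\sigma$. The limit automatically belongs to $\overline{Q_\varphi}$ by the very definition of closure, and the continuity of the parameterization along this subsequence forces $u=u_{t_1^*,\ldots,t_\sigma^*}$, placing it in the right-hand set.

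The only delicate step is the continuity estimate, and even there the argument is just a union-bound on measures together with the boundedness of the $\{0,1\}$-valued functions. In particular, no special care is required regarding the strict inequalities $z_{\varphi(i)-1}<t_i$ for $i\in J$ appearing in $Q_\varphi$: they simply relax to weak inequalities in the closure $\overline{Q_\varphi}$, without affecting either direction of the argument.
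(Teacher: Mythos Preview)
Your proof is correct and follows essentially the same route as the paper's: both directions rest on the continuity of $(t_1,\ldots,t_\sigma)\mapsto u_{t_1,\ldots,t_\sigma}$ into $L^p(0,T)$, combined with the boundedness of $Q_\varphi\subseteq P$ to extract a convergent subsequence. The only difference is that you supply the continuity directly via the explicit estimate $\|u_t-u_{t'}\|_{L^p}^p\le\sum_i|t_i-t_i'|$, whereas the paper invokes it as a black box from~\cite{partI}.
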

\begin{proof}
  See Appendix \ref{sec:combswitch}.
\end{proof}
Now we have everything at hand to prove our main statement. 
\begin{theorem}\label{thm:Dp}
  The set $C_{\overline{D(P)_\SP},\Pi}$ is a polytope. 
\end{theorem}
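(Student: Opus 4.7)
The plan is to show that $C_{\overline{D(P)_\SP},\Pi}$ is the convex hull of a finite union of polytopes, each arising as the affine image of a polytope in $\R^\sigma$, and then invoke the fact that the convex hull of a finite union of polytopes is again a polytope.

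First, I would combine Lemma~\ref{lem:fixDp} and Lemma~\ref{lem:closevq} with the finiteness of $\cZ$ to write
\[
\overline{D(P)_\SP}=\bigcup_{\varphi\in\cZ}\overline{V_\varphi}=\bigcup_{\varphi\in\cZ}\{u_{t_1,\ldots,t_\sigma}\colon (t_1,\ldots,t_\sigma)\in\overline{Q_\varphi}\},
\]
so that
\[
C_{\overline{D(P)_\SP},\Pi}=\conv\Big(\bigcup_{\varphi\in\cZ}\Pi(\overline{V_\varphi})\Big).
\]
Each $\overline{Q_\varphi}$ is a (possibly empty) polytope in $\R^\sigma$: $Q_\varphi$ is cut out from the polytope $P$ by finitely many linear inequalities, and taking the closure simply replaces the strict inequalities $z_{\varphi(i)-1}<t_i$ for $i\in J$ by their non-strict counterparts.

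The key step is to show that, for every $\varphi\in\cZ$, the map
\[
F_\varphi\colon (t_1,\ldots,t_\sigma)\mapsto \Pi(u_{t_1,\ldots,t_\sigma})
\]
is affine on $\overline{Q_\varphi}$. This is where the careful choice of the refinement $-1=z_0<z_1<\cdots<z_r=\infty$, which includes all end points of the projection intervals $I_1,\ldots,I_M$, becomes crucial. On $\overline{Q_\varphi}$, each switching point $t_i$ is confined to $[z_{\varphi(i)-1},z_{\varphi(i)}]$, and each such subinterval is either contained in some $I_k$ or disjoint from the interior of every $I_k$. Consequently, for every projection interval $I_k$, the set of indices $i$ with $t_i\in I_k$, together with their order, is fixed throughout $\overline{Q_\varphi}$, and the value of $u_{t_1,\ldots,t_\sigma}$ at the left end of $I_k$ is determined by the parity dictated by $\varphi$. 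A direct computation of $\frac{1}{\lambda(I_k)}\int_{I_k}u_{t_1,\ldots,t_\sigma}(t)\,\d t$ then expresses each component of $F_\varphi$ as an affine function of the relevant switching points, establishing the affinity of $F_\varphi$ on $\overline{Q_\varphi}$.

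Since the affine image of a polytope is a polytope, each set $\Pi(\overline{V_\varphi})=F_\varphi(\overline{Q_\varphi})$ is a polytope in $\R^M$. As $\cZ$ is finite and the convex hull of finitely many polytopes is again a polytope (its vertices form a subset of the union of the vertices of the individual polytopes), the theorem follows. I expect the main obstacle to be the clean verification of the affinity of $F_\varphi$: one must carefully bookkeep, for each projection interval $I_k$, which switching points fall into it, in which order, and with which initial parity, and then argue that the boundary cases induced by the strict inequalities defining $Q_\varphi$ do not alter the resulting affine formula upon passing to the closure.
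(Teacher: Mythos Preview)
Your proposal is correct and follows essentially the same route as the paper: decompose $\overline{D(P)_\SP}$ via Lemmas~\ref{lem:fixDp} and~\ref{lem:closevq} into finitely many pieces $\overline{V_\varphi}$, observe that each $\overline{Q_\varphi}$ is a polytope, argue that the map $(t_1,\dots,t_\sigma)\mapsto\Pi(u_{t_1,\dots,t_\sigma})$ is affine on $\overline{Q_\varphi}$, and conclude that $C_{\overline{D(P)_\SP},\Pi}$ is the convex hull of a finite union of polytopes. The only difference is that the paper outsources the affinity argument to~\cite[Thm.~3.12]{partI}, whereas you sketch it explicitly.
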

\begin{proof}
	We have $\Pi(\overline{D(P)_\SP})=\bigcup_{\varphi\in \cZ} \Pi(\overline{V_{\varphi}})$ due to Lemma~\ref{lem:fixDp} and the fact that~$\cZ$ is finite.  
	Since $P$ is a polytope, also~$\overline{Q_\varphi}$ is a polytope. Moreover, analogously to Theorem~3.12 in~\cite{partI}, one
	obtains that the function~$\overline{Q_\varphi}\ni
	(t_1,\dots,t_\sigma)\mapsto\Pi(u_{t_1,\dots,t_\sigma})\in\R^M$ is
	linear for every $\varphi\in \cZ$, so that $\Pi(\overline{V_\varphi})$ is a polytope using Lemma~\ref{lem:closevq}. In summary, we obtain that $\Pi(\overline{D(P)_\SP})$ is a finite union of polytopes and consequently $C_{D(P)_\SP,\Pi}$, as the convex hull of a finite union of polytopes, is a polytope as well.  
\end{proof}

For the remainder of this subsection, we focus on the special case of
dwell time constraints, as defined in \eqref{eq:Ds}. Here, a minimum
time span $s>0$ between two switchings is required. For the case
without fixings, it is stated in~\cite[Thm.~3.14]{partI} that there
exists a separation algorithm with polynomial running time in~$M$ and
in the implicit bound~$\sigma=\lceil \tfrac Ts\rceil$ on the number of allowed switchings. In the presence of
fixings~$(\tau_j,c_j)$, $1\leq j\leq N$, we show in the following that
the separation problem for~$C_{\overline{D(s)_\SP},\Pi}$
is still tractable. More precisely, we claim that there exists a
separation algorithm with polynomial time in~$M$, $\sigma$, and the
number $N$ of fixings.

We thus consider the set
\[
\begin{aligned}
  D(s)_\SP := \big\{ u_{t_1,\dots,t_\sigma}\colon & t_{i}-t_{i-1}\ge s\; \forall \,i = 2, \dots,\sigma,~t_1,\ldots,t_\sigma\geq 0,\\
  & u_{t_1,\dots,t_\sigma}(\tau_j)=c_j\; \forall \,j = 1, \dots,N\big\}.
\end{aligned}
\]
Since~$D(s)$ is a special case of~$D(P)$, the
set~$C_{\overline{D(s)_\SP},\Pi}$ is a polytope in~$\R^M$ by
Theorem~\ref{thm:Dp}. However, it is not a 0/1-polytope in general. In
fact, it is not even a 0/1-polytope without
fixings~\cite[Sect.~3.2]{partI}. Still, the separation problem
for~$C_{\overline{D(s)_\SP},\Pi}$ is tractable. In order to show this,
we use a similar reasoning as in~\cite[Sect.~3.2]{partI} and first
argue that it suffices to consider as switching points the finitely
many points in the set
\[
S:=[0,T]\cap \Big(\Z s+\big(\{0,T\}\cup\{a_i,b_i\colon
i=1,\dots,M\}\cup\{\tau_j\colon
j=1,\dots,N\}\big)\Big)
\] where~$I_i=(a_i,b_i)$ for~$i=1,\dots,M$. The
set~$S$ thus contains, as before, all end points of the
intervals~$I_1,\dots,I_M$ and $[0,T]$ shifted by arbitrary integer
multiples of~$s$, as long as they are included in~$[0,T]$. In
addition, we now need to consider all fixing points
$\tau_1,\ldots,\tau_N$ and their corresponding shiftings. Clearly, we
can compute~$S$ in~$O((M+N)\sigma)$ time.
\begin{lemma}\label{lem:dwell}
  Let~$v$ be a vertex of~$C_{\overline{D(s)_\SP},\Pi}$. Then there
  exists~$u\in \overline{D(s)_\SP}$ with~$\Pi(u)=v$ such that~$u$
  switches only in~$S$.
\end{lemma}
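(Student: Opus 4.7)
My plan is to combine the polytope decomposition from Lemma~\ref{lem:fixDp} with a vertex-lifting argument and then analyze active constraints at vertices of~$\overline{Q_\varphi}$, extending the reasoning of \cite[Thm.~3.14]{partI} to accommodate the fixings.

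First I would reduce to a single piece. Since $\overline{D(s)_\SP}=\bigcup_{\varphi\in\cZ}\overline{V_\varphi}$ (by Lemma~\ref{lem:fixDp} and Lemma~\ref{lem:closevq}) and $\cZ$ is finite, one has
\[
C_{\overline{D(s)_\SP},\Pi}=\conv\Bigl(\bigcup_{\varphi\in\cZ}\Pi(\overline{V_\varphi})\Bigr).
\]
A vertex $v$ of the right-hand side cannot be written as a nontrivial convex combination of points from this union, so $v$ must be a vertex of at least one $\Pi(\overline{V_\varphi})$ for some $\varphi\in\cZ$ that I now fix.

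Next I would lift $v$ to a vertex of $\overline{Q_\varphi}$. As shown in the proof of Theorem~\ref{thm:Dp}, the map $F\colon\overline{Q_\varphi}\ni(t_1,\dots,t_\sigma)\mapsto\Pi(u_{t_1,\dots,t_\sigma})\in\R^M$ is affine-linear. The preimage $F^{-1}(v)\cap\overline{Q_\varphi}$ is then a nonempty face of the polytope $\overline{Q_\varphi}$, hence a polytope itself, which possesses a vertex $(t_1^*,\dots,t_\sigma^*)$. Any vertex of a face of a polytope is also a vertex of the whole polytope, so $(t_1^*,\dots,t_\sigma^*)$ is a vertex of $\overline{Q_\varphi}$ with $\Pi(u_{t_1^*,\dots,t_\sigma^*})=v$.

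The heart of the argument is then to show that every vertex of $\overline{Q_\varphi}$ has all components in $S$. The polytope $\overline{Q_\varphi}$ is defined by the inequalities
\[
z_{\varphi(i)-1}\le t_i\le z_{\varphi(i)}\quad(i=1,\dots,\sigma),\qquad t_{i+1}-t_i\ge s\quad(i=1,\dots,\sigma-1),
\]
together with the inequalities describing $P$ (which, for dwell-time constraints, add nothing beyond $t_i\ge 0$). At a vertex, $\sigma$ of these must be active and linearly independent. Group the indices $1,\dots,\sigma$ into maximal chains on which consecutive separation constraints $t_{i+1}^*-t_i^*=s$ are tight; within such a chain, all $t_i^*$ are determined by any single one via $t_i^*=t_p^*+(i-p)s$. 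Linear independence forces at least one $t_i^*$ in each chain to be pinned by a bound $t_i^*=z_{\varphi(i)-1}$ or $t_i^*=z_{\varphi(i)}$. By construction of the refined grid, every $z_\ell$ belongs to $\{0,T\}\cup\{a_i,b_i\}\cup\{\tau_j\}$, so every $t_i^*$ lies in that set shifted by an integer multiple of $s$ and, being in $[0,T]$, thus lies in $S$.

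The main obstacle I expect is carefully handling the strict inequalities $z_{\varphi(i)-1}<t_i$ for $i\in J$ in the definition of $Q_\varphi$: these are strict in $Q_\varphi$ but relaxed to $\le$ in $\overline{Q_\varphi}$ (used implicitly via Lemma~\ref{lem:closevq}), and one must verify that lifting through the closure is harmless, \emph{i.e.}, that the vertex $(t_1^*,\dots,t_\sigma^*)$ of $\overline{Q_\varphi}$ still produces a control $u_{t_1^*,\dots,t_\sigma^*}\in\overline{D(s)_\SP}$; this follows directly from Lemma~\ref{lem:closevq} since $(t_1^*,\dots,t_\sigma^*)\in\overline{Q_\varphi}$. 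Finally, $u:=u_{t_1^*,\dots,t_\sigma^*}$ switches only at the points $t_i^*\in S$, yielding the claim.
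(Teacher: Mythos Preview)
Your argument is correct and takes a genuinely different route from the paper. The paper works directly with a control $u\in\overline{D(s)_\SP}$ realizing~$v$: it picks $c$ so that $v$ is the unique minimizer of $c^\top\cdot$ over $C_{\overline{D(s)_\SP},\Pi}$, takes an approximating sequence $u^m\in D(s)_\SP$, and shows by a perturbation argument (shifting maximal chains of tight dwell-time constraints simultaneously) that any switching point inside some $I_i$ but outside~$S$ would contradict uniqueness of~$v$. The remaining switching points, lying outside all~$I_i$, are then shifted leftwards towards~$S$ without changing the projection, with BV compactness used to pass to limits. Your approach instead exploits the polytope decomposition already established in Lemmas~\ref{lem:fixDp}--\ref{lem:closevq} and Theorem~\ref{thm:Dp}: you lift $v$ to a vertex of some~$\overline{Q_\varphi}$ and read off the claim from the active-constraint structure there. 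This is cleaner and avoids all limiting arguments and BV compactness, at the price of relying on the decomposition machinery; the paper's proof is more self-contained and closer in spirit to the unfixed case in~\cite[Lemma~3.13]{partI}.

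Two minor points are worth tightening. First, for the dwell-time~$P$ the set~$\overline{Q_\varphi}$ is in general an unbounded polyhedron (whenever $\varphi(i)=r$ for some~$i$), not a polytope; your lifting still works because $\overline{Q_\varphi}$ is pointed (all $t_i\ge 0$), so every nonempty face has a vertex, and any $t_i^*>T$ simply does not correspond to a switching of~$u$ on~$(0,T)$. Second, your claim that ``every $z_\ell$ belongs to $\{0,T\}\cup\{a_i,b_i\}\cup\{\tau_j\}$'' fails at the endpoints $z_0=-1$ and $z_r=\infty$. This is harmless, since neither can be an active bound ($t_i\ge 0$ dominates $t_i\ge z_0$, and $t_i\le\infty$ is vacuous), but you should then explicitly allow the anchor of a chain to be the constraint $t_i^*=0$, which also lands in the generating set of~$S$.
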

\begin{proof}
  The proof is similar to that of~\cite[Lemma~3.13]{partI}, but one
  needs to pay attention to the fixings when shifting switching points
  outside of~$S$. The full proof can be found in
  Appendix~\ref{sec:combswitch}.
\end{proof}
We next show that there exists an efficient separation algorithm for
$C_{\overline{D(s)_\SP},\Pi}$ by specifying an efficient optimization
algorithm over
$C_{\overline{D(s)_\SP},\Pi}$. Let~$\omega_1\dots,\omega_{|S|}$ be the
elements of~$S$ sorted in ascending order.
\begin{theorem}\label{thm:dwell}
  One can optimize over~$C_{\overline{D(s)_\SP},\Pi}$ (and hence also
  separate from~$C_{\overline{D(s)_\SP},\Pi}$) in time polynomial
  in~$M$, $\sigma$, and $N$.
\end{theorem}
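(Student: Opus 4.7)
The plan is to build an explicit dynamic programming algorithm that optimizes an arbitrary linear functional $c^\top v$ over $C_{\overline{D(s)_\SP},\Pi}$, and then invoke the polynomial equivalence of separation and optimization for polytopes (Grötschel--Lovász--Schrijver) to conclude tractability of separation. By Lemma~\ref{lem:dwell}, every vertex of $C_{\overline{D(s)_\SP},\Pi}$ is of the form $\Pi(u)$ for some $u\in \overline{D(s)_\SP}$ whose switching points all lie in the finite set $S=\{\omega_1,\dots,\omega_{|S|}\}$ with $|S|=O((M+N)\sigma)$. Hence it suffices to optimize over such ``combinatorial'' controls.

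The key observation is that for any constant segment $[\omega_i,\omega_j]\subseteq [0,T]$ on which $u\equiv w\in\{0,1\}$, the contribution to $c^\top \Pi(u)$ equals
\[
w\sum_{\ell=1}^{M}\frac{c_\ell}{\lambda(I_\ell)}\,\lambda\bigl(I_\ell\cap[\omega_i,\omega_j]\bigr),
\]
which can be precomputed for every pair $(\omega_i,\omega_j)$ in time $O(M|S|^2)$. I would then set up a DP with states $(i,k,w)\in\{0,\dots,|S|\}\times\{0,\dots,\sigma\}\times\{0,1\}$, where $i$ indicates that the last switching (or the initial time, if $k=0$) occurred at $\omega_i$, $k$ is the number of switchings used so far, and $w$ is the current value of $u$. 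The transitions go from $(i,k,w)$ to $(j,k+1,1-w)$ for every $j$ with $\omega_j\ge \omega_i+s$, adding the precomputed segment contribution. Initial states $(0,0,w)$ for $w\in\{0,1\}$ and terminal contributions on the tail $[\omega_i,T]$ complete the picture. Without fixings this already yields an $O((M+N)^2\sigma^3)$ algorithm.

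The main obstacle is the enforcement of the fixings $u(\tau_j)=c_j$, because a state $(i,k,w)$ only records the last switching point, not the full history, while the value of $u$ at $\tau_j$ is determined by the entire switching pattern. I would handle this by exploiting that $\tau_j\in S$: the value $u(\tau_j)$ equals the ``current'' value $w$ of the state whose last switching $\omega_i$ is the largest element of $S$ with $\omega_i\le \tau_j$ and whose successor switch (if any) occurs strictly after $\tau_j$. Concretely, one can split the time horizon into the subintervals delimited by $0,\tau_1,\dots,\tau_N,T$, run an independent DP on each subinterval with boundary values prescribed by the fixings (and parity of the number of in-between switches forced accordingly: even if $c_{j-1}=c_j$, odd otherwise), and glue the resulting tables by summing switch counts. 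Infeasibility on a subinterval (e.g.\ dwell-time too large to accommodate the required parity) is detected by an empty DP table and prunes that branch. Controls not switching exactly in $S$ are handled by Lemma~\ref{lem:dwell}.

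Since each subinterval DP runs in $O(|S|^2\sigma)$ time and there are $N+1$ subintervals, the overall optimization algorithm runs in time polynomial in $M$, $\sigma$, and $N$. Because $C_{\overline{D(s)_\SP},\Pi}$ is a polytope (Theorem~\ref{thm:Dp}) with vertices in a box of polynomial encoding length, the polynomial equivalence of separation and linear optimization yields a polynomial-time separation algorithm, completing the proof.
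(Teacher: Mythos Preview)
Your strategy is correct and matches the paper's: reduce to controls with switching points in~$S$ via Lemma~\ref{lem:dwell}, solve the linear optimization over this finite set by dynamic programming, and invoke the optimization--separation equivalence. The differences are in the DP design and in how the fixings are handled.

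The paper's DP is leaner. Its state is simply $(t,b)$ with $t\in S$ and $b\in\{0,1\}$, storing the optimal partial objective $c^*(t,b)$ on $[0,t]$ subject to $u(t)=b$. The recursion exploits that a switch at $\omega_k$ forces $u$ to be constant on $[\omega_k-s,\omega_k]$, so one only needs to look back to $\omega_k-s\in S$; this gives $O(1)$ transitions per state and $O(|S|)$ total work. Your state $(i,k,w)$ carries the last switch position and the switch count, leading to $O(|S|)$ transitions per state and an extra factor~$\sigma$; tracking $k$ is unnecessary for $D(s)$, since the dwell-time already bounds the number of switches implicitly and the current value $w$ determines the parity.

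For the fixings, the paper does \emph{not} decompose into subintervals but modifies the recursion inline: at each $\omega_k$ it checks whether the interval $(\omega_k-s,\omega_k)$ contains a fixing point with the ``wrong'' value, and at each $\tau_j$ it gives a separate formula for $c^*(\tau_j,c_j)$ and $c^*(\tau_j,1-c_j)$, the latter encoding the closure case where a switch lands exactly at~$\tau_j$. Your subinterval decomposition is a reasonable alternative, but the phrase ``glue the resulting tables by summing switch counts'' is not enough: the dwell-time constraint couples consecutive subintervals through the \emph{position} of the last switch, not merely through $k$. Since your state already carries~$i$, the fix is to glue over the full state $(i,k,w)$ at each boundary $\tau_j$, enforcing $w=c_j$ (and allowing the closure case of a switch at $\tau_j$). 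Once this is made explicit, your argument goes through with a somewhat worse but still polynomial running time.
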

\begin{proof}
  By Lemma \ref{lem:dwell}, it suffices to optimize over the
  projections of all~$u\in \overline{D(s)_\SP}$ with switchings only
  in~$S$. This can be done by a dynamic programming approach similar
  to the one presented in \cite[Thm.~3.14]{partI}; we mainly need to
  change the recursion formula for the fixing
  points~$\tau_1,\ldots,\tau_N$. So assume that an arbitrary
  objective function~$c\in\R^M$ is given. Then we compute the optimal
  value
  \[c^*(t,b):=\min\;c^\top \Pi(u\cdot\chi_{[0,t]})\; \text{
    s.t. }u\in \overline{D(s)_\SP},\;u(t)=b\text{ if }t<T\]
  recursively for all~$t\in S$ as follows.
  Starting with~$c^*(\omega_1,b)=0$ if $\tau_1\neq 0$ and 
  $$c^*(\omega_1,b)=\begin{cases} \infty, &\text{ if } c_1=1 \text{
    and } b=0 \\ 0, &\text{ otherwise}
  \end{cases}$$
  otherwise, we obtain for $k\in\{2,\ldots,|S|\}$ with $\omega_k\in
  S\setminus\{\tau_1,\ldots,\tau_N\}$ that
  \[
  c^*(\omega_k,b)=\min\begin{cases}
  \begin{array}{ll}
    c^*(\omega_{k-1},b)+c^\top\Pi(b\chi_{[\omega_{k-1},\omega_k]})\\[0.75ex]
    \parbox[t]{5cm}{
      $c^*(\omega_k-s,1-b)$\\
      $\text{}\quad+c^\top\Pi((1-b)\chi_{[\omega_k-s,\omega_k]}),$}
    & \parbox[t]{4cm}{
      if $\omega_k\ge s$ and \\
      $\text{}\quad(\omega_k-s,\omega_k)\cap \tau(b)=\emptyset$}\\[3.25ex]
    c^\top\Pi((1-b)\chi_{[0,\omega_k]}),
    & \parbox[t]{4cm}{
      if $\omega_j< s$, $b=1$ and \\
      $\text{}\quad[0,\omega_k)\cap \tau(b)=\emptyset$} 
  \end{array}
  \end{cases}	
  \]
  where for $b\in\{0,1\}$ we define $\tau(b):=\{\tau_j: c_j=b,
  j=1,\ldots,N\}$. For $k\in\{2,\ldots,|S|\}$ with~$\tau_j=\omega_k\in
  S$, $i\in\{1,\ldots,N\}$, we get
  \[
  c^*(\tau_j,c_j)=\min\begin{cases}
  \begin{array}{ll}
    c^*(\omega_{k-1},c_j)+c^\top\Pi(c_j\,\chi_{[\omega_{k-1},\tau_j]})\\[0.75ex]
    \parbox[t]{5cm}{
      $c^*(\tau_j-s,1-c_j)$\\
      $\text{}\quad+c^\top\Pi((1-c_j)\chi_{[\tau_j-s,\tau_j]}),$}
    & \parbox[t]{4cm}{if $\tau_j\ge s$ and \\
      $\text{}\quad(\tau_j-s,\tau_j)\cap \tau(c_j)=\emptyset$} \\[3.25ex]
    0, & \parbox[t]{4cm}{if $\tau_j< s$, $c_j=1$ and \\
      $\text{}\quad[0,\tau_j)\cap \tau(c_j)=\emptyset$} \\
  \end{array}
  \end{cases}	
  \]
  and 
  \[
  c^*(\tau_j,1-c_j)=\begin{cases}
  \begin{array}{ll}
    0,	& \text{if }\tau_j< s \text{ and } c_j=0 \\[0.75ex]
    \infty,  &
    \parbox[t]{5cm}{if $\tau_j <  s$ and $c_j=1$, or 
      $\tau_j\ge s$ and
      $(\tau_j-s,\tau_j)\cap \tau(1-c_j)\neq \emptyset$}\\[3.25ex]
    \parbox[t]{4cm}{$c^*(\tau_j-s,c_j)$ \\
      $\text{}\quad+c^\top\Pi(c_j\,\chi_{[\tau_j-s,\tau_j]}),$}
    &\parbox[c]{4cm}{\text{}\\ otherwise.} 
  \end{array}
  \end{cases}	
  \]
  The desired optimal value is~$\min\{c^*(T,0),c^*(T,1)\}$ then, and a
  corresponding optimal solution can be derived if this value is
  finite. Otherwise, the problem is infeasible due to the fixings, \ie
  the polytope~$C_{\overline{D(s)_\SP},\Pi}$ is empty.
\end{proof}

In the proof of Theorem~\ref{thm:dwell}, the recursion
formula of the dynamic optimization approach over
$C_{\overline{D(s)_\SP},\Pi}$ is the same for the fixing points
$\tau_1,\ldots,\tau_N$ as for the points in $S\setminus
\{\tau_1,\ldots,\tau_N\}$, as long as the fixings are respected. This
is not suprising, since in this case we do not know whether the
control is already constantly $c_j$ before or after $\tau_j$. However,
if the fixing is not respected, then it is clear that the control has
to be constantly $c_j$ on $[\tau_j-s,\tau_j)$ and one has to check
  whether this is possible taking the other fixings and the start
  value zero into account. In particular, if $\tau_1=0$ and $c_1=1$,
  all controls in $\overline{D(s)_\SP}$ have to respect the fixing due
  to the start value zero, so that we have $c^*(0,0)=\infty$ in this
  case.

\section{Computation of primal and dual bounds}\label{sec:comp}

The main task in every branch-and-bound algorithm is the fast
computation of primal and dual bounds. While primal bounds are often
obtained by applying rather straightforward heuristics to the original
problem~\eqref{eq:optprob}, see Section~\ref{sec:primal}, the computation
of dual bounds is a more complex task, see Section~\ref{sec:dual}.

\subsection{Dual bounds}\label{sec:dual}

Our goal is to obtain strong dual bounds by solving the convexified
subproblems~$\eqref{eq:SPC}$; see Section~\ref{sec:branching}. To this
end, we can use the outer approximation algorithm developed
in~\cite{partII},
since~$\overline\conv(D_\SP)=\overline\conv(\overline{D_\SP})$ as
already noted above. This approach is applicable whenever we have a
separation algorithm for~$\overline\conv(\overline{D_\SP})$ at hand;
see Section~\ref{sec:convhull}.  Within the outer approximation
algorithm, we thus need to repeatedly solve problems of the form
\begin{equation}\tag{SPC\text{$_k$}}\label{eq:SPCk}
\left\{\quad		    
\begin{aligned}
\mbox{min }~ &J(Su,u) \\ 
\text{s.t. }~ & u \in [0,1] \quad\text{a.e.~in }(0,T), \\
&  Gu \leq b\;,
\end{aligned}
\qquad\right.
\end{equation}
where $G\colon L^2(0,T)\to \R^k$ with $(Gu)_\ell=a_\ell^\top
\Pi_\ell(u)$ for $\ell=1,\ldots,k$. The latter constraints represent
the cutting planes for the sets $C_{\overline{D_\SP},\Pi}$ that have
been generated so far.

As discussed in Section~\ref{sec:branching}, our branching strategy
will implicitly fix the control~$u$ on certain subintervals of the time horizon~$[0,T]$; see
Example~\ref{ex:fix1} and Example~\ref{ex:fix2}. Let~$\AA$ be the
union of all such fixed intervals and set $\II:=[0,T]\setminus \AA$. Denote
the restrictions to~$\AA$ and~$\II$ by
$\chi_{\AA}:L^2(0,T) \to L^2(\AA)$ and~$\chi_{\II}:L^2(0,T) \to L^2(\II)$, respectively, and let $\chi_\AA^*$
and $\chi_{\II}^*$ be the respective extension-by-zero operators mapping
from $L^2(\AA)$ and $L^2(\II)$ to $L^2(0,T)$, respectively. Then we
can restrict~\eqref{eq:SPCk} to the unfixed control $u|_{\II}$, which leads to
\begin{equation*}
\left\{\quad	
\begin{aligned}
	\mbox{min } &J(S(\chi_{\II}^*u|_{ \II}+\chi_{\AA}^*u|_{ \AA}),\chi_{\II}^*u|_{ \II}+\chi_{\AA}^*u|_{ \AA})=:f(u|_{ \II}) \\ 
	\text{s.t. }~ & u|_{ \II} \in [0,1] \quad\text{a.e.~in } \II, \\
	& G(\chi_{\II}^*u|_{ \II}) \leq b-G(\chi_{\AA}^*u|_{ \AA})\;,
\end{aligned}
\qquad\right.
\end{equation*}
where $u|_{ \AA}$ is fixed and implicitly given through the fixings.
As a first attempt to solve this problem, we applied the semi-smooth
Newton method described in \cite{partII}, but, as the branching
implicitly fixed larger parts of the switching structure, \ie $\AA$
got larger, the semi-smooth Newton method matrix became singular.
To overcome these numerical issues, we decided to replace the
semi-smooth Newton method by the alternating direction method of
multipliers (ADMM), which was first mentioned in~\cite{GM75} for
nonlinear elliptic problems and is widely applied to elliptic control
problems~\cite{AS08,B93,KW18}. Its convergence for convex optimization
problems is well-studied, see, \eg
\cite{FG83,GM76,GLT89,GOS14}. Recently,~\cite{GSY20} also addressed
linear parabolic problems with state constraints by the ADMM and
proved its convergence without any assumptions on the existence and
regularity of the Lagrange multiplier.

We first need to rewrite our
problem in the form
\begin{equation}\tag{SPC\text{$'_k$}}\label{eq:SPCk'}
\left\{\quad	
\begin{aligned}
\mbox{min }~ &f(u|_{ \II})+I_{(-\infty,\bar{b}]}(v)+I_{[0,1]}(w) \\ 
\text{s.t. }~ & u|_{ \II}-w=0 \quad\text{a.e.~in } \II, \\
& G(\chi_{\II}^*u|_{ \II})-v=0\;,
\end{aligned}
\qquad\right.
\end{equation}
where $\bar{b}:= b-G(\chi_{\AA}^*u|_{ \AA})$ and
$$I_{(-\infty,\bar{b}]}(v)=\begin{cases}0, &v\leq \bar{b}  \\
\infty, &\mbox{otherwise}\;, 
\end{cases}\quad \mbox{ and } \quad
	I_{[0,1]}(w)=\begin{cases}0, &w(t)\in[0,1] \mbox{ f.a.a. } t \\
		\infty, &\mbox{otherwise}\;.
	\end{cases} $$
Note that \eqref{eq:SPCk'} is still a convex optimization problem, but no
longer strictly convex. The first-order algorithm~ADMM is an
alternating minimization scheme for computing a saddle point of the
augmented Lagrangian
$$\begin{aligned}
  L_{\rho,\beta}(u|_{\II},v,w,\lambda,\mu) =~ & f(u|_\II) +I_{(-\infty,\bar{b}]}(v)+I_{[0,1]}(w)\\
    &+ \lambda^\top(G(\chi_{\II}^*u|_{\II})-v)+\langle \mu, u|_\II-w\rangle_{L^2(\II),L^2(\II)}\\
    &+ \tfrac \rho 2\|G(\chi_{\II}^*u|_{\II})-v\|^2+\tfrac \beta 2\|u|_{\II}-w\|^2_{L^2(\II)}\;,
\end{aligned}$$
which differs from the Lagrangian by the penalty terms $\tfrac \rho
2\|G(\chi_{\II}^*u|_{\II})-v\|^2$ for the cutting planes and $\tfrac
\beta 2\|u|_{\II}-w\|^2_{L^2(\II)}$ for the box constraints, but has
the same saddle points as the Lagrangian \cite{FG83}. First, the
augmented Lagrangian is minimized with respect to the unfixed control
variables
$$u|_{\II}=\arg \min_{u|_{\II}} L_{\rho,\beta}(u|_{\II},v,w,\lambda,\mu),$$
then with respect to $v$ and $w$,
\ie
$$\begin{aligned}
  v&=\arg \min_{v} L_{\rho,\beta}(u|_{\II},v,w,\lambda,\mu)\;,\\
  w&=\arg \min_w L_{\rho,\beta}(u|_{\II},v,w,\lambda,\mu)\;,
\end{aligned}$$
and finally, the dual variables $\lambda$ and $\mu$ are updated by a
gradient step as follows
$$\begin{aligned} \lambda&=\lambda + \gamma_\rho\rho\,\partial_\lambda L_{\rho,\beta}(u|_{\II},v,w,\lambda,\mu)\;, \\
  \mu&=\mu+\gamma_\beta\beta\,\partial_\mu L_{\rho,\beta}(u|_{\II},v,w,\lambda,\mu)\;.
\end{aligned}$$
For $\gamma_\rho,\gamma_\beta\in (0,\tfrac{1+\sqrt{5}}{2})$, the convergence of ADMM is guaranteed \cite{Glo84}, but these parameters and the penalty parameters influence the convergence performance and numerical stability of the algorithm. For instance, the penalty parameter $\beta$ should be chosen close to $\alpha$ in order to balance the Tikhonov term $\tfrac{\alpha}{2}\|\chi_{\II}^*u|_{\II}+\chi_{\AA}^*u|_{\AA}-\tfrac{1}{2}\|_{L^2(0,T)}$ and the penalty term of the box constraints in the augmented Lagrangian. Moreover, the best choice for $\gamma_\rho$ and $\gamma_\beta$ generally seems to be one \cite{Glo84}. We thus use $\gamma_\rho=\gamma_\beta=1$ in the following.  
 
With the solution mapping $S=\Sigma\circ \Psi +\zeta$, as defined in Section~\ref{sec:pre}, the reduced objective in~\eqref{eq:SPCk'} reads
	$$f(u|_{\II})=\tfrac{1}{2}\, \|\Sigma \Psi(\chi_{\II}^*u|_{\II}+\chi_{\AA}^*u|_{ \AA}) + \zeta - y_{\textup{d}}\|_{L^2(Q)}^2 
	+ \tfrac{\alpha}{2} \,\|\chi_{\II}^*u|_{\II}+\chi_{\AA}^*u|_{ \AA} - \tfrac{1}{2}\|_{L^2(0,T)}^2$$
such that, by the chain rule, its Fr\'echet derivative at $u|_{ \II} \in
L^2(\II)$ is given by
	\begin{equation*}
	f'(u|_{ \II}) = \chi_{\II}\Psi^* \Sigma^* (\Sigma \Psi(\chi_{\II}^*u|_{\II}+\chi_{\AA}^*u|_{ \AA})  + \zeta - y_{\textup{d}}) + \alpha \big(u|_{ \II} - \tfrac{1}{2}\big) \in L^2(\II),
	\end{equation*}
	where we identified $L^2(\II)$ with its dual using the Riesz
	representation theorem. 
	For the penalty term associated with the cutting planes, the Fr\'echet derivative at $u|_{ \II} \in
	L^2(\II)$ is
	$$\rho\, \chi_{\II} G^*\big(G(\chi_{\II}^*u|_{ \II})-v\big)\;.$$ With the above Fr\'echet derivatives at hand, we are able to write down the ADMM method for \eqref{eq:SPCk'}.
Algorithm~\ref{alg:admm} shows the procedure, where $m$ is the iteration counter. 
\begin{algorithm}[htb]
	\caption{ADMM method for \eqref{eq:SPCk'}}\label{alg:admm}
	\begin{algorithmic}[1]
		\STATE{ Choose $v^0,\lambda^0\in\R^\ell\;$, $w^0,\mu^0\in L^2(\II)$ and set $m=0$\label{it:step}}
		\REPEAT
		\STATE{\label{it:3} Solve the equation 
				$$\begin{aligned}(\Psi^*\Sigma^*\Sigma\Psi+(\alpha+\beta)I+\rho\, G^*G)\chi_{\II}^*u_{\ | \II}^{m+1}=~& \Psi^*\Sigma^*\big( y_{\textup{d}}-\zeta-\Sigma\Psi\chi_{\AA}^*u|_{ \AA}\big) -\mu^m+\beta\, w^m\\
				&-G^*\big(\lambda^m-\rho\, v^m\big)+\tfrac{\alpha}{2}\quad  \mbox{a.e. in } \II\end{aligned}$$}
		\STATE{\label{it:4} $v^{m+1}=\min\{ G(\chi_{\II}^*u_{\ | \II}^{m+1})+\tfrac{\lambda^m}{\rho},b-G(\chi_{\AA}^*u|_{ \AA})\}$}
		\STATE{\label{it:5} $w^{m+1}=\max\{\min\{ u_{\ | \II}^{m+1}+\tfrac{\mu^m}{\beta},1\},0\}$}
		\STATE{\label{it:6} $\lambda^{m+1}=\lambda^m + \rho\,\big(G(\chi_{\II}^*u_{\ | \II}^{m+1})-v^{m+1}\big)$}
		\STATE{\label{it:7} $\mu^{m+1}=\mu^m + \beta\,\big(u_{\ | \II}^{m+1}-w^{m+1}\big)$}
		\STATE{$m=m+1$}
		\UNTIL{stopping criterion satisfied}
	\end{algorithmic}
\end{algorithm}

The primal and dual residuals 
$$
r_P^{m}=\begin{pmatrix}
G(\chi_{\II}^*u_{\ | \II}^{m})-v^{m} \\
u_{\ | \II}^{m}-w^{m}
\end{pmatrix}, \quad
r_D^{m}=\rho\, \chi_\II G^*(v^{m-1}-v^{m})+\beta(w^{m-1}-w^{m})
$$
of the optimality conditions for \eqref{eq:SPCk'} can be used to bound the primal objective sub-optimality \cite{BOY11}, \ie $f(u_{\ |\II}^{m})-f(u^\star)$. More precisely, \cite{BOY11} derived sub-optimality estimates for problems in $\R^n$ based on their primal and dual residuals, but the arguments readily carry over to our setting. We thus have
$$ f(u_{\ |\II}^{m})-f(u^\star) \leq - (r_P^{m})^\top \begin{pmatrix}
\lambda^{m} \\ \mu^{m} 
\end{pmatrix} + (u_{\ |\II}^{m}-u_{\ |\II}^\star, r_D^{m})_{L^2(\II)}$$
so that we can estimate 
\begin{equation} \label{eq:errorADMM}
 f(u_{\ |\II}^{m})-f(u^\star) \leq - (r_P^{m})^\top \begin{pmatrix}
\lambda^{m} \\ \mu^{m} 
\end{pmatrix} + \sqrt{T}\, \|r_D^{m}\|_{L^2(\II)}=:e^{m},
\end{equation}
since $u_{\ |\II}^{m},\ u_{\ |\II}^\star \in \{0,1\}$ a.e. in $\II\subset [0,T]$. As a reasonable stopping criterion, we choose that the primal and dual residual must be small, as well as the primal objective sub-optimality. As tolerances for the residuals, we may use an absolute and relative criterion, such as 
$$\begin{aligned}
\|r_P^m\|&\leq (\sqrt{k}+1) \varepsilon^\text{\scriptsize abs} + \varepsilon^\text{\scriptsize rel} \max\{\|G(\chi_{\II}^*u_{\ | \II}^{m})\|_2+\|u^m\|_{L^2(\II)},\|v^m\|_2+\|w^m\|_{L^2(\II)}\}\ , \\ 
\|r_D^m\|&\leq \varepsilon^\text{\scriptsize abs} + \varepsilon^\text{\scriptsize rel} \|\chi_\II G^\star \lambda^m+\mu^m\|_{L^2(\II)}\ ,
\end{aligned}$$
where $\varepsilon^\text{\scriptsize abs}>0$ is an absolute tolerance,
whose scale depends on the scale of the variable values, and
$\varepsilon^\text{\scriptsize rel} > 0$ is a relative tolerance, which
might be $\varepsilon^\text{\scriptsize rel}=10^{-3}$ or
$\varepsilon^\text{\scriptsize rel}=10^{-4}$. The factor $\sqrt{k}$
accounts for the fact that \eqref{eq:SPCk'} contains $k$ cutting plane
constraints. In addition, the absolute error $e^{m}$ in the primal
objective should be less than a chosen tolerance~$\varepsilon^{\text{\scriptsize pr}}>0$.

When the algorithm stops, we obtain $f(u_{\ |\II}^{m})-e^{m}$ as a
dual bound for the subproblem~\eqref{eq:SP} of the branch-and-bound
algorithm, and we can either proceed by calling the separation
algorithm again, in order to generate another violated cutting plane,
if possible, or by stopping the outer approximation algorithm. When
proceeding with the cutting plane algorithm, one has to solve another
parabolic optimal control problem of the form \eqref{eq:SPCk} with an
additional cutting plane $k+1$ by Algorithm~\ref{alg:admm}. The
performance of the algorithm can be improved by choosing the prior
solution $(u,v,\lambda,w,\mu)$ as initialization in
Step~\ref{it:step}, and setting the auxiliary variable to
$v_{k+1}=b-G(\chi_{\AA}^*u|_{ \AA})$ as well as the dual variable to
$\lambda_{k+1}=0$ for the new cutting plane, since the latter is violated by
$u$ for sure.

\subsection{Primal bounds}\label{sec:primal}

Another crucial ingredient in the branch-and-bound framework are
primal heuristics, \ie algorithms for computing good feasible
solutions of the original problem~\eqref{eq:optprob}, which yield
tight primal bounds. It is common to call such primal heuristics in
each subproblem, where the heuristic is often guided by the optimal
solution for the convexified problem being solved in this subproblem
for obtaining a dual bound. In our case, we can apply problem-specific
rounding strategies from the literature to the solution
of~\eqref{eq:SPCk'} found by the ADMM method, \eg the Dwell Time
Sum-up Rounding and Dwell Time Next Force Rounding algorithms~\cite{ZRS20} for the case of a minimum time span between two
switchings, and the Adaptive Maximum Dwell Rounding strategy~\cite{ZS20} for the case of an upper bound on the total number of
switchings.

Moreover, it is often possible to efficiently optimize a linear
objective function over the set~$C_{D,\Pi}$, as shown
in~\cite{partI}. We can benefit from this as follows. First, we define
an appropriate objective function based on the solution~$u$
of~\eqref{eq:SPCk'}. Second, we can use the resulting
minimizer~$v^\star\in C_{D,\Pi}$ and construct a control $u'\in D$
with $\Pi(u')=v^\star$. For the first task, one can consider the distance of
$u$ to $\tfrac{1}{2}$ over the intervals $I_i$ defining the local
averaging operators of the projection $\Pi$ and define the $i$-th
objective coefficient as
\begin{equation}\label{eq:obj}
  \int_{I_i} (\tfrac{1}{2}-u) \,\d t = \lambda(I_i)(\tfrac 12-\Pi(u)_i)\;.
\end{equation}
The intuition in this definition is that a bigger objective
coefficient, \ie a smaller average value of~$u$ on~$I_i$, will promote
a smaller entry~$v^\star_i$ in the minimizer~$v^\star$, and vice
versa. The minimizer~$v^\star$ will thus agree with~$\Pi(u)$ as much
as possible while guaranteeing~$v^\star\in C_{D,\Pi}$. In fact, if
$C_{D,\Pi}$ is a $0/1$-polytope, then the minimization problem
\begin{equation}\label{eq:track}
  \min_{v\in C_{D,\Pi}} ~~ \sum_{i=1}^M \lambda(I_i)~|v_i-\Pi(u)_i|
\end{equation}
can be reformulated as a linear optimization problem over $C_{D,\Pi}$,
which is equivalent to the one with the objective coefficients given
in~\eqref{eq:obj}. Moreover, if the intervals~$I_1,\dots,I_M$ agree
with the given discretization, the minimization
problem~\eqref{eq:track} is equivalent to the CIA~problem addressed
in~\cite{KJS11,JRS15}, which tracks the average of the relaxed
solution over the given temporal grid of the discretization while
respecting the considered switching constraints.
\begin{example}\label{ex:heur1}
  For~$D(\sigma)$, the set~$C_{D(\sigma),\Pi}$ is a $0/1$-polytope~by
  \cite[Thm.~3.8]{partI}, and any linear objective function can be
  optimized in linear time
  over~$C_{D(\sigma),\Pi}$~\cite{buchheim23}. The minimizer~$v^\star$
  can thus be guaranteed to be binary and it can be computed very
  efficiently, which even allows to choose as intervals
  $I_1,\ldots,I_M$ for the projection exactly the ones given by the
  currently used discretization in time. In this case, the minimizer
  $v^\star$ solves the CIA problem over $D(\sigma)$ and it is trivial
  to find a control~$u'$ with~$\Pi(u')=v^\star$. Indeed, on each
  interval~$I_i$, we can set~$u'$ constantly to~$v^\star_i$.  \qed
\end{example}

\begin{example}
  The set $C_{D(s),\Pi}$ of the minimum dwell time constraints is not
  necessarily a $0/1$-polytope, but one can optimize over
  $C_{D(s),\Pi}$ in~$O(M\sigma)$ time, and, by backtracking, one can
  reconstruct the corresponding solution $u'\in D(s)$ in $O(M\sigma)$ time;
  see \cite[Thm.~3.14]{partI}.  \qed
\end{example}
The implicit fixings of the control in a subproblem of the branch-and-bound algorithm can also be considered explicitly in the optimization over $C_{D,\Pi}$ by setting the corresponding objective coefficients in \eqref{eq:obj} to~$\infty$ and~$-\infty$, respectively. More precisely, one may use sufficiently large/small objective coefficients in this case.

In the above examples, a feasible control $u\in D$ can be computed quickly. Nevertheless, in order to obtain the corresponding primal bound, one needs to first calculate the resulting state~$y=S(u)$ and then to evaluate the objective function.

\section{Discretization error and adaptive refinement}\label{sec:numeric}

The dual bounds computed by the outer approximation algorithm
described in the previous section are safe bounds for~\eqref{eq:SPCk},
as long as we do not take discretization errors into account. However,
our objective is to solve~\eqref{eq:optprob} in function space. This
implies that we need to (a) estimate the discretization error
contained in these bounds and (b) devise a method to deal with
situations where the discretization-dependent dual bound allows to
prune a subproblem but the discretization-independent dual bound does
not, \ie where the current primal bound lies between the two dual
bounds. In the latter case, the only way out is the refinement of the
discretization.

In order to address the first task, we will estimate the a posteriori
error of the discretization with respect to the cost functional. We
use the dual weighted residual~(DWR) method, which has already
achieved good results in practice, and combine the results
from~\cite{MV07} and~\cite{VW08} to obtain an error analysis for the
suproblems~\eqref{eq:SPCk} arising in our branch-and-bound tree.
First, we describe the finite element discretization of the optimal
control problems arising in the branch-and-bound algorithm
(Section~\ref{sec:fem}). Then we discuss how to compute safe dual
bounds (Section~\ref{sec:numericdual}) as well as safe primal bounds
(Section~\ref{sec:numericprimal}). Finally, we describe our adaptive
refinement strategy (Section~\ref{sec:refine}).

\subsection{Finite element discretization}\label{sec:fem}

To solve problems of the form~\eqref{eq:SPCk} in practice, we need to discretize the PDE constraint given as 
\begin{equation}\label{eq:state}
\begin{aligned}
\langle \partial_t y, \varphi&\rangle_{L^2(0,T;H^{-1}(\Omega)),L^2(0,T;H_0^1(\Omega))}+(\nabla y,\nabla \varphi)_{L^2(0,T;L^2(\Omega))}+(y(0),\varphi(0))_{L^2(\Omega)}\\&=(\Psi(u),\varphi)_{L^2(0,T;L^2(\Omega))}+(y_0,\varphi(0))_{L^2(\Omega)}\qquad \forall \varphi\in W(0,T)
\end{aligned}
\end{equation}in its weak formulation, as well as the control function, so that we implicitly discretize the Lagrangian~$L\colon W(0,T)\times L^2(0,T)\times W(0,T) \times L^2(0,T)\times L^2(0,T)\times \R^k$ corresponding to~\eqref{eq:SPCk} given as 
$$\begin{aligned}L(y,u,p,\mu^+,\mu^-,\lambda)= ~ J(y,u)&-\langle \partial_t y, \varphi\rangle_{L^2(0,T;H^{-1}(\Omega)),L^2(0,T;H_0^1(\Omega))}-(\nabla y,\nabla p)_{L^2(0,T;L^2(\Omega))}\\
&-(y(0)-y_0,p(0))_{L^2(\Omega)}+(\Psi(u),p)_{L^2(0,T;L^2(\Omega))} \\
&+(\mu^+,u-1)_{L^2(0,T)}- (\mu^-,u)_{L^2(0,T)} + \lambda^\top(Gu-b)\;.
\end{aligned}$$
By calculating the derivative of $L$ w.r.t.~$y$ in arbitrary direction $\varphi\in W(0,T)$, as well as applying interval-wise integration by parts to the equation, we get the adjoint equation 
\begin{equation}\label{eq:adjoint}
\begin{aligned}
-&\langle y, \partial_t p\rangle_{L^2(0,T;H_0^1(\Omega)),L^2(0,T;H^{-1}(\Omega))}+(\nabla\varphi,\nabla p)_{L^2(0,T;L^2(\Omega))}+(\varphi(T),p(T))_{L^2(\Omega)}
\\
&\quad=(\varphi,y-y_\textup{d})_{L^2(0,T;L^2(\Omega))} \qquad \forall \varphi\in W(0,T)\;.
\end{aligned}
\end{equation}

We use a discontinuous Galerkin element method for the time discretization of the PDE constraint with piecewise constant functions. Let 
$$\bar{J}=\{0\}\cup J_1 \cup \cdots\cup J_{L-1}\cup J_L$$ be a partition of $[0,T]$ with half open subintervals $J_l=(t_{l-1},t_l]$ of size~$s_l=t_l-t_{l-1}$ with time points $0=t_0<t_1<\cdots<t_{L-1}<t_L=T$. Define $s:=\max_{l=1,\ldots,N} s_l$ as the maximal length of a subinterval.
The spatial discretization of the state equation uses a standard Galerkin method with piecewise linear and continuous functions, where the domain $\Omega$ is partitioned into disjoint subsets~$K_i$ of diameter~$h_i:=\max_{p,q\in K_i}\|p-q\|_2$ for $i=1,\ldots, R$, \ie $\overline{\Omega}=\cup_{i=1}^R \overline{K_i}$. For the one-dimensional domain $\Omega$ used in our experiments in Section \ref{sec:exp}, this means that we subdivide~$\Omega$ into $R$ disjoint intervals of length $h_i$.  
Set $h:=\max_{i=1,\ldots,R} h_i$ and $\mathcal{K}_h=K_1\cup \cdots\cup K_R$. We define the  finite element space 
$$V_{h}:=\{v\in C(\bar{\Omega})\cap H_0^1(\Omega)\colon v|_K\in P_1(K),\ K\in \mathcal{K}_h\}$$ and associate with each time point $t_l$ a partition $\mathcal{K}_h^l$ of $\Omega$ and a corresponding finite element space $V_h^l\subset H_0^1(\Omega)$ which is used as spatial trial and test space in the time interval $J_l$. Denote by $P_0(J_l,V_h^l)$ the space of constant functions on $J_l$ with values in $V_h^l$. Then we use as a trial and test space for the state equation in~\eqref{eq:optprob} the space
$$X_{s,h}=\{v_{sh}\in L^2(I,L^2(\Omega))\colon v_{sh}|_{J_l}\in P_0(J_l,V_h^l),\ l=1,\ldots,L\}\;.$$ 
By introducing the notation 
$$\begin{aligned}
&y_{sh,l}^+=\lim\limits_{t\searrow 0}y_{sh}(t_l+t), \\
&y_{sh,l}^-=\lim\limits_{t\searrow 0} 
y_{sh}(t_l-t)=y_{sh}(t_l),\; \mbox{and } \\ 
&[y_{sh}]_l:=y_{sh,l}^+ - y_{sh,l}^-\end{aligned}$$ 
for the discontinuities of functions $y_{sh}\in X_{sh}$ in time, 
we obtain the following fully discretized state equation: Find for a given $u_{sh}\in  L^2(0,T)$ a state $y_{sh} \in X_{s,h}$ such that 
\begin{equation}\label{eq:discstate}\begin{aligned}\sum_{l=1}^{L} \langle\partial_t y_{sh}, \varphi\rangle_{J_l}+\sum_{l=1}^{L} &(\nabla y_{sh}, \nabla \varphi)_{J_l}+\sum_{l=1}^{L-1} ([y_{sh}]_l,\varphi_l^+)+(y_{sh,0}^+,\varphi_0^+)\\&=\sum_{l=1}^{L} (\Psi(u_{sh}),\varphi)_{J_l}+ (y_0,\varphi_0^+)\quad \forall \varphi\in X_{s,h}\;,\end{aligned}\end{equation}
where $\langle \cdot,\cdot \rangle_{J_l}:=\langle \cdot,\cdot\rangle_{L^2(J_l;H^{-1}(\Omega)),L^2(J_l;H_0^1(\Omega))}$, $(\cdot,\cdot)_{J_l}:=(\cdot,\cdot)_{L^2(J_l;\Omega)}$, and $(\cdot,\cdot):=(\cdot,\cdot)_{L^2(\Omega)}$. Note that, for piecewise constant states $y_{sh}\in X_{s,h}$, the term $\langle \partial_t y_{sh}, \varphi\rangle_{J_l}$ in \eqref{eq:discstate} is zero for all $l=1,\ldots,L$. 
We denote the discrete solution operator by $S_{sh}:L^2(0,T)\to X_{s,h}$, \ie $y_{sh}=S_{sh}(u_{sh})$ satisfies the discrete state equation~\eqref{eq:discstate} for $u_{sh}\in L^2(0,T)$.
Finally, we use piecewise constant functions for the temporal discretization of the control function on the same temporal grid as for the state equation, \ie we use the space $$Q_\rho=\{w\in L^2(0,T): w|_{J_l}=w_l \mbox{ for all } l=1,\ldots,L\}\;.$$

Altogether, the discretization of~\eqref{eq:SPCk} is given as
\begin{equation}\tag{SPC\text{$_{k\rho}$}}\label{eq:SPCkrho}	\left\{\;		    
\begin{aligned}
\mbox{min }~ &J(y_\rho,u_\rho)\\ 
\text{s.t. }~ & \begin{aligned}\sum_{l=1}^{L} \langle\partial_t y_\rho, &\varphi\rangle_{J_l}+\sum_{l=1}^{L} (\nabla y_\rho, \nabla \varphi)_{J_l}+\sum_{l=1}^{L-1} ([y_\rho]_l,\varphi_l^+)\\&=\sum_{l=1}^{L} (\Psi(u_\rho),\varphi)_{J_l}+(y_0-y_{\rho,0}^+,\varphi_0^+) \quad \forall \varphi\in X_{s,h}\;,\end{aligned} \\
& 0 \leq u_\rho|_{J_l} \leq 1 \quad \mbox{a.e. in } J_l \mbox{ for all } l=1,\ldots,L\;,\\
&  Gu_\rho \leq b \quad \mbox{in } \R^k\;.
\end{aligned}\right.
\end{equation} 
Moreover, the Lagrangian $\tilde{L}\colon X_{s,h}\times Q_\rho\times X_{s,h}\times Q_\rho\times Q_\rho\times \R^k  \to \R$ associated with~\eqref{eq:SPCkrho} results as 
$$\begin{aligned}\tilde{L}(y_\rho,u_\rho,p_\rho,&\mu_\rho^+,\mu_\rho^-,\lambda_\rho)=J(y_\rho,u_\rho)-\sum_{l=1}^{L} \langle\partial_t y_\rho,p_\rho\rangle_{J_l}-\sum_{l=1}^{L} (\nabla y_\rho,\nabla p_\rho)_{J_l}\\&-\sum_{l=1}^{L-1} ([y_\rho]_l,p_{\rho,l}^+)-(y_{\rho,0}^+-y_0,p_{\rho,0}^+)+\sum_{l=1}^{L} (\Psi(u_\rho),p_\rho)_{J_l} \\ 
&+ \sum_{l=1}^{L} \lambda(J_l)(\mu_\rho^+|_{J_l})^\top (u_\rho|_{J_l}-1)- \sum_{l=1}^{L} \lambda(J_l)(\mu_\rho^-|_{J_l})^\top u_\rho|_{J_l} + \lambda_\rho^\top(Gu_\rho-b)\;.
\end{aligned}
$$ 
Based on this, we will devise a posteriori error estimates for both primal and dual bounds in the next sections.

\subsection{A posteriori discretization error of dual bounds\label{sec:numericdual}}

Following the ideas of \cite{MV07,VW08}, we now derive an a posteriori estimate for the error term $J(y,u)-J(y_\rho,u_\rho)$, where $(y,u)\in W(0,T)\times L^2(0,T)$ denotes the optimizer of~\eqref{eq:SPCk} and $(y_\rho,u_\rho)\in X_{s,h}\times Q_\rho$ the one of~\eqref{eq:SPCkrho}.
For this, let us write down the first-order optimality conditions of~\eqref{eq:SPCk} and~\eqref{eq:SPCkrho} by means of the Lagrangian $L$ and $\tilde{L}$, respectively. If $(y,u)\in W(0,T)\times L^2(0,T)$ is optimal for~\eqref{eq:SPCk}, then there exist multipliers~$p\in W(0,T)$, $\mu^+\in L^2(0,T)$, $\mu^-\in L^2(0,T)$ and $\lambda \in \R^k$ such that for $\chi:=(y,u,p,\mu^+,\mu^-,\lambda)$, we have
\begin{subequations}
	\begin{gather}
	L^\prime(\chi)(\delta y,\delta u,\delta p)=0 \quad \forall (\delta y,\delta u,\delta p) \in W(0,T)\times L^2(0,T)\times W(0,T) \label{eq:1a} \\
	\mu^+\geq 0,\quad \mu^+(u-1)=0,\quad u\leq 1 \quad \mbox{a.e. in } (0,T) \label{eq:1b}\\
	\mu^-\geq 0,\quad\quad \mu^-u=0,\quad u\geq 0 \quad \mbox{a.e. in } (0,T) \label{eq:1c}\\ 
	\lambda\geq 0,\quad \lambda^\top (Gu-b)=0, \quad Gu\leq b  \label{eq:1d}
	\end{gather}
\end{subequations}
Analogously, if $(y_\rho,u_\rho)\in X_{s,h}\times Q_\rho$ is optimal for~\eqref{eq:SPCkrho}, then there exist $p_\rho\in X_{s,h}$, $\mu_\rho^+\in Q_\rho$, $\mu_\rho^-\in Q_\rho$ and $\lambda_\rho\in \R^k$ such that for $\chi_\rho:=(y_\rho,u_\rho,p_\rho,\mu_\rho^+,\mu_\rho^-,\lambda_\rho)$ we have
\begin{subequations}
	\begin{gather}
	\tilde{L}^\prime(\chi_\rho)(\delta y,\delta u,\delta p)=0 \quad \forall (\delta y,\delta u,\delta p) \in X_{s,h}\times Q_\rho \times X_{s,h} \label{eq:2a} \\
	\mu_\rho^+|_{J_l}\geq 0,\quad \mu_\rho^+|_{J_l}(u_\rho|_{J_l}-1)=0,\quad u_\rho|_{J_l}\leq 1 \quad \forall l=1,\ldots,L \label{eq:2b}\\
	\mu_\rho^-|_{J_l}\geq 0,\quad\quad \mu_\rho^-|_{J_l}u_\rho|_{J_l}=0,\quad u_\rho|_{J_l} \geq 0 \quad \forall l=1,\ldots,L \label{eq:2c}\\ 
	\lambda_\rho \geq0,\quad \lambda_\rho^\top (Gu_\rho-b)=0,\quad Gu_\rho \leq b \label{eq:2d}
	\end{gather}
\end{subequations}
Using the shorthand notation
$$\begin{aligned}
\mathcal{Y}&=W(0,T)\times L^2(0,T)\times W(0,T)\times L^2(0,T) \times L^2(0,T) \times  \R^k  \mbox{ and } \\
\mathcal{Y}_\rho &= X_{s,h}\times Q_\rho\times X_{s,h}\times Q_\rho\times Q_\rho\times \R^k\;,
\end{aligned}$$ 
we have everything at hand to combine the results from \cite{MV07} and~\cite{VW08} to obtain the following a posteriori discretization error estimation. 
\begin{theorem}\label{thm:E}
	Let $\chi=(y,u,p,\mu^+,\mu^-,\lambda)\in \mathcal{Y}$ fulfill the first-order necessary optimality conditions~\eqref{eq:1a}--\eqref{eq:1d} for~\eqref{eq:SPCk} and $\chi_\rho=(y_\rho,u_\rho,p_\rho,\mu_\rho^+,\mu_\rho^-,\lambda_\rho)\in~\mathcal{Y}_\rho$ the first-order necessary optimality conditions~\eqref{eq:2a}--\eqref{eq:2d} for the discretized problem~\eqref{eq:SPCkrho}.
	Then
	\begin{align*}
	J(y,u)-J(y_\rho,u_\rho)&=\tfrac{1}{2}\tilde{L}^\prime(\chi)(\chi-\chi_\rho)+\tfrac{1}{2}\tilde{L}^\prime(\chi_\rho)(\chi-\chi_\rho) \\ 
	&=\tfrac{1}{2}\Big(\, \tilde{L}_y^\prime(\chi_\rho)(y-y_\rho)+\tilde{L}_p^\prime(\chi_\rho)(p-p_\rho)+\tilde{L}_{u}^\prime(\chi_\rho)(u-u_\rho)\\ &~\qquad+\tilde{L}_{\mu^+}^\prime(\chi)(\mu^+-\mu_\rho^+)+\tilde{L}_{\mu^-}^\prime(\chi)(\mu^--\mu_\rho^-)+\tilde{L}_{\lambda}^\prime(\chi)(\lambda-\lambda_\rho)\\
	&~\qquad+\tilde{L}_{\mu^+}^\prime(\chi_\rho)(\mu^+-\mu_\rho^+)+\tilde{L}_{\mu^-}^\prime(\chi_\rho)(\mu^--\mu_\rho^-)+\tilde{L}_{\lambda}^\prime(\chi_\rho)(\lambda-\lambda_\rho) \,\Big)\;.
	\end{align*}
\end{theorem}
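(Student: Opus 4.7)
The plan is to follow the dual-weighted-residual methodology of~\cite{MV07,VW08}. The two main ingredients are (i) the quadratic structure of the discrete Lagrangian~$\tilde{L}$, which makes a midpoint Taylor expansion exact, and (ii) the continuous first-order conditions~\eqref{eq:1a}, which kill three of the six directional derivatives obtained at~$\chi$.

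First I would observe that $\tilde{L}$ is a polynomial of degree at most two in its argument~$\chi$: the tracking and Tikhonov contributions of~$J$ are quadratic in~$(y,u)$, while every coupling term in~$\tilde{L}$ (the weak state equation, the temporal jump and initial-value terms, the control-multiplier products, and the cutting-plane term~$\lambda_\rho^\top G u_\rho$) is bilinear between two distinct components of~$\chi$. Therefore the Taylor expansion of $\tilde{L}$ about the midpoint $\tfrac{1}{2}(\chi+\chi_\rho)$ truncates at second order with vanishing remainder, which is equivalent to the trapezoidal identity
\[
\tilde{L}(\chi)-\tilde{L}(\chi_\rho)=\tfrac{1}{2}\tilde{L}'(\chi)(\chi-\chi_\rho)+\tfrac{1}{2}\tilde{L}'(\chi_\rho)(\chi-\chi_\rho)\;.
\]

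Second, I would identify the left-hand side with $J(y,u)-J(y_\rho,u_\rho)$. For $\chi_\rho$ this is immediate from the discrete state equation~\eqref{eq:discstate}, which annihilates all PDE contributions in $\tilde{L}$, and from the discrete complementarity conditions~\eqref{eq:2b}--\eqref{eq:2d}. For the continuous $\chi$, the crucial observation is that $y,p\in W(0,T)$ imply $[y]_l=0$ for $l\ge 1$ and $y_0^+=y_0$, so that each piecewise sum in $\tilde{L}$ collapses to the corresponding integral over $(0,T)$ and $\tilde{L}(\chi)$ reduces term by term to the continuous Lagrangian $L(\chi)$. The latter equals $J(y,u)$ by the weak state equation~\eqref{eq:state} together with~\eqref{eq:1b}--\eqref{eq:1d}. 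Combined with the expansion above, this already gives the first equality of the theorem.

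Third, I would show that the directional derivatives of $\tilde{L}$ at the continuous stationary point $\chi$ along the $(y,u,p)$-components of $\chi-\chi_\rho$ vanish, so that only the multiplier contributions survive in $\tilde{L}'(\chi)(\chi-\chi_\rho)$. For the $u$-direction this is immediate from the stationarity in~$u$ contained in~\eqref{eq:1a}, since $\tilde{L}_u'(\chi)$ coincides with $L_u'(\chi)$ on $L^2(0,T)$ and $u-u_\rho$ is an admissible test function there. For the $y$- and $p$-directions the argument is more delicate, because the differences $y-y_\rho$ and $p-p_\rho$ contain discontinuous Galerkin components. I would carry out interval-wise integration by parts on each $J_l$, exploiting the continuity of the continuous factor (namely $p$ in $\tilde{L}'_y(\chi)$ and $y$ in $\tilde{L}'_p(\chi)$) across the time nodes $t_l$: the resulting boundary contributions at the interior nodes combine with the explicit jump terms in $\tilde{L}$ to cancel pairwise, and the remaining volume expression is precisely the continuous adjoint equation~\eqref{eq:adjoint} (respectively the continuous state equation~\eqref{eq:state}) tested against the discontinuous direction. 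Both are zero by continuous optimality, whence $\tilde{L}'_y(\chi)(y-y_\rho)=\tilde{L}'_p(\chi)(p-p_\rho)=0$. Substituting this into the midpoint identity and expanding $\tilde{L}'(\chi_\rho)(\chi-\chi_\rho)$ component-wise yields the claimed nine-term formula.

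The main obstacle is the interval-wise integration-by-parts argument in the third step: one must track carefully which boundary, jump and initial contributions cancel, and justify that the continuous optimality conditions~\eqref{eq:1a} can be invoked even though the test directions $y-y_\rho$ and $p-p_\rho$ live in $W(0,T)+X_{s,h}$ rather than in $W(0,T)$. Once the cancellations are verified, the remaining steps are bookkeeping.
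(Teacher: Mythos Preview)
Your proposal is correct and follows essentially the same approach as the paper: both exploit the quadratic structure of~$\tilde L$ to make the trapezoidal identity exact, identify $\tilde L(\chi)=L(\chi)=J(y,u)$ and $\tilde L(\chi_\rho)=J(y_\rho,u_\rho)$ via continuity in time and the respective optimality systems, and then eliminate the $(y,u,p)$-components of $\tilde L'(\chi)(\chi-\chi_\rho)$ by combining interval-wise integration by parts with the continuous state and adjoint equations (the paper handles the $X_{s,h}$-part of the test direction via a density argument for $W(0,T)$ in $L^2(0,T;H_0^1(\Omega))$, which is precisely the justification you flag as the main obstacle).
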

\begin{proof}
	The main arguments of the following proof are taken from the proofs of \cite[Thm.~4.1]{MV07} and \cite[Thm.~4.2]{VW08}. From the first-order optimality system~\eqref{eq:1a}--\eqref{eq:1d} of $\chi\in \cal Y$ for~\eqref{eq:SPCk} we obtain $J(y,u)=L(\chi)$. Analogously, the first-order conditions~\eqref{eq:2a}--\eqref{eq:2d} of $\chi_\rho \in \mathcal{Y}_\rho$ for~\eqref{eq:SPCkrho} lead to $J(y_\rho,u_\rho)=\tilde{L}(\chi_\rho)$.
	Moreover, it holds $L(\chi)=\tilde{L}(\chi)$ since the continuous embedding $W(0,T)\embed C([0,T];L^2(\Omega))$ \cite[Prop.~23.23]{Z90} guarantees $y\in W(0,T)$ to be continuous in time such that the additional jump terms in $\tilde{L}$ compared to $L$ vanish. We thus obtain 
	$$J(y,u)-J(y_\rho,u_\rho)=\tilde{L}(\chi)-\tilde{L}(\chi_\rho)=\int_{0}^1 \tilde{L}^\prime(\chi_\rho+s(\chi-\chi_\rho))(\chi-\chi_\rho) \,\d s\;.$$
	Evaluation of the integral by the trapezoidal rule leads to 
	\begin{equation}\label{eq:*}\tilde{L}(\chi)-\tilde{L}(\chi_\rho)=\tfrac{1}{2}\tilde{L}^\prime(\chi)(\chi-\chi_\rho)+\tfrac{1}{2}\tilde{L}^\prime(\chi_\rho)(\chi-\chi_\rho)+R\end{equation}
	with the residual
	$$R= \tfrac{1}{2} \int_{0}^1  \tilde{L}^{\prime\prime\prime}(\chi + \zeta(\chi - \chi_\rho))(\chi - \chi_\rho,\chi - \chi_\rho,\chi - \chi_\rho)\zeta(\zeta - 1)\,\d\zeta\;.$$
	Since the PDE contained in $\eqref{eq:SPCk}$ as well as the control constraints in $u$ are linear, and the objective is quadratic in $y$ and $u$, respectively, we have $R=0$.
	
	We now have a closer look at the different error terms arising in~\eqref{eq:*}. First, we have
	\begin{equation*}\label{eq:Ea}
	\tilde{L}^\prime(\chi)(\chi-\chi_\rho)=\tilde{L}_{\mu^+}^\prime(\chi)(\mu^+-\mu_\rho^+)+\tilde{L}_{\mu^-}^\prime(\chi)(\mu^--\mu_\rho^-)+\tilde{L}_{\lambda}^\prime(\chi)(\lambda-\lambda_\rho)\;,
	\end{equation*}
	because the other terms are zero thanks to the condition~\eqref{eq:1a}, which can be seen as follows: 
	since $y\in W(0,T)$ is continuous in time due to $W(0,T)\embed C([0,T];L^2(\Omega))$ by \cite[Prop.~23.23]{Z90}, the additional terms in $\tilde{L}_y^\prime$ compared to ${L}_y^\prime$ and $\tilde{L}_p^\prime$ compared to ${L}_p^\prime$, respectively, vanish, so that~\eqref{eq:1a} immediately yields
	$\tilde{L}_y^\prime(\chi)(y)=0$ and $\tilde{L}_p^\prime(\chi)(p)=0$. Moreover, the continuity of $y$ in time implies that 
	$\tilde{L}_p^\prime(\chi)(p_\rho)=0$ can equivalently be expressed as 
	$$\sum_{l=1}^{L} \langle\partial_t y, p_\rho\rangle_{J_l}+\sum_{l=1}^{L} (\nabla y, \nabla p_\rho)_{J_l}+(y_0^+,p_{\rho,0}^+)=\sum_{l=1}^{L} (\Psi(u),p_\rho)_{J_l}+(y_0,p_{\rho,0}^+)\;.$$
	For the continuous state $y$, the state equation \eqref{eq:state} implies that $(\varphi,y(0))=(\varphi,y_0)$ holds for all $\varphi\in L^2(\Omega)$, so that the term $(y_0^+,p_{\rho,0}^+)$ containing $y(0)=y_0^+ $ cancels out with $(y_0,p_{\rho,0}^+)$, as $p_{\rho,0}^+\in L^2(\Omega) ,$ and it remains to ensure  
	$$	\begin{aligned}
	\langle\partial_t y, p_\rho\rangle_{L^2(0,T;H^{-1}(\Omega)),L^2(J_l;H_0^1(\Omega))}+(\nabla y,\nabla p_\rho)_{L^2(0,T;L^2(\Omega))}=(\Psi(u),p_\rho)_{L^2(0,T;L^2(\Omega))}\;. \end{aligned}$$
	Again from the continuous state equation \eqref{eq:state}, the latter equation is satisfied by $y$ such that we obtain $\tilde{L}_p^\prime(\chi)(p_\rho)=0$, as desired.
	It remains to prove $\tilde{L}_y^\prime(\chi)(y_\rho)=0$. To this end, note that $p\in W(0,T)$ is continuous with respect to time by \cite[Prop.~23.23]{Z90}, so that we can rewrite $\tilde{L}_y^\prime(\chi)(y_\rho)=0$ after interval-wise integration by parts in $W(0,T)$ \cite{GGZ75} as 
	\begin{equation*}
	\begin{aligned}
	-\sum_{l=1}^L \langle \partial_t p, y_\rho\rangle_{J_l}+\sum_{l=1}^L(\nabla y_\rho,\nabla p)_{J_l}+(y_{\rho,L}^-,p_{L}^-)
	=\sum_{l=1}^L (y_\rho,y-y_\textup{d})_{J_l}\;.
	\end{aligned}
	\end{equation*}
	Using $p_L^-=p(T)=0$ for the adjoint $p\in W(0,T)$, the above equation becomes 
	\begin{equation*}
		\begin{aligned}
			-\sum_{l=1}^L \langle \partial_t p,y_\rho\rangle_{J_l}+\sum_{l=1}^L(\nabla y_\rho,\nabla p)_{J_l}
			=\sum_{l=1}^L (y_\rho,y-y_\textup{d})_{J_l}\;.
		\end{aligned}
	\end{equation*}
	By the adjoint equation~\eqref{eq:adjoint} and the density of $W(0,T)$ in $L^2(0,T;H_0^1(\Omega))$, the equation is satisfied by $y_\rho\in L^2(0,T;H_0^1(\Omega))$. We thus get $\tilde{L}_y^\prime(\chi)(y_\rho)=0$. 
	Finally,~\eqref{eq:1a} directly yields $\tilde{L}_u^\prime(\chi)(u-u_\rho)=0$ because of~$(u-u_\rho)\in L^2(0,T)$. 
	The second term in \eqref{eq:*} is given as
	\begin{equation*}\label{eq:Eb}
	\begin{aligned}
	\tilde{L}^\prime(\chi_\rho)(\chi-\chi_\rho)=~&\tilde{L}_y^\prime(\chi_\rho)(y-y_\rho)+\tilde{L}_p^\prime(\chi_\rho)(p-p_\rho)+\tilde{L}_{u}^\prime(\chi_\rho)(u-u_\rho)\\&+\tilde{L}_{\mu^+}^\prime(\chi_\rho)(\mu^+-\mu_\rho^+)+\tilde{L}_{\mu^-}^\prime(\chi_\rho)(\mu^--\mu_\rho^-)+\tilde{L}_{\lambda}^\prime(\chi_\rho)(\lambda-\lambda_\rho)\;,
	\end{aligned}
	\end{equation*}
	which completes the proof.
\end{proof}
We need to further specify the estimation of the a posteriori error given in Theorem~\ref{thm:E}, since it contains the unknown solution $\chi\in \mathcal{Y}$. A common approach in the context of the DWR method is to use  higher-order approximations, which work satisfactorily in practice; see, \eg \cite{BR01}. 
Since our control function can only vary over time and the novelty of
our approach lies primarily in the determination of the finitely many
switching points, 
we assume for
simplicity that there is no error caused by the spatial discretization of the state equation to keep the discussion concise. 
Thus, we only use a higher-order interpolation in time. 
For that, we introduce the piecewise linear interpolation operator $I_s^{(1)}$ in time and map the computed solutions to the approximations of the interpolation errors
$$ y-y_\rho\approx I_s^{(1)} y_\rho -y_\rho \mbox{~ and ~} p-p_\rho\approx I_s^{(1)} p_\rho -p_\rho.$$
Then we obtain the approximations
$$\begin{aligned}
\tilde{L}_y^\prime(\chi_\rho)(y-y_\rho)&\approx \tilde{L}_y^\prime(\chi_\rho)(I_s^{(1)} y_\rho -y_\rho)\;, \\
\tilde{L}_p^\prime(\chi_\rho)(p-p_\rho)&\approx \tilde{L}_p^\prime(\chi_\rho)(I_s^{(1)} p_\rho -p_\rho)\;.
\end{aligned}$$
Since the space of the Lagrange multiplier $\lambda$ of the cutting planes is finite-dimensional and thus not implicitly discretized by the discretization of the control space, we may choose $\lambda_\rho$ as higher-order interpolating and consequently neglect the error terms in $\lambda$, \ie  $$\tilde{L}_{\lambda}^\prime(\chi)(\lambda-\lambda_\rho) +\tilde{L}_{\lambda}^\prime(\chi_\rho)(\lambda-\lambda_\rho) \approx 0.$$
Finally, as mentioned in \cite{VW08}, the control $u$ typically does
not possess sufficient smoothness, due to the box and cutting plane
constraints. We thus suggest, as in \cite{VW08}, based on the gradient equation 
$$ L_u^\prime(\chi)=\alpha(u-\tfrac{1}{2})+\Psi^\star p + \mu^+-\mu^- +G^\star \lambda=0$$ and the resulting projection formula 
$$u=\min\{\max\{-\tfrac{1}{\alpha}(\Psi^\star p+ G^\star \lambda)+\tfrac{1}{2},0\},1\}\;,$$ the choice of 
$$\tilde{u}=\min\{\max\{-\tfrac{1}{\alpha}(\Psi^\star I_s^{(1)}p_\rho+ G^\star \lambda_\rho) +\tfrac{1}{2},0\},1\}$$ and $$\tilde{\mu}=-\alpha(\tilde{u}-\tfrac{1}{2})-\Psi^\star I_s^{(1)}p_\rho -G^\star \lambda_\rho=:\tilde{\mu}^+-\tilde{\mu}^-$$ with $\tilde{\mu}^+,\tilde{\mu}^-\geq 0$ a.e. on $(0,T)$. The computable error estimate is thus given as 
\begin{equation}\tag{E\text{$_\eta$}} \label{eq:Eeta}
\begin{array}{rcl}
  \eta & := & J(y,u)-J(y_\rho,u_\rho)\\
  & \approx &\tfrac{1}{2}\Big[
  \tilde{L}_y^\prime(\chi_\rho)(I_s^{(1)} y_\rho -y_\rho) +\
  \tilde{L}_p^\prime(\chi_\rho)(I_s^{(1)} p_\rho -p_\rho)
  +\tilde{L}_{u}^\prime(\chi_\rho)(\tilde{u}-u_\rho)\\
  &&\quad~+\tilde{L}_{\mu^+}^\prime(\tilde{\chi})(\tilde{\mu}^+-\mu_\rho^+)+\tilde{L}_{\mu^-}^\prime(\tilde{\chi})(\tilde{\mu}^--\mu_\rho^-)\\
  &&\quad~+\tilde{L}_{\mu^+}^\prime(\chi_\rho)(\tilde{\mu}^+-\mu_\rho^+)+\tilde{L}_{\mu^-}^\prime(\chi_\rho)(\tilde{\mu}^--\mu_\rho^-)
  \Big]
\end{array}\end{equation}
with
$\tilde{\chi}=(I_s^{(1)}y_\rho,\tilde{u},I_s^{(1)}p_\rho,\tilde{\mu}^+,\tilde{\mu}^-,\lambda_\rho)$.

  As in \cite{MV07}, one could split the error $J(y,u)-J(y_\rho,u_\rho)$
  into (a) the error caused by the semi-discretization of the state equation
  in time, (b) the error caused by the additional spatial discretization
  of the state equation, which we would consider as zero again, and
  (c) the error caused by the control space discretization. This
  would allow to choose different time grids for the state equation
  and the control space, where the former has
  to be at least as fine as the latter
  \cite{MV07}. Since we are mostly interested in the combinatorial switching constraints, so that our focus is on the controls, we decided not to split the error and thus not to consider a finer temporal grid for the state.

  As discussed in Section~\ref{sec:branching}, the given fixings may
determine parts of the switching pattern of~$u$
in~\eqref{eq:SPCk}. In this case, we need to calculate the a posteriori
error~\eqref{eq:Eeta} only on the unfixed control variables
$u|_\II$, as well as on the Lagrange multipliers~$\mu^+,\mu^-\in
L^2(\II)$ corresponding to the box constraints, since we explicitly eliminated the fixed control variables from the problem \eqref{eq:SPCk}. Then, it is clear that the terms $\tilde{L}_{u}^\prime(\chi_\rho)(\tilde{u}-u_\rho)$, $\tilde{L}_{\mu^+}^\prime(\tilde{\chi})(\tilde{\mu}^+-\mu_\rho^+)$, $\tilde{L}_{\mu^-}^\prime(\tilde{\chi})(\tilde{\mu}^--\mu_\rho^-)$, $\tilde{L}_{\mu^+}^\prime(\chi_\rho)(\tilde{\mu}^+-\mu_\rho^+)$, and $\tilde{L}_{\mu^-}^\prime(\chi_\rho)(\tilde{\mu}^--\mu_\rho^-)$ in the error estimator \eqref{eq:Eeta} tend to zero for an increasing number of fixings satisfying the assumptions of Theorem \ref{thm:convobj}, since the non-fixed part of the time horizon vanishes in this case. On the other hand, the error terms $\tilde{L}_y^\prime(\chi_\rho)(I_s^{(1)} y_\rho -y_\rho)$ and
$\tilde{L}_p^\prime(\chi_\rho)(I_s^{(1)} p_\rho -p_\rho)$ reflect the error $J(Su_\rho,u_\rho)-J(y_\rho,u_\rho)$ in the cost functional caused by calculating the discretized state $y_\rho=S_{sh}(u_\rho)$ rather than $Su_\rho$. This error is also taken into account in the primal bounds throughout our branch-and-bound scheme; see Section~\ref{sec:numericprimal} below. 

In summary, in order to numerically compute a safe dual bound for the
subproblem~\eqref{eq:SP}, we first calculate a solution~$u_\rho$  of
the fully discretized problem~\eqref{eq:SPCkrho} with objective value~$J(y_\rho,u_\rho)$ by means of the ADMM method, as described in Section~\ref{sec:dual}. Second, we use $J(y_\rho,u_\rho)-e+\eta$ as a dual bound, where $e$ denotes the absolute error in the primal objective caused by the ADMM algorithm, see~\eqref{eq:errorADMM}, and $\eta$ the a posteriori error of the discretization of~\eqref{eq:SPCk}; compare~\eqref{eq:Eeta}.

\subsection{A posteriori discretization error of primal bounds}\label{sec:numericprimal}

Every feasible solution~$u\in D$, \eg obtained by applying
primal heuristics as described in Section~\ref{sec:primal}, leads to a 
primal bound~$J(Su,u)$ for the original
problem~\eqref{eq:optprob}. However, this bound is again
subject to discretization errors. To estimate the latter, we first
need to solve the fully discretized equation~\eqref{eq:discstate} to
get a state $y_{sh}=S_{sh}(u)$ and then to estimate the a posteriori
error $\nu:=J(Su,u)-J(S_{sh}u,u)$ in the cost functional. For the latter, we can
again use the DWR method, which was originally invented to estimate
the error in the cost function caused by the discretization of the
state equation, see, \eg \cite{BR01}.
We may directly apply \cite[Prop.~2.4]{BR01} to get the approximation 
$$\begin{aligned}
  \nu \approx \ p_y(y_{sh},u,p_{sh})(p-p_{sh}):= &-\sum_{l=1}^L (\nabla y_{sh},p-p_{sh})_{J_l}-\sum_{l=1}^{L-1} ([y_{sh}]_l,p_l^+-p_{sh,l}^+) \\
&-(y_{sh,0}^+-y_0,p_0^+-p_{sh,0}^+)+\sum_{l=1}^L (\Psi(u),p-p_{sh})_{J_l}\;,
\end{aligned}$$
with $\langle \partial_t y_{sh}, p_{sh}\rangle_{J_l}=0$ for $l=1,\ldots,L$, where $p=S^*(y)$ and $p_{sh}=S_{sh}^*(y_{sh})$ denotes the adjoint corresponding to the state $y=S(u)$ and $y_{sh}=S_{sh}(u)$, respectively. Assuming again that there is no error caused by the spatial discretization, we may use the piecewise linear interpolation $I_s^{(1)}p_{sh}$  of $p_{sh}$ in time to obtain the computable a posteriori error 
\begin{equation*}
\nu\approx  p_y(y_{sh},u,p_{sh})(I_s^{(1)}p_{sh}-p_{sh})\;.
\end{equation*}
Then~$J(S_{sh}u,u)+\nu$ is a safe primal bound for~\eqref{eq:optprob}.

\subsection{Adaptive refinement strategy}\label{sec:refine}

The central feature of our branch-and-bound algorithm is the approximate
computation of an optimal solution for~\eqref{eq:optprob} in function
space. In the limit, this solution does \emph{not} depend on any predetermined
discretization of the time horizon. However, in practice, we need to
discretize our subproblems~\eqref{eq:SP} in order to numerically
compute dual bounds, as described in Section~\ref{sec:numericdual}. The main idea of our approach is to use a
coarse temporal grid at the beginning, when the branchings have not
yet determined a significant part of the switching structure, and then
to refine the subintervals (only) if necessary.

More specifically, as long as the time-mesh dependent dual bound
$J(y_\rho,u_\rho)-e$ for~\eqref{eq:SP} is below the best known primal
bound, we proceed with the given discretization. Otherwise, we cannot
find a better solution for~\eqref{eq:SP} for the given
discretization. We then must decide whether better solutions
for~\eqref{eq:SP} may potentially exist when using a finer temporal
grid. This is the case if and only if the time-mesh independent bound
$J(y_\rho,u_\rho)-e+\eta$ is still below the primal bound $PB$. We thus have to refine the grid whenever
$$J(y_\rho,u_\rho)-e+\eta \leq PB < J(y_\rho,u_\rho)-e\;.$$ If even
$J(y_\rho,u_\rho)-e+\eta$ exceeds the primal bound, we can prune the subproblem. Indeed, in this case we cannot
find better solutions for the subproblem even in function space.

The adaptive refinement of the temporal grid is guided by the a posteriori error estimation of the discretization proposed in Section \ref{sec:numericdual}. 
The error estimator \eqref{eq:Eeta} can be easily split into its contribution on each subinterval~$J_l$, \ie 
$$\eta=\sum_{l=1}^L \eta_l,$$
with the local error contributions $\eta_l$ on $J_l$ for $l=1,\ldots,L.$  Note that this splitting is directly possible since we assumed that there is no error caused by the spatial discretization of the state equation, and thus no further localization on each spatial mesh is needed. A popular strategy for mesh adaptation is to order the subintervals according to the absolute values of their error indicators in descending order, \ie to find a permutation $\varrho$ of~$\{1,\ldots,L\}$ such that~$|\eta_{\varrho(1)}| \geq \cdots \geq |\eta_{\varrho(L)}|$,
and then to refine the subintervals which make up a certain percentage $\gamma>0$ of the total absolute error, \ie the subintervals $J_{\varrho(1)},\ldots,J_{\varrho(L_\gamma)}$ with
$$L_\gamma:=\min \Big\{j\in\{1,\ldots,L\}\colon \sum_{l=1}^j |\eta_{\varrho(l)}| > \gamma \sum_{l=1}^L |\eta_l|\Big\}\;.$$

The resulting subproblem \eqref{eq:SPCkrho} with respect to the refined discretization again has to be solved by Algorithm~\ref{alg:admm}. As a reoptimization strategy, the values of the prior discretized solution $(u_\rho,v_\rho,\lambda_\rho,w_\rho,\mu_\rho)$ returned by Algorithm~\ref{alg:admm} can be used to initialize the variables in Step~\ref{it:step}. More precisely, the values of~$(u_\rho,w_\rho,\mu_\rho)$ can be duplicated according to the refinement of the subintervals and~$(v_\rho,\lambda_\rho)$ can be kept unchanged. In this way, we produce a primal feasible solution $(u_\rho,v_\rho,w_\rho)$ for the new subproblem \eqref{eq:SPCkrho}, but note that~$(\lambda_\rho,\mu_\rho)$ is not feasible for the corresponding dual problem.

\section{Numerical experiments} \label{sec:exp}

We now report the results of an extensive numerical evaluation of our branch-and-bound algorithm presented in the previous sections. The overall branch-and-bound method has been implemented in C++, using the \textsc{DUNE}-library~\cite{SAN21} for the discretization of the PDE. The source code can be downloaded at~\url{https://github.com/agruetering/dune-bnb}. 
For all experiments, we discretize the problems as described in Section~\ref{sec:fem}. This means that the spatial discretization uses a standard Galerkin method with continuous and piecewise linear functionals, while the temporal discretization for the control, the state, and the desired state~$y_\textup{d}$ uses piecewise constant functionals in time. The spatial integrals in the weak formulation of the state equation~\eqref{eq:state} and the adjoint equation~\eqref{eq:adjoint}, respectively, are approximated by a Gauss-Legendre rule with order~$3$. This means that all spatial integrals except for the one containing the form function $\varphi$ are calculated exactly. The discretized systems, arising from the discretization of the state and adjoint equation, are solved by a sequential conjugate gradient solver preconditioned with AMG smoothed by SSOR. 
All computations have been performed on a 64bit Linux system with an
Intel Xeon E5-2640 CPU @ 2.5 GHz and $32$ GB RAM.

\subsection{Algorithmic framework}

We start the branch-and-bound algorithm with an equidistant time grid with $20$ nodes
and, if necessary, we refine the subintervals that account for $\gamma=50\,\%$ of the total error; see Section~\ref{sec:refine}.  
The choice of the time point $\tau$ for the branching is crucial for the practical performance of the algorithm, since the implicit restrictions on the controls are highly influenced by the branching points; see Example~\ref{ex:fix1} and Example~\ref{ex:fix2}. Thus, the quality of the dual bounds of each node in the branch-and-bound tree strongly depends on the branching decisions. As already mentioned in Section~\ref{sec:branching}, it is natural to take the last computed relaxed control of the outer approximation algorithm into account, which we know up to a discretization of $(0,T)$; see Section~\ref{sec:fem}.  As a branching point, we choose the point of the time grid where the control has the highest deviation from $0/1$, \ie where the distance to $\{0,1\}$ multiplied by the length of the corresponding grid cell is maximal. This branching strategy corresponds to the choice of the variable with the most fractional value in finite-dimensional integer optimization. Finally, we use breadth-first search as an enumeration strategy since our computed primal bounds track the average of the relaxed solution over the given temporal grid of the discretization, \ie solve the CIA problem over $D(\sigma)$; compare Example~\ref{ex:heur1}. In depth-first search, the shape of the computed relaxed controls for the subproblems hardly changed, so that our primal heuristic always produced the same feasible solution and good primal bounds were found late. As a result, many nodes had to be examined before pruning. This effect is avoided by breadth-first search. 

The results presented in~\cite{partII} suggest to add only a few cutting planes before resorting to branching, because a significant increase in the dual bound was mostly obtained in the first cutting plane iterations. Moreover, we observed that the dual bounds got better with a decreasing Tikhonov parameter~$\alpha$, but the time needed to compute them increased with decreasing~$\alpha$. Thus, we investigate in Section~\ref{sec:param} whether a good quality or a quick computation of the dual bounds have a greater influence on the overall performance. 

The parabolic optimal control problems arising in each iteration of the outer approximation algorithm are solved by the ADMM algorithm; see Algorithm~\ref{alg:admm} in Section~\ref{sec:dual}. As tolerances for the primal and dual residuals in the ADMM algorithm, we chose $\varepsilon^\text{\scriptsize rel}=\varepsilon^\text{\scriptsize abs}=10^{-3}$ and required the absolute error of the discretization of \eqref{eq:SPCk'} to be less than $\varepsilon^\text{\scriptsize pr}=10^{-5}$. In order to guarantee the numerical stability of the ADMM algorithm, the penalty parameter of the cutting planes was set to $\rho=\tfrac{1+\sqrt{5}}{2}$. The best choice of the penalty term $\beta$ of the box constraints depending on the Tikhonov term $\alpha$ is investigated in Section~\ref{sec:param}. The resulting linear system in Step~\ref{it:3} of Algorithm~\ref{alg:admm} is solved by the conjugate gradient method, preconditioned with $P=(\alpha+\beta) I + \rho G^\star G\;.$ 

\subsection{Instances}

 In all experiments, we focus on the
case of an upper bound~$\sigma$ on the number of
switchings, \ie we consider the feasible set
\[
D(\sigma) = \big\{ u \in BV(0,T)\colon \; u(t) \in \{0,1\} \text{ f.a.a.\ } t \in (0,T),\; |u|_{BV(0,T)} \leq \sigma \big\}
\]
as defined in Section~\ref{sec:switchingcon}. However, we assume that $u$
is fixed to zero before the time horizon, so that we already count it
as one switching if~$u$ is $1$ at the beginning. Notwithstanding this
slight modification, the most violated
cutting plane for a given vector $v\notin C_{D(\sigma)_\SP,\Pi}$ can
be computed
in $O(M+N)$ time as discussed in Section~\ref{sec:dmax}, using the
separation algorithm presented in~\cite{buchheim23}. This separation
algorithm is thus fast enough to allow to choose the intervals for the
projection exactly as the intervals given by the discretization in
time; compare Section~\ref{sec:fem}.

We created instances of~\eqref{eq:optprob} with
$\Omega=(0,1)$, $T=1$, and~$\psi(x)=\exp(x)\sin(\pi\,x)+0.5.$ In
order to obtain challenging instances, we produced the desired state~$y_\textup{d}$ as follows: we first generated a control~$u_\textup{d}\colon [0,T] \to \{0,1\}$
with a total variation~$|u_\textup{d}|_{BV(0,T)}=\theta$ and chose
the desired state~$y_\textup{d}$ as~$S(u_\textup{d})$, such that
$u_\textup{d}$ is the optimal solution for Problem~\eqref{eq:optprob} if we allow~$\theta$ switchings. More
specifically, we randomly chose~$\theta$ jump points~$0< t_1< \cdots
<t_{\theta} < T$ on the equidistant time grid with $320$ nodes. Then,
we chose $u_\textup{d}\colon[0,T]\to \{0,1\}$ as the binary control
starting in zero and having the switching points
$t_1,\ldots,t_{\theta}$. In this way, we generated non-trivial
instances, where the constraint $D(\sigma)$ strongly affects the
optimal solution of \eqref{eq:optprob} in case $\sigma\ll \theta$.

\subsection{Parameter tuning}\label{sec:param}

Before testing the potential of our approach, we investigate the influence of some parameters on the overall performance. We first consider the Tikhonov term~$\alpha$ and the penalty term $\beta$ of the box constraints; see Section \ref{sec:dual}. Afterwards, we investigate how time-consuming it is to solve the subproblems arising in the branch-and-bound algorithm, depending on when we stop the outer approximation algorithm for each subproblem \eqref{eq:SP}. Here, we resort to branching if the relative change of the bound is less than a certain percentage (RED) in three successive iterations. Finally, we vary the  allowed relative deviation (TOL) of the objective value of the returned solution from the optimal value of \eqref{eq:optprob}; a subproblem in the branch-and-bound node is pruned when the remaining gap between primal and dual bound falls below this relative threshold. We start with RED\,$=$\,TOL\,$=$\,1\,\%.

For all results presented in this subsection, we have chosen the same instance with $\theta=8$ jump points and allowed $\sigma=3$ switchings, since we observed the typical behavior of the algorithm with these settings. 
We always report the overall number of investigated subproblems~(Subs), of cutting plane iterations~(Cuts), and of ADMM iterations~(ADMM). Moreover, the average number of fixings~($\varnothing$~FixPoints) and the average percentage of control variables that are implicitly fixed~($\varnothing$ FixIndices) are reported, where both averages are taken over all pruned subproblems. We also provide the overall run time (Time) in CPU hours.

\bgroup  \def\arraystretch{1.5}
\begin{table}
	\begin{scriptsize}
		\centering 
		\begin{tabular}{ccrrrrrr}
			\hline
			$\alpha$ & $\beta$  &Subs &  Cuts & ADMM &  $\varnothing$ FixPoints & $\varnothing$ FixIndices &  Time\\
			\hline 0.01& 0.01& 3309 &     6610 &    23489 & 16.07 & 91.65\,\% & 41.91\\
			&0.005 &3253 &     6519 &    19907 & 15.83 & 91.56\,\% & 35.59\\
			&0.001 & 2948 &     5905 &    18889 & 16.52 & 91.25\,\% & 30.84\\
			\midrule  0.005& 0.01 &1961 &     4187 &    17727 & 15.51 & 89.37\,\% & 26.99\\
			&0.005 & 1839 &     3896 &    13588 & 15.06 & 87.45\,\% & 18.33\\
			&0.001 &1764 &     3882 &    17582 & 16.16 & 87.27\,\% & 21.17\\
			\midrule 0.001& 0.01 &1784 &     5076 &    20283 & 17.65 & 87.13\,\% &22.52\\
			&0.005 &1066 &     3400 &     9999 & 14.25 & 81.60\,\% & 10.05\\	
			&0.001 & 1147 &     3426 &    13779 & 13.63 & 81.65\,\% & 13.22\\	
			\hline
		\end{tabular}
		\caption{Influence of the Tikhonov parameter $\alpha$ and the penalty term $\beta$ of the box constraints on the branch-and-bound algorithm.\label{tab:alphabeta}}
	\end{scriptsize}
\end{table}
\egroup

The results for different values of $\alpha$ and $\beta$ can be found in Table~\ref{tab:alphabeta}. The main message of Table~\ref{tab:alphabeta} is that a small value of $\alpha$ is generally favorable for the branch-and-bound algorithm, since a smaller value of $\alpha$ leads to stronger dual bounds and consequently, fewer fixings are needed on average to prune a subproblem. So, as long as no numerical issues arise with the ADMM algorithm and the DWR error estimator, one should choose $\alpha=0.001$. But, with smaller value of $\alpha$ it becomes more likely that the higher-order approximation of the unknown quantities (see Section \ref{sec:numericprimal}) is too imprecise to estimate the error in the cost functional, so that the branch-and-bound algorithm returns wrong solutions. This was also observed in our experiments: in many instances, the obtained solutions for $\alpha \in \{0.01,0.005\}$ switched three times and had very similar switching times for all values of~$\beta$. In contrast, the obtained solutions for $\alpha=0.001$ frequently switched only twice and differed enormously from the others. By recalculating the objective on such a fine grid that all returned solutions are piecewise constant on it, it turned out that the solutions obtained for $\alpha \in \{0.01,0.005\}$ were indeed better than the ones for $\alpha=0.001$. Moreover, the primal heuristic even produced some of the better solutions within the branch-and-bound scheme for $\alpha=0.001$, but due to the DWR error estimator, their time-mesh independent objective values were worse. For that reason, we choose $\alpha=\beta=0.005$ in all subsequent experiments.

\bgroup  \def\arraystretch{1.5}
\begin{table}
	\centering 
	\begin{scriptsize}
		\begin{tabular}{rrrrrrr}
			\hline
			RED &Subs & Cuts & ADMM & $\varnothing$ FixPoints & $\varnothing$ FixIndices & Time\\
			\hline 10\,\% & 1816 &     3610 &    11872 & 15.43 & 88.47\,\% & 17.05\\
			5\,\% & 1821 &     3647 &    11750 & 15.39 & 88.99\,\% & 16.87\\
			2\,\% & 1670 &     3443 &    11940 & 14.31 & 87.91\,\% & 16.86\\
			1\,\% & 1839 &     3896 &    13588 & 15.06 & 87.45\,\% & 18.33\\	
			0.5\,\% & 1857 &     4107 &    14592 & 15.00 & 87.44\,\% &29.25\\
			\hline
		\end{tabular}
		\caption{Impact of the ratio between branching and cutting plane iterations on the branch-and-bound algorithm.\label{tab:objred}}
	\end{scriptsize}
\end{table}
\egroup
We next investigate the interplay between branching and outer approximation. Table~\ref{tab:objred} demonstrates that a good balance is important: a stronger focus on the outer approximation leads to fewer branching decisions needed to cut off a subproblem. However, this does not necessarily imply that fewer fixings are needed to prune a subproblem, since the branching points strongly depend on the shape of the relaxed solutions. Moreover, it is more time-consuming to solve each node due to the increased number of cutting plane iterations. On the other hand, it is also not beneficial to resort to branching too early because more subproblems need to be investigated then. We thus use RED\,$=$\,2\,\% in the following. 

Finally, the impact of the relative allowed deviation from the optimal objective value on the performance of the branch-and-bound algorithm is shown in Table~\ref{tab:tol}. As expected, a higher tolerance leads to an earlier pruning of the subproblems, as indicated by the number of fixings required to prune a subproblem, and the running time decreases significantly. At the same time, however, the best known primal bound (Obj) found by the algorithm obviously increases, so that ultimately the user has to decide which deviation is still acceptable. We choose TOL\,$=$\,2\,\% in the following, which we think is a reasonable optimality tolerance.
\bgroup  \def\arraystretch{1.5}
\begin{table}[]
	\centering
	\begin{scriptsize}
		\begin{tabular}{rrrrrrrr}
			\hline
			TOL & Subs & Cuts & ADMM & $\varnothing$ FixPoints & $\varnothing$ FixIndices &  Obj & Time\\
			\hline 5\,\% &433 & 1123 &     6286 & 9.55 & 73.29\,\%& 0.137512 & 5.53\\
			2\,\% &860 & 1953 & 8644 & 11.66 & 81.62\,\% & 0.135436 & 8.18\\
			1\,\% & 1670 & 3443 & 11940 & 14.31 & 87.91\,\% & 0.135326&16.86\\
			0.5\,\% &3456 & 7437 & 18145 & 17.79 & 93.09\,\% & 0.135214 &50.65\\
			
			\hline 
		\end{tabular}
		\caption{Influence of the relative allowed deviation (TOL) from the optimum on the branch-and-bound algorithm.\label{tab:tol}}
	\end{scriptsize}
\end{table}
\egroup

\subsection{Performance of the algorithm}
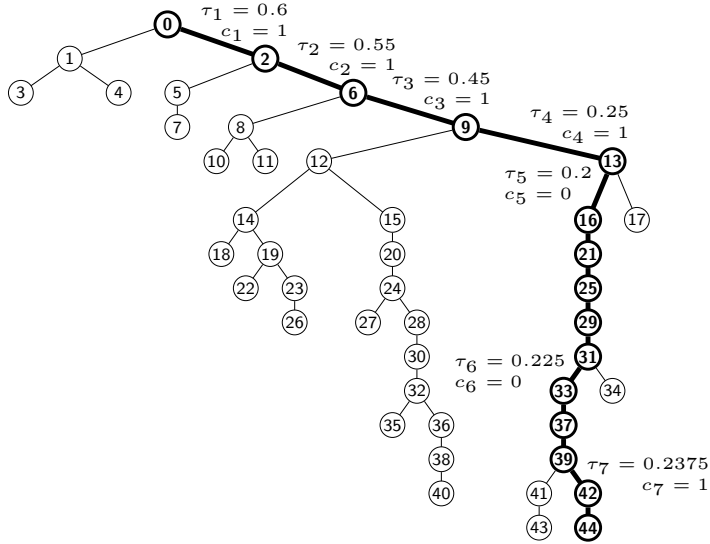
\begin{figure}
	\centering
	\scalebox{1.3}{
		\tiny
		\begin{tikzpicture}
		[sibling distance=20mm,level distance = 3.5mm, line width=0.5mm] \node [arnbold]{0}
		child { [sibling distance=10mm,level distance = 3.5mm,line width=0.1mm] node [arn] {1}
			child { [sibling distance=10mm,level distance = 6mm] node[arn]{3} }
			child { [sibling distance=10mm,level distance = 6mm,black] node [arn] {4} }
		}
		child{ [sibling distance=18mm,level distance = 3.5mm] node [arnbold] {2}
			child { [sibling distance=10mm,level distance = 3.5mm, line width=0.1mm] node[arn]{5} 
				child { [sibling distance=10mm,level distance =6mm, line width=0.1mm] node[arn]{7}}
			}
			child  { [sibling distance=23mm,level distance = 3.5mm] node[arnbold]{6}
				child { [sibling distance=5mm,level distance = 3.5mm, line width=0.1mm] node[arn]{8}
					child { [sibling distance=10mm,level distance = 6mm, line width=0.1mm] node[arn]{10}}
					child { [sibling distance=10mm,level distance = 6mm, line width=0.1mm] node[arn]{11}}
				}
				child { [sibling distance=30mm,level distance = 3.5mm] node[arnbold]{9}
					child { [sibling distance=15mm,level distance = 6mm, line width=0.1mm] node[arn]{12}
						child { [sibling distance=5mm,level distance = 3.5mm, line width=0.1mm] node[arn]{14}	
							child { [sibling distance=10mm,level distance = 6mm, line width=0.1mm] node[arn]{18}}
							child { [sibling distance=5mm,level distance = 3.5mm, line width=0.1mm] node[arn]{19}
								child { [sibling distance=10mm,level distance = 6mm, line width=0.1mm] node[arn]{22}}
								child { [sibling distance=10mm,level distance = 3.5mm, line width=0.1mm] node[arn]{23}
									child { [sibling distance=10mm,level distance = 6mm, line width=0.1mm] node[arn]{26}}
								}
							}
						}
						child { [sibling distance=10mm,level distance =3.5mm, line width=0.1mm] node[arn]{15}
							child { [sibling distance=10mm,level distance = 3.5mm, line width=0.1mm] node[arn]{20}
								child { [sibling distance=5mm,level distance = 3.5mm, line width=0.1mm] node[arn]{24}
									child { [sibling distance=10mm,level distance = 6mm, line width=0.1mm] node[arn]{27}}
									child { [sibling distance=10mm,level distance = 3.5mm, line width=0.1mm] node[arn]{28}
										child { [sibling distance=10mm,level distance = 3.5mm, line width=0.1mm] node[arn]{30}
											child { [sibling distance=5mm,level distance = 3.5mm, line width=0.1mm] node[arn]{32}
												child { [sibling distance=10mm,level distance = 6mm, line width=0.1mm] node[arn]{35}}
												child { [sibling distance=10mm,level distance = 3.5mm, line width=0.1mm] node[arn]{36}
													child { [sibling distance=10mm,level distance = 3.5mm, line width=0.1mm] node[arn]{38}
														child { [sibling distance=10mm,level distance = 6mm, line width=0.1mm] node[arn]{40}}
													}
												}
											}
										}
									}
								}
							}
						}
					}
					child { [sibling distance=5mm,level distance = 6mm] node[arnbold]{13}
						child { [sibling distance=10mm,level distance = 3.5mm] node[arnbold]{16}
							child { [sibling distance=10mm,level distance = 3.5mm] node[arnbold]{21}
								child { [sibling distance=10mm,level distance = 3.5mm] node[arnbold]{25}
									child { [sibling distance=10mm,level distance = 3.5mm] node[arnbold]{29}
										child { [sibling distance=5mm,level distance = 3.5mm] node[arnbold]{31}
											child { [sibling distance=10mm,level distance = 3.5mm] node[arnbold]{33}
												child { [sibling distance=10mm,level distance = 3.5mm] node[arnbold]{37}
													child { [sibling distance=5mm,level distance = 3.5mm] node[arnbold]{39}
														child { [sibling distance=10mm,level distance = 3.5mm, line width=0.1mm] node[arn]{41}
															child { [sibling distance=10mm,level distance = 6mm, line width=0.1mm] node[arn]{43}}
														}	
														child { [sibling distance=10mm,level distance = 3.5mm] node[arnbold]{42}
															child { [sibling distance=10mm,level distance = 6mm] node[arnbold]{44}}
															edge from parent node[near start,right,align=right]{$\tau_7=0.2375$\\ $c_7=1$}
														}
													}
												}
												edge from parent node[near start,left,align=left]{$\tau_6=0.225$\\ $c_6=0$}
											}
											child { [sibling distance=10mm,level distance = 6mm, line width=0.1mm] node[arn]{34}}
										}
									}
								}
							}
							edge from parent node[near start,left,align=left]{$\tau_5=0.2$\\ $c_5=0$}
						}
						child { [sibling distance=10mm,level distance = 6mm, line width=0.1mm] node[arn]{17}}
						edge from parent node[near end,above,align=right]{$\tau_4=0.25$\\ $c_4=1$}								
					}
					edge from parent node[near end,above,align=right]{$\tau_3=0.45$\\ $c_3=1$}				
				} 
				edge from parent node[near end,above,align=right]{$\quad\tau_2=0.55$\\ $\quad c_2=1$}
			}
			edge from parent node[near end,above,align=right]{$\tau_1=0.6$\\ $c_1=1$}
		};
		\end{tikzpicture}}
	\caption{Complete branch-and-bound tree of an instance generated with~$\theta=3$ jump points and with~$\sigma=1$ allowed switchings. The path of the optimal solution is marked in bold and the branching decisions along the optimal path are listed. In the case of a single child node, the temporal discretization of the subproblem has been refined.\label{fig:interplay}}
\end{figure}
Before reporting running times and other key performance indicators of our algorithm, we first illustrate the interplay between branching and adaptive refinement by an example. Figure~\ref{fig:interplay} shows the complete branch-and-bound tree obtained for an instance with $\theta=3$ jump points and only one allowed switching, \ie $\sigma=1$. Whenever a node has a single child node in the illustration, the discretization of the subproblem has been refined. The branch-and-bound tree shows that a large part of the generated subproblems can already be pruned without any refinement. Moreover, in relatively few branches the subproblems need to be refined multiple times in order to decide whether a solution of desired quality can be found in these branches. The branching decisions taken along the path leading to the returned solution illustrate that, \eg the generated subproblem $16$ was refined in order to choose the sixth fixing point as~$\tau_6=0.225$. This was not possible with the previous discretization of the problem. In particular, the fourth and fifth fixing point together have limited the switching point to be in the interval $(0.2,0.25]$. The last branching decision in this tree serves to determine~$t=0.2375$ as the switching point of the returned solution.

\bgroup  \def\arraystretch{1.5}
\begin{table}[!t]
	\centering
	\begin{scriptsize}
		\begin{tabular}{lrrrrrrrrrr}
			\hline
			$\text{}\quad\sigma$& \multicolumn{5}{c}{1} & \multicolumn{5}{c}{2} \\\cmidrule(lr){2-6}\cmidrule(lr){7-11}
			$\theta$&  Subs & Cuts & Time & Refine & Ratio&~~ Subs & Cuts & Time & Refine&Ratio\\ \midrule
			1 &\phantom{00}27.6 & \phantom{00}51.4 &   \phantom{0}0.10  & 3.6 &     7.89\,\%  & & & & & \\
			2 &33.2 & 71.8 &   0.23  & 4.8 &     9.27\,\%  &\phantom{0}157.6 & \phantom{00}292.0 &  \phantom{0}0.74  & 6.6 &     3.59\,\% \\
			3 &32.4 & 69.6 &  0.22  & 3.8 &     5.53\,\%  &132.2 & 274.2 &  1.04  & 4.4 &     9.14\,\%  \\ 
			4 &29.0 & 65.2 &   0.22  &4.0 &    30.75\,\%  &167.2 & 326.0 &  1.02  & 6.8 &     4.45\,\% \\ 
			5&36.4 & 79.2 &   0.20  & 4.2 &     8.58\,\%  &147.6 & 319.4 &  1.04  & 4.6 &     6.46\,\%  \\
			6&18.6 & 49.0 &  0.19  & 1.0 &    64.04\,\%  &202.6 & 410.0 &  1.30  & 5.6 &     2.67\,\% \\ 
			7&32.2 & 75.6 &  0.19  & 2.2 &    25.48\,\%  &247.2 & 518.2 &  1.63  & 4.4 &     2.82\,\% \\
			8&27.0 & 65.6 &   0.23  & 3.0 &    27.88\,\%  &206.2 & 460.2 &  1.49  & 4.6 &     2.99\,\% \\
                        \hline\\
                        \hline
			$\text{}\quad\sigma$& \multicolumn{5}{c}{3} & \multicolumn{5}{c}{4} \\\cmidrule(lr){2-6}\cmidrule(lr){7-11}
			$\theta$&  Subs & Cuts & Time & Refine & Ratio&~~ Subs & Cuts & Time & Refine&Ratio\\ \midrule
			3 & 956.6 & 1848.4 & 8.90  & 7.4 &     \phantom{0}1.86\,\% & &  &  &  &    \\ 
			4 &976.0 & 2128.2 & 8.79  & 7.2 &     1.28\,\%&5572.8 & 11055.6 & 44.29  & 8.0 &     2.09\,\% \\ 
			5&974.0 & 1861.6 & 6.75  & 7.2 &     6.32\,\% &4949.4 & 9194.0 & 43.97  & 7.4 &     2.71\,\% \\
			6&1061.8 & 2278.0 & 10.22  & 7.2 &     1.35\,\% &6255.8 & 12360.8 & 65.06  & 8.0 &     2.44\,\%  \\ 
			7&1239.0 & 2496.2 & 11.15  & 7.2 &     2.41\,\% &6144.6 & 12095.8 & 62.73  & 7.4 &     1.73\,\% \\
			8&1557.2 & 3123.2 & 13.70  & 6.4 &     1.45\,\% &6379.8 & 13005.4 & 66.68  & 7.8 &     5.53\,\% \\ \hline
		\end{tabular}
	\end{scriptsize}
	\caption{Performance of the branch-and-bound algorithm for instances generated with $\theta$ switching points, allowing~$\sigma$ switchings. For each combination of~$\theta$ and~$\sigma$ with $\sigma\leq \theta$, five instances are solved and the average of the number of generated subproblems (Subs), the total cutting plane iterations (Cuts), the total run time in CPU hours (Time), and the maximal number of refinements of a grid cell (Refine) are reported. Moreover, we state the percentage of subproblems (Ratio) whose grid mesh size equals the finest grid mesh size considered.\label{tab:bb}}
\end{table}
\egroup

Table~\ref{tab:bb} shows the performance of the branch-and-bound algorithm for various instances generated with~$\theta\in\{1,\ldots,8\}$ and $\sigma\in\{1,\ldots,4\}$ for the total number of switching points.
We were able to solve problems with up to four allowed switchings, but, as could be expected, the number of generated subproblems strongly increases in~$\sigma$. However, we note that the ratio between generated subproblems and total cutting plane iterations is not affected by $\sigma$. While the branch-and-bound algorithm is able to solve problems with~$\sigma=3$ within 14~CPU hours, the algorithm does not terminate within 60 CPU hours for most instances with~$\sigma=4$ allowed switchings.
However, the results of Table~\ref{tab:bb} show that the average number of subproblems in the branch-and-bound-tree remains relatively small for all instances, showing that the dual bounds computed by our algorithm are rather tight, and that the main challenge in terms of running times is the fast computation of these dual bounds.

Moreover, the reported results show that our approach to globally solve parabolic optimal control problems with dynamic switches by means of branch-and-bound, combined with an adaptive refinement strategy, works in practice. Whenever the maximal number of refinements of a grid cell within the branch-and-bound algorithm was larger than $4$ in our experiments, a grid cell was refined this often in less than $10\,\%$ of the subproblems. Here, the finest grid mesh size decreases with the number of allowed switching points. This means that, if more switchings are allowed, a finer temporal discretization is needed to detect the optimal positions of the switching points.

In summary, our proposed branch-and-bound method is an effective and robust algorithm to globally solve control problems of the form \eqref{eq:optprob}. A few pointwise fixings of the controls suffice to significantly truncate the set of feasible switching patterns. Moreover, thanks to the computation of tight dual bounds by means of outer approximation, relatively few subproblems need to be inspected and refined within the branch-and-bound algorithm.

\appendix

\section{Finite-dimensional convex hulls under fixings}
\subsection{Restricted total variation}\label{sec:tvbound}
We show that if we restrict the total variation of a single switch to be less than $\sigma >0$, \ie the set of feasible switching patterns is given by
\begin{equation*}
D(\sigma)=\{u\in BV(0,T):u(t)\in \{0,1\} \text{ a.e. in }(0,T),\ |u|_{BV(0,T)}\leq \sigma \}\;,
\end{equation*}
then the convex hull $C_{\overline{D(\sigma)_\SP},\Pi}$ of the finite dimensional projections $\{\Pi(u):u\in D(\sigma)_\SP\}$ under arbitrary fixings is a 0/1 polytope. 
\begin{theorem}
	The set~$C_{\overline{D(\sigma)_\SP},\Pi}$ is a 0/1 polytope. 
\end{theorem}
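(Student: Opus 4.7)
The plan is to verify the two assertions sketched in the paper: $C_{\overline{D(\sigma)_\SP},\Pi}=\conv(K)$ and $K\subseteq\{0,1\}^M$. The second is immediate, since any $u\in\overline{D(\sigma)_\SP}$ that is constant a.e.\ on each $I_i$ satisfies $\Pi(u)_i=u|_{I_i}\in\{0,1\}$. The inclusion $\conv(K)\subseteq C_{\overline{D(\sigma)_\SP},\Pi}$ is trivial, so everything reduces to showing that for every $u\in\overline{D(\sigma)_\SP}$ the vector $\Pi(u)$ is a convex combination of elements of $K$.

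To establish this, I would process the projection intervals one at a time. Fix $u\in\overline{D(\sigma)_\SP}$ and an index $i\in\{1,\ldots,M\}$, and define $u^{0}_i$ and $u^{1}_i$ to coincide with $u$ on $(0,T)\setminus I_i$ and to be constantly $0$, respectively $1$, on $I_i$. Since the fixing points $\tau_j$ all lie outside $I_i$ by the standing assumption in the section, both modifications automatically satisfy $u^{b}_i(\tau_j)=c_j$ for $b\in\{0,1\}$. Writing $\alpha_i:=\Pi(u)_i\in[0,1]$, the identity $\Pi(u)=\alpha_i\Pi(u^{1}_i)+(1-\alpha_i)\Pi(u^{0}_i)$ holds on coordinate $i$, while all other coordinates are unchanged by the replacement. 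Applying this step iteratively across $i=1,\ldots,M$ expresses $\Pi(u)$ as a convex combination of at most $2^M$ projection vectors, each of which is the image of a control that is constant on every~$I_i$.

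The genuine obstacle is verifying that the replacements remain in $\overline{D(\sigma)_\SP}$, i.e.\ that the total variation bound $|u^{b}_i|_{BV(0,T)}\le\sigma$ is preserved. Both $u^{0}_i$ and $u^{1}_i$ are needed only when $\alpha_i\in(0,1)$, which forces $u$ to switch at least once inside $I_i$; if $\alpha_i\in\{0,1\}$, only the matching replacement enters the convex combination, and it coincides with $u$ on $I_i$. A short case analysis on the traces of $u$ at the endpoints of $I_i$ together with the number $k\ge 1$ of switchings of $u$ inside $I_i$ shows that collapsing $I_i$ to a constant removes $k$ interior switchings and introduces at most two new ones at $\partial I_i$: for $k\ge 2$ the net change is bounded by $2-k\le 0$, and for $k=1$ a direct count in the remaining four boundary configurations yields a non-positive change as well. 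Since the projection intervals are pairwise disjoint, single-interval modifications can be iterated without interfering with one another, and the whole construction is transferred from $D(\sigma)_\SP$ to $\overline{D(\sigma)_\SP}$ by $L^p$-continuity of $\Pi$ together with the closedness of $\conv(K)\subseteq\R^M$.
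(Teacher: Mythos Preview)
Your overall strategy coincides with the paper's: replace $u$ by a constant on one projection interval at a time, write $\Pi(u)$ as a convex combination of the two resulting projections, and iterate. Your total-variation count is correct, and the closing step---passing from $D(\sigma)_\SP$ to $\overline{D(\sigma)_\SP}$ via continuity of $\Pi$ and closedness of the finite polytope $\conv(K)$---is a clean way to finish.

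There is, however, a genuine gap in the claim that ``both modifications automatically satisfy $u^{b}_i(\tau_j)=c_j$.'' The standing assumption is only $\tau_j\notin I_i=(a_i,b_i)$, which does \emph{not} exclude $\tau_j=a_i$. Since the fixing constraint is read through the right-continuous representative, the value of $u^{b}_i$ at $a_i$ is $\lim_{t\searrow a_i}u^{b}_i(t)=b$, not $u(a_i)$. Hence if $\tau_j=a_i$ and $c_j\neq b$, the modified control $u^{b}_i$ fails the fixing and is not in $D(\sigma)_\SP$, so your iteration cannot proceed inside $D(\sigma)_\SP$. (The right endpoint $b_i$ causes no trouble, since the right-continuous value there is inherited from $u$ outside $I_i$.) The paper handles exactly this boundary case: instead of overwriting all of $I_\ell$, it takes an approximating sequence $v^m\in D(\sigma)_\SP$ and overwrites only the compact subinterval $[a_\ell+\tfrac{\lambda(I_\ell)}{2m},\,b_\ell-\tfrac{\lambda(I_\ell)}{2m})$, keeping $v^m(a_\ell)$ and $v^m(b_\ell)$ intact; this yields $w_k^m\in D(\sigma)_\SP$ converging to your $u^{k}_i$, which shows $u^{k}_i\in\overline{D(\sigma)_\SP}$ even in the boundary case. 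Your argument becomes complete once you insert this approximation step (or, equivalently, argue directly that $u^{b}_i\in\overline{D(\sigma)_\SP}$ rather than $D(\sigma)_\SP$).
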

\begin{proof}
	The proof is similar to the one of Theorem 3.8
        in~\cite{partI}, where no fixings have been considered. We
        claim that~$C_{\overline{D(\sigma)_\SP},\Pi}=\conv(K)$, where
	\[
	\begin{aligned}
	K:=\{\Pi(u) \colon & u \in \overline{D(\sigma)_\SP} \text{ and for all }i=1,\ldots,M \text{ there exists } w_i\in \{0,1\}\\
	& \text{with } u(t)\equiv w_i \text{ f.a.a.\ } t\in I_i\}\;.
	\end{aligned}
	\]
	From this, the result follows directly, as~$K\subseteq\{0,1\}^M$
	holds by definition.
	
	Since $K$ is a subset of $\{\Pi(u) \colon u \in
        \overline{D(\sigma)_\SP}\}$, the direction
        \grqq$\supseteq$\grqq\ is trivial.  It thus remains to show
        \grqq$\subseteq$\grqq. For this, let $u\in
        \overline{D(\sigma)_\SP}$. We need to prove that $\Pi(u)$ can
        be written as a convex combination of vectors in~$K$. Let
        $l\in\{0,\dots,M\}$ denote the number of intervals in which
        the switch~$u$ is switched at least once. We prove the
        assertion by means of complete induction over the number
        $l$. For $l=0$, we clearly have $\Pi(u)\in K\subseteq
        \conv(K)$.  So let~$l>0$ and choose an
        index~$\ell\in\{1,\ldots,M\}$ such that~$u$ switches
        at least once in~$I_\ell$. For~$k=0,1$, define the
        function~$u_k$ as follows:
	\[u_k(t):=\begin{cases}
	k,& \text{if }t\in I_\ell \\ 
	u(t), & \text{otherwise.}
	\end{cases}
	\]
	Then, by construction, $\Pi(u)_\ell = \lambda \Pi(u_1)_\ell + (1-\lambda)\Pi(u_0)_\ell $ for $\lambda:=\Pi(u)_\ell\in [0,1]$ and $u_k$ has at most as many switching as $u$.  
	
	We next show that the controls $u_0$ and~$u_1$ belong to $\overline{D(\sigma)_\SP}$. So let $k\in \{0,1\}$ be arbitrary. 
	Due to $u\in \overline{D(\sigma)_\SP}$, there exists a sequence $\{v^m\}_{m\in \N}\in D(\sigma)_\SP$ such that $v^m\to u$ in $L^p(0,T)$ for $m\to \infty$. In particular, there exists a subsequence, which we denote by the same
	symbol for simplicity, with $v^m(t)\to u(t)$ f.a.a. $t\in (0,T)$ for $m\to \infty$. Since $u$ switches at least once in the interval~$I_\ell$ and $v^m$ converges pointwise almost everywhere to $u$, there exists $m_0\in\N$ such that for all $m\geq m_0$ the controls~$v^m$ also switch at least once in $I_\ell$. When constructing a sequence in $D(\sigma)_\SP$ converging to $u_k$ with the help of $\{v^m\}_{m\in \N}$, we need to consider that fixing points $\tau_j$ may coincide with the interval limits of $I_\ell=(a_\ell,b_\ell)$ so that we are only able to change the values in the inner of $I_\ell$. Thus, we define 
	$$w_k^m(t)=\begin{cases}
	k, & t\in [a_\ell+\tfrac{\lambda(I_\ell)}{2m}, b_\ell-\tfrac{\lambda(I_\ell)}{2m}) \\
	v^m(a_\ell), &t\in [a_\ell,a_\ell+\tfrac{\lambda(I_\ell)}{2m}) \\
	v^m(b_\ell), &t\in [b_\ell-\tfrac{\lambda(I_\ell)}{2m},b_\ell) \\
	v^m(t), &\text{otherwise}\;. 
	\end{cases}$$
	Due to $v^m\in \{0,1\}$ a.e.~in $(0,T)$, also $w_k^m(t)\in \{0,1\}$ holds f.a.a.\ $t\in(0,T)$. By our general assumption, we have $\tau_j\notin(a_\ell,b_\ell)$ for all $j=1,\ldots,N$, so that $w_k^m(\tau_j)=v^m(\tau_j)=c_j$ follows with~$v^m\in D(\sigma)_\SP$. Furthermore, for $m\geq m_0$, $w_k^m$ has at most as many switchings as
	$v^m$ in total and we thus obtain $w_k^m\in D(\sigma)_\SP$ for $m\geq m_0$. It is easy to see that $w_k^m \to u_k$ in~$L^p(0,T)$ for $m\to \infty$, so that we get $u_k\in \overline{D(\sigma)_\SP}$, as claimed.   
	
	By the induction hypothesis, the
	vectors~$\Pi(u_k)$ can thus be written as convex combinations of vectors
	in $K$ and consequently, also $\Pi(u)$ is
	a convex combination of vectors in~$K$.
\end{proof}

\subsection{Switching point constraints}\label{sec:combswitch}

In the following, we show some auxiliary results for the class 
\begin{equation*} \begin{aligned}
  D(P) := \{ u_{t_1,\dots,t_\sigma} \colon (t_1, \ldots, t_\sigma) \in P,~
  0\leq t_1\leq \cdots\leq t_\sigma < \infty\}
\end{aligned}
\end{equation*}
of switching point constraints, where $P\subseteq\R_+^\sigma$ is a given polytope. These results are used to show Theorem~\ref{thm:Dp} and Theorem~\ref{thm:dwell} in Section~\ref{sec:spc}, stating that the finite-dimensional convex hulls under fixings are still polytopes and that the corresponding separation problems are tractable in the case that
$P=\{t\in\R_+^\sigma: t_i-t_{i-1}\geq s \ \forall i=1,\ldots,\sigma\}$ for some $s>0$.
Using the notation introduced in Section~\ref{sec:spc}, we first show
\begin{lemma}\label{lem:DPSP}
$$D(P)_\SP=\bigcup_{\varphi\in \cZ} V_\varphi\;.$$
\end{lemma}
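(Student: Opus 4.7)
The plan is to prove both inclusions directly, using the right-continuous representative of $u_{t_1,\dots,t_\sigma}$ (which equals $1$ at $t$ iff $|\{i \colon t_i \le t\}|$ is odd) throughout. For each $j \in \{0,1,\dots,N\}$ I denote by $l_j$ the unique index with $z_{l_j} = \tau_j$, using the convention $\tau_0 = -1 = z_0$ and $c_0 = 0$ already adopted in the paper.

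For the direction $\supseteq$, I take an arbitrary $u_{t_1,\dots,t_\sigma} \in V_\varphi$ with $\varphi \in \cZ$. Membership in $D(P)$ is immediate from $Q_\varphi \subseteq P$ and the ordering built into $Q_\varphi$, so only the fixings $u(\tau_j) = c_j$ for $j = 1,\dots,N$ need to be verified. I would first argue that the strict inequality $z_{\varphi(i)-1} < t_i$ for $i \in J$ rules out the only case in which $t_i$ could coincide with $\tau_j$ while $\varphi(i) > l_j$, so that
\begin{equation*}
\big|\{i \colon t_i \le \tau_j\}\big| \;=\; \big|\{i \colon \varphi(i) \le l_j\}\big| \;=\; \sum_{k=1}^{j} \big|\{i \colon \varphi(i) \in (l_{k-1},l_k]\}\big|.
\end{equation*}
By the defining parity condition of $\cZ$, each summand has parity $c_{k-1} \oplus c_k$, hence the total parity telescopes to $c_0 \oplus c_j = c_j$, which is exactly $u(\tau_j) = c_j$.

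For the direction $\subseteq$, I would construct, for each $u_{t_1,\dots,t_\sigma} \in D(P)_\SP$, an explicit $\varphi \in \cZ$ with $(t_1,\dots,t_\sigma) \in Q_\varphi$. My construction is to set $\varphi(i) := l$ whenever $t_i = z_l$ for some $l \in \{1,\dots,r-1\}$, and otherwise to let $\varphi(i)$ be the unique $k$ with $z_{k-1} < t_i < z_k$. With this choice the containment $z_{\varphi(i)-1} \le t_i \le z_{\varphi(i)}$ holds, and the strict inequality for $i \in J$ is automatic because $t_i$ is never equal to $z_{\varphi(i)-1}$ by construction. The membership $\varphi \in \cZ$ then follows from the combinatorial identity
\begin{equation*}
\big|\{i \colon \varphi(i) \in (l_{j-1},l_j]\}\big| \;=\; \big|\{i \colon \tau_{j-1} < t_i \le \tau_j\}\big|,
\end{equation*}
together with the observation that the right-hand side counts the switchings of $\hat u$ in $(\tau_{j-1},\tau_j]$, whose parity must match $c_{j-1} \oplus c_j$ because $\hat u(\tau_{j-1}) = c_{j-1}$ and $\hat u(\tau_j) = c_j$.

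The main obstacle will be the careful bookkeeping around boundary coincidences $t_i = z_l$, especially when $z_l$ is a fixing point. These coincidences interact subtly with the strict-inequality constraint defining $J$ and with the right-continuity convention at the $\tau_j$: the rule of assigning $\varphi(i) = l$ rather than $l+1$ when $t_i = z_l$ is precisely what synchronises the parity condition defining $\cZ$ with the pointwise fixings. Once this case analysis is set up, both identities above reduce to routine verifications.
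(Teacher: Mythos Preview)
Your proposal is correct and follows essentially the same route as the paper's proof. Your case-split construction of $\varphi$ in the $\subseteq$ direction is equivalent to the paper's one-line definition $z_{\bar\varphi(i)-1} < t_i \le z_{\bar\varphi(i)}$, and the parity/telescoping argument you spell out is exactly the reasoning the paper summarises more tersely.
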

\begin{proof}
Let $u=u_{t_1,\ldots,t_\sigma}\in D(P)_\SP$ with~$(t_1,\ldots,t_\sigma)\in P$. Define $\bar{\varphi}: \{1,\ldots,\sigma\}\to \{1,\ldots,r\}$ such that $z_{\bar{\varphi}(i)-1}<t_i\leq z_{\bar{\varphi}(i)}$ holds for $i=1,\ldots,\sigma$. Due to $u_{t_1,\ldots,t_\sigma}(\tau_1)=c_1$, the other fixings $u_{t_1,\ldots,t_\sigma}(\tau_j)=c_j$, $2\leq j\leq N$, can only be satisfied if the number of switching points in $(\tau_{j-1},\tau_{j}]$ is even in the case $c_{j-1}=c_{j}$ and odd, otherwise. If $c_1=0$, then $u_{t_1,\ldots,t_\sigma}(\tau_1)=0$ only holds if an even number of switching points is less or equal to $\tau_1$, and in the other case~$c_1=1$, this number must be odd. Consequently, we obtain $\bar{\varphi}\in \cZ$  and $u\in V_{\bar{\varphi}}$. 

        For the reverse inclusion, let $u\in V_\varphi$ for some $\varphi \in \cZ$. Then there exists $(t_1,\ldots,t_\sigma)\in Q_\varphi$ such that $u=u_{t_1,\ldots,t_\sigma}$. With $Q_\varphi \subseteq P$ it follows that $u\in D(P)$. Since  $\varphi \in \cZ$, we know that the correct number of switching points is assigned between $\tau_{j-1}$ and $\tau_{j}$ in order to respect the given fixings in $D(P)_\SP$ . Moreover,
  the last requirement in the definition of $Q_\varphi$ ensures that no switching point assigned to the right neighboring interval of $\tau_j$ is equal to $\tau_j$, so the given fixings~$u_{t_1,\ldots,t_\sigma}(\tau_j)=c_j$ are indeed satisfied for all~$j\in\{1,\ldots,N\}$, which completes the proof.
\end{proof}
To show that the convex hull of all projection vectors from controls $u\in\overline{D(P)_\SP}$ is a polytope, we can use that $\overline{D(P)_\SP}=\bigcup_{\varphi\in \cZ} \overline{V_\varphi}$ holds, thanks to Lemma~\ref{lem:DPSP} and the fact that $\cZ$ is finite. Consequently, we essentially need that $\Pi(\overline{V_\varphi})$ is a polytope for every $\varphi$ to deduce the polyhedricity of $C_{\overline{D(P)_\SP},\Pi}$; compare Theorem \ref{thm:Dp}. For this, we now prove that we simply need to consider the closure of the sets~$Q_\varphi$ in $\R^\sigma$ to obtain~$\overline{V_\varphi}$, with the help of the following auxiliary result:
\begin{lemma}\label{lem:rep}
	If there exists a sequence $\{u_{t_1^m,\ldots,t_\sigma^m}\}_{m\in \N}$ with $ u_{t_1^m,\ldots,t_\sigma^m}\to u$ in $L^p(0,T)$ for some $u\in L^p(0,T)$ and $t^m:=(t_1^m,\ldots,t_\sigma^m) \to \bar{t}$ in $\R^\sigma$, then $u=u_{\bar t_1,\dots,\bar t_\sigma}$. 
\end{lemma}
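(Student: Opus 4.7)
The plan is to establish pointwise almost-everywhere convergence of the sequence $u_{t_1^m,\dots,t_\sigma^m}$ to the candidate limit $u_{\bar t_1,\dots,\bar t_\sigma}$, and then invoke uniqueness of $L^p$-limits (up to representatives) to identify this with the given $L^p$-limit~$u$.

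Concretely, the key step is the following pointwise argument. Let $E:=\{\bar t_1,\dots,\bar t_\sigma\}\subset[0,\infty)$, which is a finite (hence null) set. Fix any $t\in(0,T)\setminus E$. Then $t\neq \bar t_i$ for every $i\in\{1,\dots,\sigma\}$, so the sign of $\bar t_i - t$ is strictly nonzero for each~$i$. Since $t_i^m \to \bar t_i$ in $\R$, there exists $m_0=m_0(t)\in\N$ such that for all $m\ge m_0$ and all $i$, the sign of $t_i^m - t$ agrees with the sign of $\bar t_i - t$. In particular, $|\{i\colon t_i^m\le t\}|=|\{i\colon \bar t_i\le t\}|$ for all $m\ge m_0$. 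By the definition of the representative given in Section~\ref{sec:bv}, this yields
$$u_{t_1^m,\dots,t_\sigma^m}(t)=u_{\bar t_1,\dots,\bar t_\sigma}(t) \quad \forall\, m\ge m_0\;,$$
so that $u_{t_1^m,\dots,t_\sigma^m}(t) \to u_{\bar t_1,\dots,\bar t_\sigma}(t)$ as $m\to\infty$ for every $t\in(0,T)\setminus E$.

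Because $E$ has Lebesgue measure zero, this is pointwise convergence almost everywhere on $(0,T)$. Since $|u_{t_1^m,\dots,t_\sigma^m}(t)|\le 1$ uniformly in $m$ and $t$, the dominated convergence theorem gives $u_{t_1^m,\dots,t_\sigma^m} \to u_{\bar t_1,\dots,\bar t_\sigma}$ in $L^p(0,T)$. On the other hand, by assumption, $u_{t_1^m,\dots,t_\sigma^m}\to u$ in $L^p(0,T)$. By uniqueness of limits in $L^p(0,T)$, we conclude $u=u_{\bar t_1,\dots,\bar t_\sigma}$ as elements of $L^p(0,T)$, which is the claimed identity.

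The only subtlety (and the main technical point) is that the switching-point tuples $(t_1^m,\dots,t_\sigma^m)$ and $(\bar t_1,\dots,\bar t_\sigma)$ need not be sorted and may have repetitions or coincide with the endpoints $\{0,T\}$; this is handled transparently by the definition in Section~\ref{sec:bv}, which only counts the cardinality $|\{i\colon t_i\le t\}|$ and is thus invariant under reordering. Everything else reduces to the elementary observation that a strict inequality between real numbers is preserved under sufficiently small perturbations.
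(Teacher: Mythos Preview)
Your proof is correct and follows essentially the same idea as the paper: the paper simply cites \cite[Lemma~3.10]{partI}, noting that the result follows from the continuity of the map $(t_1,\dots,t_\sigma)\mapsto u_{t_1,\dots,t_\sigma}$ from~$\R^\sigma$ to~$L^p(0,T)$, combined with uniqueness of $L^p$-limits. You supply a direct, self-contained proof of exactly this continuity via pointwise a.e.\ convergence and dominated convergence, which is precisely the natural way to establish it.
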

\begin{proof}
	The assertion is proven in \cite[Lemma~3.10]{partI} and is based on the continuity of the mapping $\R^\sigma\ni(t_1,\dots,t_\sigma)\mapsto u_{t_1,\ldots,t_\sigma}\in L^p(0,T)$.
\end{proof}
\begin{lemma} 
  $$\overline{V_\varphi}= \{u_{t_1,\ldots,t_\sigma}\colon (t_1,\ldots,t_\sigma)\in \overline{Q_\varphi}\}$$
\end{lemma}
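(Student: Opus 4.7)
The plan is to prove both inclusions separately, with Lemma~\ref{lem:rep} doing essentially all of the analytic work.

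For the inclusion ``$\supseteq$'', I would take an arbitrary $(t_1,\ldots,t_\sigma)\in\overline{Q_\varphi}$ and pick a sequence $\{t^m\}_{m\in\N}=\{(t_1^m,\ldots,t_\sigma^m)\}_{m\in\N}\subseteq Q_\varphi$ with $t^m\to(t_1,\ldots,t_\sigma)$ in $\R^\sigma$. Each $u_{t_1^m,\dots,t_\sigma^m}$ lies in $V_\varphi$ by definition, and the continuity of the parameterization map $\R^\sigma\ni(s_1,\dots,s_\sigma)\mapsto u_{s_1,\dots,s_\sigma}\in L^p(0,T)$ (which underlies Lemma~\ref{lem:rep}) yields $u_{t_1^m,\dots,t_\sigma^m}\to u_{t_1,\dots,t_\sigma}$ in $L^p(0,T)$. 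Hence $u_{t_1,\dots,t_\sigma}\in\overline{V_\varphi}$.

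For the inclusion ``$\subseteq$'', I would start from $u\in\overline{V_\varphi}$ and choose a sequence $\{u_{t_1^m,\dots,t_\sigma^m}\}_{m\in\N}\subseteq V_\varphi$ converging to $u$ in $L^p(0,T)$, with $t^m\in Q_\varphi$ for each $m$. The decisive observation is that $Q_\varphi\subseteq P$ is bounded, because $P\subseteq\R_+^\sigma$ is a polytope. Consequently, the sequence $\{t^m\}_{m\in\N}$ is bounded in $\R^\sigma$ and admits a convergent subsequence, which we again denote by $\{t^m\}$, with limit $\bar t\in\R^\sigma$. Since every $t^m\in Q_\varphi$, we have $\bar t\in\overline{Q_\varphi}$. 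Lemma~\ref{lem:rep} then identifies the $L^p$-limit as $u=u_{\bar t_1,\dots,\bar t_\sigma}$, so $u$ belongs to the right-hand side as claimed.

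There is no real obstacle here; the only point that needs a short comment is why the strict inequalities $z_{\varphi(i)-1}<t_i$ for $i\in J$ appearing in the definition of $Q_\varphi$ cause no trouble. They simply mean that $\overline{Q_\varphi}$ is the polytope obtained by relaxing these to $z_{\varphi(i)-1}\le t_i$, which is perfectly compatible with the sequential closure argument above: limits of admissible $t^m$ only need to satisfy the non-strict versions of these constraints, and indeed do. Hence the two inclusions combine to the claimed identity.
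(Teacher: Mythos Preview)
Your proposal is correct and follows essentially the same approach as the paper: both directions are handled exactly as you describe, using the boundedness of $Q_\varphi\subseteq P$ to extract a convergent subsequence and invoking Lemma~\ref{lem:rep} for ``$\subseteq$'', and using the continuity of $(t_1,\dots,t_\sigma)\mapsto u_{t_1,\dots,t_\sigma}$ for ``$\supseteq$''. Your closing remark about the strict inequalities is a helpful clarification that the paper leaves implicit.
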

\begin{proof}
  First, let $u\in\overline{V_\varphi}$ and consider a sequence~$\{u^m\}_{m\in \N}$ in~$V_\varphi$ with $u^m =u_{t^m_1,\dots,t^m_\sigma}\to u$ in $L^p(0,T)$, where~$t^m=(t^m_1,\dots,t^m_\sigma)\in Q_\varphi$.
  The strong convergence in $L^p$ implies that there is a subsequence, denoted by the same symbol for convenience, which converges pointwise 
	almost everywhere in $(0,T)$ to~$u$.
        Furthermore, as a polytope, $P$ is bounded by definition, so that~$Q_\varphi$ is bounded as well and thus
	there is yet another subsequence such that
	$t^m$ converges to $\bar t \in \overline{Q_\varphi}$. With Lemma~\ref{lem:rep}, we may conclude that $u_{\bar t_1,\dots,\bar t_\sigma}=u$ and, thanks to 
	$\bar t \in \overline{Q_\varphi}$, this finishes the proof of the first inclusion.
	
	For the reverse inclusion, consider $u_{t_1,\ldots,t_\sigma}$ with switching points~$t=(t_1,\ldots,t_\sigma)\in \overline{Q_\varphi}$. Since $t\in \overline{Q_\varphi}$, there exists a sequence $t^m=(t^m_1,\dots,t^m_\sigma)\in Q_\varphi$ with $t^m\to t$ in $\R^\sigma$. Again thanks to the continuity of the mapping $(t_1,\dots,t_\sigma)\mapsto u_{t_1,\ldots,t_\sigma}$~\cite[Lemma~3.10]{partI}, the sequence~$\{u_{t^m_1,\ldots,t^m_\sigma}\}_{m\in \N}\subseteq V_\varphi$ converges to~$u_{t_1,\ldots,t_\sigma}$ in~$L^p(0,T)$, so that the latter belongs to the closure of~$V_\varphi$ in~$L^p(0,T)$.
\end{proof}
Besides the fact that $C_{\overline{D(P)_\SP},\Pi}$ is a polytope, it is also crucial for our approach that there exists an efficient separation algorithm for this set. Indeed, for the special case
\[
\begin{aligned}
  D(s)_\SP:=\big\{ u_{t_1,\dots,t_\sigma}\colon &t_{i}-t_{i-1}\ge s ~ \forall \,i = 2, \dots,\sigma,~ t_1,\ldots,t_\sigma\geq 0,\\ &u_{t_1,\dots,t_\sigma}(\tau_j)=c_j\; \forall \,j = 1, \dots,N\big\}
\end{aligned}
\]
of dwell time constraints with fixings
$(\tau_j,c_j)\in[0,T)\times\{0,1\}$, $1\leq j\leq N$, the separation
  problem is polynomially solvable in the number $M$ of projection
  intervals, the number $\sigma$ of allowed switchings, and the number
  $N$ of fixings, as claimed in Theorem \ref{thm:dwell}. For the proof of the latter result,  it remains to show the following lemma, using the definition of~$S$ given in Section~\ref{sec:spc}.
\begin{lemma}
	Let~$v$ be a vertex of~$C_{\overline{D(s)_\SP},\Pi}$. Then there exists~$u\in \overline{D(s)_\SP}$ with~$\Pi(u)=v$ such that~$u$ switches only in~$S$.
\end{lemma}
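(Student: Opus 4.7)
The plan is to adapt the shifting argument of \cite[Lemma~3.13]{partI}, taking special care that perturbations preserve the fixings $u(\tau_j)=c_j$. Let $v$ be a vertex of $C_{\overline{D(s)_\SP},\Pi}$. The set of controls $u\in\overline{D(s)_\SP}$ with $\Pi(u)=v$ is nonempty, and I would select a representative $u$ with switching points $t_1\le\cdots\le t_\sigma$ whose number of switching points lying outside~$S$ is minimal over this set. The goal is to assume some $t_i\notin S$ and derive a contradiction either with the vertex property of $v$ or with the minimality of $u$.

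I would organize the switching points into \emph{blocks}, where a block is a maximal consecutive subsequence $\{t_j,\ldots,t_k\}$ with $t_{p+1}=t_p+s$ for $j\le p<k$. Since $S$ is closed under shifts by multiples of $s$ within $[0,T]$, all switching points in a block lie in $S$ together or not at all; in particular, no element of the block $B$ containing $t_i$ lies in $S$. The key step is to rigidly shift $B$ by $\pm\varepsilon$ to produce perturbed controls $u_\pm$. Feasibility in $\overline{D(s)_\SP}$ requires: (a) the dwell constraints with the neighbors $t_{j-1}$ and $t_{k+1}$ remain valid; (b) no $t_p\in B$ crosses a fixing point $\tau_m$, since otherwise $u(\tau_m)$ flips and the fixing $u(\tau_m)=c_m$ is violated; and (c) the shifted block stays within $[0,T]$. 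A strictly positive $\varepsilon_\pm>0$ always exists on both sides: a zero bound would force some $t_p$ to coincide with an element of $\{0,T\}\cup\{a_l,b_l\colon l=1,\ldots,M\}\cup\{\tau_m\colon m=1,\ldots,N\}\subset S$, or make the dwell constraint with a neighbor tight, contradicting maximality of $B$.

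By a routine symmetry calculation, the projection satisfies $\Pi(u)=\tfrac12(\Pi(u_+)+\Pi(u_-))$ for small $\varepsilon$. Since $v$ is a vertex, this forces $\Pi(u_+)=\Pi(u_-)=v$, and in turn implies that no $t_p\in B$ lies in the interior of any projection interval $I_l=(a_l,b_l)$: otherwise a linear change in the corresponding coordinate would give $\Pi(u_+)_l\neq\Pi(u_-)_l$. Combined with $a_l,b_l\in S$ and $t_p\notin S$, we obtain $t_p\notin\bigcup_l\overline{I_l}$ for every $p\in\{j,\ldots,k\}$. Hence $B$ may be shifted freely within the range allowed by~(a)--(c) without altering $\Pi(u)$. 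I would shift $B$ until one of these constraints becomes tight: if a dwell constraint tightens, $B$ enlarges by one and the argument is iterated (terminating after at most $\sigma$ steps); if~(b) or~(c) tightens, some $t_p$ lands at a point of $S$, and since the block members are equispaced by $s$, \emph{all} elements of the shifted block then lie in $S$, contradicting the minimality of $u$.

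The main obstacle, absent in the fixings-free setting of \cite[Lemma~3.13]{partI}, is the additional restriction~(b). The argument above shows that this obstacle is benign: whenever a fixing-induced constraint becomes tight during the free shift, the corresponding switching point lies exactly at some $\tau_m\in S$, which is precisely the outcome needed for the contradiction.
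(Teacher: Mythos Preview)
Your overall strategy matches the paper's: both adapt the shifting argument of \cite[Lemma~3.13]{partI} by tracking the additional constraint that no switching point may cross a fixing point~$\tau_m$. You differ in two respects that actually streamline the argument. First, you use the extreme-point (midpoint) characterization of a vertex, whereas the paper picks a linear objective~$c$ for which~$v$ is the unique minimizer. Second, you work directly with switching-point vectors of controls in~$\overline{D(s)_\SP}$ and argue feasibility of the perturbed controls in the closure, whereas the paper perturbs an approximating sequence in~$D(s)_\SP$ and then passes to limits by $BV$-compactness twice (once in the approximating index~$m$, once in the shift parameter~$\delta$). Your direct route avoids that double-limit machinery and is cleaner.

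There is, however, one gap. From $\Pi(u_+)=\Pi(u_-)=v$ you conclude that no~$t_p\in B$ lies in the interior of any~$I_l$, arguing that otherwise a linear change in the $l$-th coordinate would give $\Pi(u_+)_l\neq\Pi(u_-)_l$. This fails when an \emph{even} number of block elements lie in the same interval~$(a_l,b_l)$: since the switch directions alternate along a block, the individual linear contributions to~$\Pi_l$ cancel and the shift leaves~$\Pi_l$ unchanged. So from the midpoint identity you can only infer that each~$I_l$ contains an even (possibly positive) number of block elements. Consequently, when you then shift~$B$ freely subject to~(a)--(c), a block element may reach an interval endpoint~$a_l$ or~$b_l$ before any of~(a)--(c) tightens, and beyond that point~$\Pi$ would start to change, so the shifted control no longer projects to~$v$. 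The fix is simple: add ``some $t_p\in B$ reaches an endpoint~$a_l$ or~$b_l$'' to your list of stopping criteria for the free shift. Since~$a_l,b_l\in S$, this event places a shifted block element in~$S$, and by the equispacing-by-$s$ argument you already give, the entire shifted block then lies in~$S$, contradicting minimality exactly as in cases~(b) and~(c). With this amendment your proof is complete.
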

\begin{proof}
	Choose~$c\in\R^M$ such that~$v$ is the unique minimizer of~$c^\top
	v$ subject to~$v\in C_{\overline{D(s)_\SP},\Pi}$. Moreover, choose any~$u\in \overline{D(s)_\SP}$
	with~$\Pi(u)=v$ as well as a sequence $\{u^m\}_{m\in \N}\subset D(s)_\SP$ such that $u^m \to u$ in $L^p(0,T)$. Let~$t^m_1,\dots,t^m_\sigma$ be the switching points of~$u^m$ for $m\in \N$, i.e., let $0\le t^m_1\le\dots\le t^m_\sigma<\infty$ such that $u_{t^m_1,\dots,t^m_\sigma}=u^m$. Then there exists a subsequence of~$t^m := (t_1^m, \ldots, t_\sigma^m)$ that converges to some $t\in \R^\sigma$ with $0\leq t_1 \leq \cdots \leq t_\sigma <\infty$ and, thanks to Lemma~\ref{lem:rep}, we have~$u_{t_1,\ldots,t_\sigma}=u$.
		For the following, for~$j=1,\dots,\sigma$ and $m\in
        \N\cup\{\infty\}$, we define
	$$S^{m}_j:=\{t^m_\ell\colon
        \ell\in\{1,\dots,\sigma\},\ t^m_\ell-t^m_j=s(\ell-j)\}\;,$$
        where we set~$t^\infty:=t$. The set~$S_j^m$ thus contains all
        switching points in~$t^m$ that have the minimal possible
        distance to~$t^m_j$.
	
	Assume
	first that $t_j\in(a_i,b_i)\setminus S$ for
	some~$i\in\{1,\dots,M\}$ and some ~$j\in\{1,\dots,\sigma\}$. Due to~$t^m \to t$ in $\R^\sigma$, we deduce for $m$ sufficiently large that $t_j^m\in (t_j-\tfrac{\varepsilon}{2},t_j+\tfrac{\varepsilon}{2})$, where $\varepsilon>0$ is given by $\varepsilon:=\min_{q\in S}|t_j-q|>0$. Then~$t_j^m\not\in S$ and $S^{m}_j\cap S=\emptyset$ by
	definition of~$S$. Now all points in $S^{m}_j$ can be shifted by some
	$0<\delta<\tfrac{\varepsilon}{2}$, in both directions, maintaining
	feasibility with respect to~$D(s)_\SP$, since none of these points is shifted to one of the fixing points~$\tau_1,\ldots,\tau_N$.  Consequently, all points in $S^{\infty}_j$ can be slightly shifted simultaneously in both directions, maintaining
	feasibility with respect to~$\overline{D(s)_\SP}$ and without any of these points
	leaving or entering any of the intervals~$I_1,\dots,I_M$ or
	$[0,T]$. This shifting changes the value of~$c^\top\Pi(u)$
	linearly, compare \cite[Thm.~3.12]{partI}, which is
	a contradiction to unique optimality of~$v$.
	
	We have thus shown that all
	switching points of~$u$ are either in~$S$ or outside of any interval~$I_i$. Let~$t_j\not\in
	S$ be any switching point of~$u$ not belonging to any
	interval~$I_i$. Then, for sufficiently large $m$, we have $t_j^m\notin S$ and $t^m_j\notin I_i$ for any $i\in\{1,\ldots,M\}$. The idea is, as in the proof of~\cite[Lemma 3.13]{partI}, to shift the switching point $t_j^m\notin S$ for each $m$ to the next point on the
        left belonging to~$S$, but if this point belongs to $[0,T]\cap(\Z s+(\{\tau_j\colon
	j=1,\dots,N\}))$, we can only shift~$t^m_j$ arbitrarily close to the latter point in order to maintain feasibility in $D(s)_\SP$. For small enough $\delta >0$, we thus shift 
	all switching points in~$S^{m}_j$ simultaneously
	to the left until \begin{equation}\label{eq:shift}\operatorname{dist}(S^{m}_j,S):=\min_{p\in S^{m}_j, q\in S}|p-q|=\delta,\end{equation} taking into account that the set~$S^{m}_j$ may increase when~$t^m_j$ decreases. Consequently, for all~$\delta$, we obtain another
	sequence~$\{u_{\delta}^{m}\}_{m\in \N}$. By construction, no switching point is
	moved beyond the next point in~$S$ to the left of its original
	position and no switching point is moved to any of the fixing points $\tau_1,\ldots,\tau_N$, so that we conclude $u_{\delta}^{m}(\tau_j)=c_j$ for $j=1,\ldots,N$ and thus $u_{\delta}^{m}\in D(s)_\SP$. 
	In particular, none of the switching points being moved
	enters any of the intervals~$I_i$, so that we
	derive
	\begin{equation}\label{eq:Piuem}
	\Pi(u_{\delta}^{m})=\Pi(u^m)\to \Pi(u)=v\qquad \mbox{for } m\to \infty\end{equation} by the continuity of the projection $\Pi$. We know that $\{u_{\delta}^{m}\}_{m\in \N}$ is a bounded sequence
	in~$BV(0,T)$ and hence by~\cite[Thm.~10.1.3 and Thm.~10.1.4]{ATT14}  there exists a
	strongly convergent subsequence, which we again denote by
	$\{u_{\delta}^{m}\}_{m\in \N}$, such that $ u_{\delta}^{m} \to u_{\delta}\in
	\overline{D(s)_\SP} \text{ for } m\to \infty$.
        By \eqref{eq:Piuem} and the continuity of $\Pi$, we obtain $\Pi(u_{\delta})=v$ for~$\delta>0$. 
	Now $\{u_{\delta}\colon \delta>0\}\subset\overline{D(s)_\SP}$ is bounded in~$BV(0,T)$ as well, so that it contains an accumulation point~$u'\in\overline{D(s)_\SP}$ and, again by the continuity of the projections, we have $\Pi(u')=v$. Thanks to \eqref{eq:shift}, $u'$ then has at least one switching point more in $S$ than $u$, but still satisfies $\Pi(u')=v$. By repeatedly applying the same
 	modification, we eventually obtain a function projecting to~$v$ with all
 	switching points in~$S$.
\end{proof}

\fontsize{9}{10.5}\selectfont
\bibliographystyle{siam}
\bibliography{reference}

\begin{thebibliography}{10}

\bibitem{ATT14}
{\sc H.~Attouch, G.~Buttazzo, and G.~Michaille}, {\em Variational Analysis in
  Sobolev and {BV} Spaces}, SIAM, 2014.

\bibitem{AS08}
{\sc H.~Attouch and M.~Soueycatt}, {\em Augmented {L}agrangian and proximal
  alternating direction methods of multipliers in {H}ilbert spaces.
  applications to games, {PDE}'s and control}, Pacific J. Optim., 5 (2008),
  pp.~17--37.

\bibitem{BR01}
{\sc R.~Becker and R.~Rannacher}, {\em An optimal control approach to a
  posteriori error estimation in finite element methods}, Acta Numerica, 10
  (2001), pp.~1--102.

\bibitem{B93}
{\sc M.~Bergounioux}, {\em Augmented {L}agrangian method for distributed
  optimal control problems with state constraints}, J. Optim. Theory Appl., 78
  (1993), pp.~493--521.

\bibitem{BOY11}
{\sc S.~Boyd, N.~Parikh, E.~Chu, B.~Peleato, and J.~Eckstein}, {\em Distributed
  optimization and statistical learning via the alternating direction method of
  multipliers}, Foundations and Trends in Machine Learning, 3 (2011),
  pp.~1--122.

\bibitem{partI}
{\sc C.~Buchheim, A.~Gr\"utering, and C.~Meyer}, {\em Parabolic optimal control
  problems with combinatorial switching constraints -- {Part I}: Convex
  relaxations}, arXiv preprint arXiv:2203.07121,  (2022).

\bibitem{partII}
{\sc C.~Buchheim, A.~Gr\"utering, and C.~Meyer}, {\em Parabolic optimal control
  problems with combinatorial switching constraints -- {Part II}: Outer
  approximation algorithm}, arXiv preprint arXiv:2204.07008,  (2022).

\bibitem{buchheim23}
{\sc C.~Buchheim and M.~H{\"u}gging}, {\em The polytope of binary sequences
  with bounded variation}, Discrete Optim., 48 (2023), p.~100776.

\bibitem{CKK18}
{\sc C.~Clason, F.~Kruse, and K.~Kunisch}, {\em Total variation regularization
  of multi-material topology optimization}, ESAIM Math. Model. Numer. Anal., 52
  (2018), pp.~275--303.

\bibitem{CK14}
{\sc C.~Clason and K.~Kunisch}, {\em Multi-bang control of elliptic systems},
  Ann. Inst. H. Poincar\'{e} Anal. Non Lin\'{e}aire, 31 (2014), pp.~1109--1130.

\bibitem{CK16}
{\sc C.~Clason and K.~Kunisch}, {\em A convex analysis approach to
  multi-material topology optimization}, ESAIM Math. Model. Numer. Anal., 50
  (2016), pp.~1917--1936.

\bibitem{CTW18}
{\sc C.~Clason, C.~Tameling, and B.~Wirth}, {\em Vector-valued multibang
  control of differential equations}, SIAM J. Control Optim., 56 (2018),
  pp.~2295--2326.

\bibitem{EWA06}
{\sc M.~Egerstedt, Y.~Wardi, and H.~Axelsson}, {\em Transition-time
  optimization for switched-mode dynamical systems}, IEEE Trans. Autom.
  Control, 51 (2006), pp.~110--115.

\bibitem{Evans_PDE}
{\sc L.~C. Evans}, {\em Partial differential equations}, vol.~19 of Graduate
  Studies in Mathematics, American Mathematical Soc., 2010.

\bibitem{FMO13}
{\sc K.~Fla{\ss}kamp, T.~Murphey, and S.~Ober-Bl{\"o}baum}, {\em Discretized
  switching time optimization problems}, in European Control Conference (ECC),
  IEEE, 2013, pp.~3179--3184.

\bibitem{FG83}
{\sc M.~Fortin and R.~Glowinski}, {\em Chapter {III} on
  decomposition-coordination methods using an augmented lagrangian}, in SIAM,
  vol.~15, Elsevier, 1983, pp.~97--146.

\bibitem{GM76}
{\sc D.~Gabay and B.~Mercier}, {\em A dual algorithm for the solution of
  nonlinear variational problems via finite element approximation}, Comput.
  Math. with Appl., 2 (1976), pp.~17--40.

\bibitem{GGZ75}
{\sc H.~Gajewski, K.~Gr{\"o}ger, and K.~Zacharias}, {\em Nichtlineare
  {O}peratorgleichungen und {O}peratordifferentialgleichungen}, Mathematische
  Nachrichten, 67 (1975), pp.~iv--iv.

\bibitem{GER06}
{\sc M.~Gerdts}, {\em A variable time transformation method for mixed-integer
  optimal control problems}, Optim. Control Appl. Methods, 27 (2006),
  pp.~169--182.

\bibitem{Glo84}
{\sc R.~Glowinski}, {\em Numerical {Methods} for {Nonlinear} {Variational}
  {Problems}}, Springer Berlin Heidelberg, 1984.

\bibitem{GLT89}
{\sc R.~Glowinski and P.~Le~Tallec}, {\em Augmented {L}agrangian and
  operator-splitting methods in nonlinear mechanics}, Studies in Applied and
  Numerical Mathematics, SIAM, 1989.

\bibitem{GM75}
{\sc R.~Glowinski and A.~Marroco}, {\em Sur l'approximation, par
  {\'e}l{\'e}ments finis d'ordre un, et la r{\'e}solution, par
  p{\'e}nalisation-dualit{\'e} d'une classe de probl{\`e}mes de {D}irichlet non
  lin{\'e}aires}, Revue fran{\c{c}}aise d'automatique, informatique, recherche
  op{\'e}rationnelle. Analyse num{\'e}rique, 9 (1975), pp.~41--76.

\bibitem{GSY20}
{\sc R.~Glowinski, Y.~Song, and X.~Yuan}, {\em An {ADMM} numerical approach to
  linear parabolic state constrained optimal control problems}, Numerische
  Mathematik, 144 (2020), pp.~931--966.

\bibitem{GOS14}
{\sc T.~Goldstein, B.~O'Donoghue, S.~Setzer, and R.~Baraniuk}, {\em Fast
  alternating direction optimization methods}, SIAM J. Imaging Sc., 7 (2014),
  pp.~1588--1623.

\bibitem{GRIS85}
{\sc P.~Grisvard}, {\em Elliptic Problems in Nonsmooth Domains}, Classics in
  Applied Mathematics, SIAM, Philadelphia, 1985.

\bibitem{JM11}
{\sc E.~R. Johnson and T.~D. Murphey}, {\em Second-order switching time
  optimization for nonlinear time-varying dynamic systems}, IEEE Trans. Autom.
  Control, 56 (2011), pp.~1953--1957.

\bibitem{JRS15}
{\sc M.~N. Jung, G.~Reinelt, and S.~Sager}, {\em The {L}agrangian relaxation
  for the combinatorial integral approximation problem}, Optim. Methods
  Software, 30 (2015), pp.~54--80.

\bibitem{KW18}
{\sc V.~Karl and D.~Wachsmuth}, {\em An augmented {L}agrange method for
  elliptic state constrained optimal control problems}, Comput. Optim. Appl.,
  69 (2018), pp.~857--880.

\bibitem{KLM20}
{\sc C.~Kirches, F.~Lenders, and P.~Manns}, {\em Approximation properties and
  tight bounds for constrained mixed-integer optimal control}, SIAM J. Control
  Optim., 58 (2020), pp.~1371--1402.

\bibitem{MV07}
{\sc D.~Meidner and B.~Vexler}, {\em Adaptive space-time finite element methods
  for parabolic optimization problems}, SIAM J. Control Optim., 46 (2007),
  pp.~116--142.

\bibitem{ROL17}
{\sc M.~Ringkamp, S.~Ober-Bl{\"o}baum, and S.~Leyendecker}, {\em On the time
  transformation of mixed integer optimal control problems using a consistent
  fixed integer control function}, Math. Program., 161 (2017), pp.~551--581.

\bibitem{HR16}
{\sc F.~R{\"u}ffler and F.~M. Hante}, {\em Optimal switching for hybrid
  semilinear evolutions}, Nonlinear Anal. Hybrid Syst., 22 (2016),
  pp.~215--227.

\bibitem{SA05}
{\sc S.~Sager}, {\em Numerical methods for mixed-integer optimal control
  problems}, Der Andere Verlag T{\"o}nning, 2005.

\bibitem{SA12}
{\sc S.~Sager, H.~G. Bock, and M.~Diehl}, {\em The integer approximation error
  in mixed-integer optimal control}, Math. Program., 133 (2012), pp.~1--23.

\bibitem{KJS11}
{\sc S.~Sager, M.~Jung, and C.~Kirches}, {\em Combinatorial integral
  approximation}, Math. Methods Oper. Res., 73 (2011), p.~363.

\bibitem{ZS20}
{\sc S.~Sager and C.~Zeile}, {\em On mixed-integer optimal control with
  constrained total variation of the integer control}, Comp. Optim. Appl.,
  (2020), pp.~1--49.

\bibitem{SAN21}
{\sc O.~Sander}, {\em DUNE—The Distributed and Unified Numerics Environment},
  vol.~140, Springer Nature, 2021.

\bibitem{SOG17}
{\sc B.~Stellato, S.~Ober-Bl{\"o}baum, and P.~J. Goulart}, {\em Second-order
  switching time optimization for switched dynamical systems}, IEEE Trans.
  Autom. Control, 62 (2017), pp.~5407--5414.

\bibitem{Troe10}
{\sc F.~Tr\"oltzsch}, {\em Optimal Control of Partial Differential Equations:
  Theory, Methods, and Applications}, vol.~112 of Graduate Studies in
  Mathematics, American Mathematical Soc., 2010.

\bibitem{VW08}
{\sc B.~Vexler and W.~Wollner}, {\em Adaptive finite elements for elliptic
  optimization problems with control constraints}, SIAM J. Control Optim., 47
  (2008), pp.~509--534.

\bibitem{Wac19}
{\sc D.~Wachsmuth}, {\em Iterative hard-thresholding applied to optimal control
  problems with {$L^0(\Omega)$} control cost}, SIAM J. Control Optim., 57
  (2019), pp.~854--879.

\bibitem{Z90}
{\sc E.~Zeidler}, {\em Nonlinear Functional Analysis and Its Applications
  {II/A}: Linear Monotone Operators}, Springer, 1990.

\bibitem{ZRS20}
{\sc C.~Zeile, N.~Robuschi, and S.~Sager}, {\em Mixed-integer optimal control
  under minimum dwell time constraints}, Math. Program.,  (2020), pp.~1--42.

\end{thebibliography}
\end{document}